\newcommand{\eqlabel}[1]{\refstepcounter{equation}\textup{\tagform@{\theequation}}\label{#1}}
\newcommand{\N}{\mathbb{N}}
\newcommand{\Z}{\mathbb{Z}}
\newcommand{\R}{\mathbb{R}}
\newcommand{\de}{\partial}
\newcommand{\mz}{\frac{1}{2}}
\newcommand{\ang}[1]{\langle#1\rangle}
\newcommand{\uno}{\bm{1}}
\newcommand{\nin}{\not\in}
\newcommand{\mrestr}{\mathbin{\vrule height 1.6ex depth 0pt width
0.13ex\vrule height 0.13ex depth 0pt width 1.3ex}}
\renewcommand{\bar}{\overline}
\renewcommand{\phi}{\varphi}
\theoremstyle{definition}
\newtheorem{definition}{Definition}[section]
\newtheorem{rmk}[definition]{Remark}
\newtheorem*{definition*}{Definition}
\newtheorem*{notazen*}{Notation}
\newtheorem*{rmk*}{Remark}
\newtheorem*{example*}{Example}
\newtheorem*{ack*}{Acknowledgement}
\newtheorem*{acks*}{Acknowledgements}
\newtheorem{thm}[definition]{Theorem}
\newtheorem{lemmaen}[definition]{Lemma}
\newtheorem{corollary}[definition]{Corollary}
\newtheorem{proposition}[definition]{Proposition}
\newtheorem{conj}[definition]{Question}
\newtheorem*{thm*}{Theorem}
\newtheorem*{lemmaen*}{Lemma}
\newtheorem*{corollary*}{Corollary}
\newtheorem*{proposition*}{Proposition}
\newtheorem*{claim*}{Claim}
\newtheorem*{conj*}{Conjecture}
\DeclareMathOperator{\dive}{div}
\newcommand{\reg}{\mathcal{R}}
\newcommand{\reh}{\mathcal{S}}
\newcommand{\str}{\mathcal{U}}
\newcommand{\hau}{\mathcal{H}}
\newcommand{\A}{\mathcal{A}}
\renewcommand{\epsilon}{\varepsilon}
\renewcommand{\hat}{\widehat}
\newcommand{\weakto}{\rightharpoonup}
\newcommand{\gr}{\text{Gr}_k(\R^n)}
\begin{document}

\title[Michael--Simon inequality for anisotropic energies]{Michael--Simon inequality \\ for anisotropic energies close to the area \\ via multilinear Kakeya-type bounds}
\author[G. De Philippis]{Guido De Philippis}
\address{Department of Mathematics, University of Padova, Via Trieste 63, 35121 Padova, Italy.}
\email{guido.dephilippis@math.unipd.it}
\author[A. Pigati]{Alessandro Pigati}
\address{Department of Decision Sciences, Bocconi University, Via Roentgen 1, 20136 Milano, Italy.}
\email{alessandro.pigati@unibocconi.it}


\begin{abstract}
Given an anisotropic integrand $F:\gr\to(0,\infty)$, we can generalize the classical isotropic area by looking at the functional
$$\mathcal{F}(\Sigma^k):=\int_\Sigma F(T_x\Sigma)\,d\mathcal{H}^k.$$
While a monotonicity formula is not available for critical points \cite{All.no.mono},
when $k=2$ and $n=3$ we show that the Michael--Simon inequality holds
if $F$ is close to $1$ (in $C^1$), meaning that $\mathcal{F}$ is close to the usual area.

Our argument is partly based on some key ideas of Almgren, who proved this
result in an unpublished manuscript, but we largely simplify his original proof
by showing a new functional inequality for vector fields on the plane,
which can be seen as a quantitative version of Alberti's rank-one theorem.

As another byproduct, we also show Michael--Simon for another class of integrands
which includes the $\ell^p$ norms for $p\in(1,\infty)$.
For a general $F$ satisfying the atomic condition \cite{DDG}, we also show that
the validity of Michael--Simon is equivalent to compactness of rectifiable varifolds.
\end{abstract}

\raggedbottom
\maketitle

\section{Introduction}

\subsection{Setting and main result}
Geometric measure theory and, more broadly, a large part of calculus of variations and geometric analysis
deal with the classical isotropic area. The study of its critical points $\Sigma^k\subset\R^n$ makes extensive use of the \emph{monotonicity formula}, asserting that for all \(p\in\R^n\) we have
$$\frac{\mathcal{H}^k(\Sigma\cap B_r(p))}{r^k}\le\frac{\mathcal{H}^k(\Sigma\cap B_s(p))}{s^k}\quad\text{for }0<r<s.$$
 This fact has a number of useful consequences, best phrased in terms of varifolds (see, e.g., \cite[Chapters 4 and 8]{Simon}): among the fundamental ones, existence and
upper semicontinuity of the density for stationary varifolds, upper semicontinuity of their support
under varifold convergence,
compactness of rectifiable and integral varifolds (either stationary or with local uniform bounds on the first variation), and existence and conical symmetry of blow-ups, the latter following
from a more precise version of monotonicity.

There is a natural anisotropic generalization of the area functional, given by
taking an integrand $F:\gr\to(0,\infty)$ of class $C^1$ and defining
$$\mathcal{F}(\Sigma):=\int_\Sigma F(T_x\Sigma)\,d\mathcal{H}^k(x)$$
for a smoothly embedded $\Sigma^k\subset\R^n$, extending the definition to $k$-varifolds in the obvious way.
In the anisotropic setting, it is known \cite{All.no.mono} that monotonicity fails
(in the sense that if $\mathcal{F}$ satisfies an identity resembling too closely
the quantitative version of monotonicity, then $\mathcal{F}$ is the isotropic area up to a linear change of coordinates).
Thus, many basic tools break down at this level of generality.

However, a weaker (though arguably more robust) fact is believed to hold
for appropriate classes of integrands $F$.
Specifically, for $n\ge3$ and $k\in \{2,\dots,n-1\}$, the \emph{Michael--Simon inequality}
(first proved in \cite{MS} for the isotropic area, as a consequence of monotonicity)
is conjectured to hold for appropriate $F$, leading to the following question.

\begin{conj}\label{ms.conj}
    For which $F$ does it hold that,
    given a rectifiable $k$-varifold $V$ in $\R^n$ with $\Theta^k(|V|,x)\ge\theta_0>0$ for $|V|$-a.e.\ $x$, as well as finite total mass and first variation, i.e.,
    $|V|(\R^n),|\delta^FV|(\R^n)<\infty$, we have
    \begin{equation}\label{ms}|V|(\R^n)^{k-1}\le \frac{C(n,k,F)}{\theta_0}|\delta^FV|(\R^n)^k\text{ ?}\end{equation}
\end{conj}

While the range of applications of this bound would be far more limited compared to a monotonicity formula (e.g., it would not give upper density bounds or conicality of blow-ups),
it would still have a number of key consequences, such as compactness of rectifiable and integral varifolds and, when the first variation is in $L^p$ with $p>k$, a lower density bound of the form
$$|V|(B_r(p))\ge cr^k\quad\text{for }p\in\operatorname{spt}(|V|),\ r<1$$
(see \cref{lp.lb} below) and upper semicontinuity of the support along converging sequences of varifolds.

\begin{rmk}
It is clear that, by scaling and normalization of density, \cref{ms} is equivalent to its validity
when $\theta_0=1$ and $|V|(\R^n)=1$. Moreover, it is equivalent to the \emph{functional} version of Michael--Simon (see \cref{functional} below):
for any $f\in C^1_c(\R^n)$ we have
\begin{equation}\label{ms.f}
    \Big[\int_{\R^n}|f|^{k/(k-1)}\,d|V|\Big]^{(k-1)/k}\le \frac{C'(n,k,F)}{\theta_0^{1/k}}
\Big[\int_{\R^n}|df|\,d|V|+\int_{\R^n}|f|\,d|\delta^FV|\Big].
\end{equation}
\end{rmk}

In codimension one (when $k=n-1$), conjecturally the appropriate assumption should be \emph{strict convexity} of $F$, by virtue of the fact that it is equivalent to the atomic condition (AC), and in turn to have rectifiability under the assumption $\Theta^k(|V|,\cdot)>0$ a.e.\ \cite{DDG}.
More precisely,
once we identify $\text{Gr}_{n-1}(\R^n)$ with $\mathbb{RP}^{n-1}$ (i.e., a hyperplane with its
unit normal $\pm\nu$) and $F:\text{Gr}_{n-1}(\R^n)\to(0,\infty)$ with an even function
$F:\mathbb{S}^{n-1}\to(0,\infty)$, we extend the latter to a 1-homogeneous function $F:\R^n\to[0,\infty)$,
which is then required to be convex, and actually strictly convex along lines not passing through $0$.

One of the main results of the present paper is to answer \cref{ms.conj} affirmatively
for surfaces in $\R^3$, when $F$ is convex and close enough to the area. This result was first
proved in an unpublished manuscript of Almgren, from which we borrowed several key ideas,
although we bypass a number of technical steps from his proof by leveraging some new functional inequalities presented below. 

\begin{thm}\label{main}
    If $n=3$, $k=2$, and $F$ is a convex integrand close enough to the isotropic area in the $C^1$ topology (i.e., $\|F|_{\mathbb{S}^{2}}-1\|_{C^1}$ is small enough), then the following holds.
    Given a rectifiable $2$-varifold $V$ in $\R^3$ with finite total mass and first variation,
    letting $\theta(x):=\Theta_*^2(|V|,x)$, we have the scale-invariant bound
	\begin{align}\label{ms.stronger}
		&|V|(\R^3)\le C(F)\mathcal{H}^2(\{\theta>0\})^{1/2}\cdot|\delta^F V|(\R^3).
	\end{align}
	In particular, if $\theta\ge\theta_0$ $|V|$-a.e., \cref{ms} holds for a possibly different constant $C(F)$.
\end{thm}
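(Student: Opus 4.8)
The plan is to establish the scale-invariant estimate \cref{ms.stronger}, which by the reduction in the Remark yields \cref{ms} (and is equivalent to the functional form \cref{ms.f}). Since both sides are scale-invariant, we may freely rescale; using convexity of $F$ --- which provides lower semicontinuity of $V\mapsto|\delta^F V|$ and, more importantly, a workable approximation/compactness theory --- together with the structure theory of rectifiable varifolds, I would first reduce to varifolds of a simple type (finite unions of $C^1$ or Lipschitz graphs with multiplicity one), so that the geometric quantities below are classical. The overall strategy is then to prove a \emph{local} version of the estimate --- a lower density bound, or a Michael--Simon inequality on a ball --- and to globalize it via a Vitali-type covering argument exactly as in the classical proof, with the monotonicity-based density lower bound replaced by the local estimate.

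The real obstruction --- the reason this is a genuine theorem and not a perturbation --- must be confronted at this point. In codimension one, writing $\nu$ for the (approximate) unit normal and $F$ for the $1$-homogeneous convex extension, the anisotropic first variation has the form $\delta^F V(X)=\int\langle A_F(\nu),DX\rangle\,d|V|$ with $A_F(\nu)=F(\nu)(\mathrm{Id}-\nu\otimes\nu)$ plus a term linear in $DF(\nu)$. The correction relative to the isotropic operator $X\mapsto\int\mathrm{div}_\Sigma X\,d|V|$ pairs $DF(\nu)$ against the tangential derivative of $\nu$, i.e.\ against the second fundamental form, which is a priori entirely uncontrolled; hence smallness of $|\delta^F V|$ does \emph{not} imply smallness of the isotropic $|\delta V|$, and no soft comparison argument can work. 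Following Almgren, I would test against vector fields adapted to a foliation of $\R^3$ by parallel planes, thereby transferring the problem to the two-dimensional slices.

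The core is then a dimension reduction. Along the slicing --- equivalently, after blowing up at a suitable point and passing to a limit invariant in one direction --- the surface is recorded by a planar object whose geometry is encoded by a vector field $w$ on a domain of $\R^2$ (morally the projected conormal, or the ``tilt'' of the tangent planes), and the crucial point is that the distributional divergence of $w$ is exactly the trace detected by the anisotropic first variation, up to error terms that vanish with $\|F|_{\mathbb{S}^2}-1\|_{C^1}$ and are absorbed using convexity. On this planar object I would invoke the new functional inequality --- a quantitative version of Alberti's rank-one theorem --- bounding the portion of $Dw$ lying far from the rank-one cone (precisely the part that obstructs the blow-up from being a piece of a flat plane) by $|\mathrm{div}\,w|$ plus a boundary/normalization term, with a constant independent of the domain. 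Since along the relevant sequence the divergence term tends to $0$ while unit mass carried on a set of bounded $\mathcal{H}^2$-measure keeps the left-hand side bounded below, one reaches a contradiction; tracking the exponents through the one-dimensional slicing is what produces the square-root power of $\mathcal{H}^2(\{\theta>0\})$ in \cref{ms.stronger}.

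Assembling the local consequences of the planar estimate through the covering argument described above then gives \cref{ms.stronger}, hence \cref{ms}. I expect the two main obstacles to be: (i) proving the planar functional inequality with a domain-independent constant --- the one genuinely new analytic input, and the step that replaces a large part of Almgren's original analysis; and (ii) making the slicing/blow-up rigorous for arbitrary rectifiable varifolds, in particular verifying that the divergence appearing in the planar problem is honestly controlled by $|\delta^F V|$ with every anisotropic error term absorbed through the $C^1$-closeness and convexity of $F$, and avoiding circularity with the very compactness one is ultimately trying to establish. The covering step and the reductions of the first paragraph should be comparatively routine.
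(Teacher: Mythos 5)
Your outline gestures at some of the right ingredients (passing to a planar object whose divergence is controlled by $|\delta^F V|$, invoking the new planar inequality, using $C^1$-closeness), but the mechanism you describe is not one that closes, and the genuinely load-bearing steps are missing. The paper's proof is a \emph{direct, global} projection argument, not a blow-up/slicing/compactness scheme with a Vitali covering: one tests $\delta^F V$ against $\psi(x,y)\,\de_x$ and $\psi(x,y)\,\de_y$, obtaining a $2\times2$ matrix of measures $\A$ on $\R^2$ (the pushforward under $\Pi$ of $B_F(\nu)\,V$) whose rows $S,T$ have $\lvert\dive S\rvert(\R^2),\lvert\dive T\rvert(\R^2)\le|\delta^FV|(\R^3)$, and then applies \cref{main.tw} with $\chi=\uno_E$, $E=\pi_{x,y}(\{\theta>0\})$. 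The factor $\mathcal{H}^2(\{\theta>0\})^{1/2}$ is simply $\|\chi\|_{L^2}$ in \cref{kak.bis}; it does not come from ``tracking exponents through one-dimensional slicing.'' Your reduction to unions of graphs, the local estimate plus covering, and the contradiction-by-compactness step are not needed, and the last two are dangerously close to circular: lower density bounds and compactness of rectifiable varifolds are \emph{consequences} of \cref{ms} here (\cref{usc}, \cref{equiv}), not available inputs.

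Two concrete gaps. First, \cref{main.tw} requires the sign conditions $S^x,T^y\ge0$, and nothing in your plan produces them: for a general convex $F$ the matrix $\A$ from \cref{A}--\cref{CD} need not have nonnegative diagonal. The paper needs \cref{coord}: a linear change of coordinates, built from a John-type maximal-volume argument on $K=\{F\le1\}$ (this is where convexity is really used), which simultaneously gives $\A_x^x,\A_y^y\ge0$ and, after a permutation, a distinguished direction with $\int(\nu^z)^2\,dV\ge\frac13|V|(\R^3)$. Without this normalization the planar inequality cannot be invoked at all. Second, your treatment of the anisotropic errors as ``terms that vanish with $\|F|_{\mathbb{S}^2}-1\|_{C^1}$ and are absorbed using convexity'' misses the actual quantitative structure: the negative parts $S^N+T^N$ in \cref{kak.bis} do \emph{not} vanish even for $F\equiv$ area; they are bounded by $\frac{(\nu^x)^2+(\nu^y)^2}{\gamma}$ with a specific $\gamma>4$ (the elementary lemma), and the proof closes only because $\frac13-(\frac14+\frac1\gamma)\frac23>0$ together with the one-third bound \cref{one.third}; the $C^1$-closeness only buys an additional $\epsilon$. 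Moreover, the scaling normalization $\mathcal{H}^2(\{\theta>0\})=\eta_0$ small, which lets the $\|\chi\|_{L^2}|V|(\R^3)$ term on the right be absorbed into the left-hand side, is essential and absent from your plan. As written, your argument would not produce \cref{ms.stronger}.
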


Note that the second conclusion follows from the first one, since we can bound the measure $\mathcal{H}^2(\{\theta>0\})=\mathcal{H}^2(\{\theta\ge\theta_0\})$ from above by $\theta_0^{-1}|V|(\R^3)$.
It should be noted that the proof is not perturbative: in fact, it proceeds by
singling out an explicit set of integrands $F$ (open in the $C^1$ topology for $F|_{\mathbb{S}^{2}}$)
which happens to contain the isotropic area. 
This result 
immediately implies the first assumption in the regularity lemma from \cite[p.\ 25]{All.const}.
Hence, we can apply Allard's regularity theorem in the anisotropic setting \cite[p.\ 27]{All.const}
and obtain the following.

\begin{thm}
    If $n=3$, $k=2$, and $F$ is a convex smooth integrand close enough to the isotropic area in the $C^1$ topology,
    then the following holds for some universal $\epsilon>0$. Given an integer-rectifiable $2$-varifold $V$ in $B_{2r}(x_0)\subset\R^3$ 
    and $x_0\in\operatorname{spt}(|V|)$,
    if for some $\nu_0\in\mathbb{S}^2$ we have
    $$\frac{|V|(B_{2r}(x_0))}{\pi(2r)^2}\in\Big[\frac12,\frac32\Big],$$
    as well as
    $$|\delta^FV|\le\Lambda|V|$$
    for some $\Lambda\in(0,\varepsilon/r)$, and
    $$\int_{B_{2r}(x_0)}\ang{x-x_0,\nu_0}^2\,d|V|(x)\le\epsilon r^4,$$
    then in the ball $B_r(x_0)$ the varifold $V$ agrees with a $C^{1,\alpha}$ graph of multiplicity $1$
    over the plane $\nu_0^\perp$,
    for any $\alpha\in(0,1)$ (with a scale-invariant $C^{1,\alpha}$ bound, vanishing as $\epsilon\to0$).
\end{thm}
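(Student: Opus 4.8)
The plan is to obtain this as a corollary of \cref{main} combined with Almgren's anisotropic version of Allard's $\epsilon$-regularity theorem \cite[p.\ 27]{All.const}. The latter is not a perturbative statement about the area alone: it applies to every convex, $C^1$-close, smooth integrand, but it carries as a standing hypothesis the ``first assumption'' of the regularity lemma on \cite[p.\ 25]{All.const}, namely a uniform lower mass bound for varifolds with controlled anisotropic first variation --- concretely, the existence of $c=c(F)>0$ such that any rectifiable $2$-varifold $W$ in a ball $B_\rho(y)$ with $y\in\operatorname{spt}(|W|)$ and $|\delta^FW|(B_\rho(y))\le\rho^{-1}|W|(B_\rho(y))$ satisfies $|W|(B_\rho(y))\ge c\rho^2$. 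So the first step is to observe that this is precisely the lower density bound of \cref{lp.lb}, read locally (via the functional form \cref{ms.f} with compactly supported test functions) and used with $p=\infty$ --- which is admissible, since $|\delta^FV|\le\Lambda|V|$ gives $|\delta^FV|\in L^\infty(|V|)$ --- and that this bound is itself a consequence of the Michael--Simon inequality \cref{ms} furnished by \cref{main}. Hence all the structural hypotheses of \cite[p.\ 27]{All.const} on $F$ are in force.

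Next I would check that the remaining, local hypotheses of the statement are exactly those needed to launch Almgren's argument. Since $F$ is $C^1$-close to $1$, the $F$-mass of a flat $2$-disk of radius $s$ equals $\pi s^2(1+O(\|F|_{\mathbb{S}^2}-1\|_{C^0}))$, and for any varifold $\mathcal F(\cdot)$ and $|\cdot|$ agree up to the same multiplicative error; thus the mass-ratio hypothesis $|V|(B_{2r}(x_0))/(\pi(2r)^2)\in[\tfrac12,\tfrac32]$ becomes the corresponding two-sided bound on the $F$-mass ratio, whose upper value, being $<2$, forces multiplicity $1$ once we also use integer-rectifiability of $V$, while its lower value together with $x_0\in\operatorname{spt}(|V|)$ and the lower density bound of Step~1 gives the nondegeneracy required for the Lipschitz approximation. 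In codimension one $\ang{x-x_0,\nu_0}^2=\operatorname{dist}(x,x_0+\nu_0^\perp)^2$, so the hypothesis $\int_{B_{2r}(x_0)}\ang{x-x_0,\nu_0}^2\,d|V|\le\epsilon r^4$ is exactly the (scale-invariantly normalized) height excess of $V$ in $B_{2r}(x_0)$ over the plane $x_0+\nu_0^\perp$; using $|\delta^FV|\le\Lambda|V|$ with $\Lambda r<\epsilon$, the anisotropic Caccioppoli inequality of \cite{All.const} upgrades this to smallness of the tilt excess $\int_{B_r(x_0)}|T_xV-\nu_0^\perp|^2\,d|V|$, where tangent planes are identified with orthogonal projections. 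Feeding this into the tilt-excess decay of \cite[p.\ 27]{All.const} and running the graph-closure argument yields that $V\cap B_r(x_0)$ is the graph over $\nu_0^\perp$ of a $C^{1}$ function $u$ with $C^1$ norm controlled by a positive power of $\epsilon$; since $u$ then solves the smooth anisotropic minimal-surface system $\operatorname{div}(a(Du))=H$ with $\|H\|_{L^\infty}\le\Lambda<\epsilon/r$, the $W^{2,p}$ theory for every $p<\infty$ (already part of \cite[p.\ 27]{All.const}) promotes this to a scale-invariant $C^{1,\alpha}$ bound for every $\alpha\in(0,1)$, vanishing as $\epsilon\to0$.

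The one genuinely delicate point --- the main obstacle --- is the bookkeeping across the two sources: one must verify that \cref{lp.lb} is literally the ``first assumption'' of \cite[p.\ 25]{All.const} (modulo the precise normalization of the first-variation bound) and that the quantitative constants --- the density $c(F)$, the regularity threshold $\epsilon$, and the admissible window for the mass ratio --- are mutually compatible, so that $\epsilon$ can be fixed depending only on $F$. Beyond this, no new analytic input is required: the Lipschitz approximation, the Caccioppoli inequality, the tilt-excess decay, and the final bootstrap are all contained in Almgren's proof in \cite{All.const}, which singles out the mass lower bound as its sole non-perturbative ingredient --- precisely the one supplied by \cref{main}.
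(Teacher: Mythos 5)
Your proposal follows exactly the route the paper takes: the Michael--Simon inequality of \cref{main} yields the lower mass/density bound (as in \cref{lp.lb}, with $p=\infty$ admissible since $|\delta^FV|\le\Lambda|V|$), which is precisely the first assumption of the regularity lemma on \cite[p.\ 25]{All.const}, and then the conclusion is read off from Allard's anisotropic regularity theorem \cite[p.\ 27]{All.const}. The paper gives no further argument beyond this observation, so your additional bookkeeping of Allard's hypotheses is fine but not a departure from the paper's proof.
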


Moreover, for the same dimensions, using similar ideas (but a simpler tool, namely \cref{det.simple.intro}
in place of \cref{main.tw.intro}), we also obtain the following. Note that
the technical condition in its statement holds for any $\ell^p$ norm with $p\in(1,\infty)$,
namely if
$$F(\nu)=(|\nu^x|^p+|\nu^y|^p+|\nu^z|^p)^{1/p}.$$

\begin{thm}\label{main.tris}
    Assume that $n=3$, $k=2$, and $F:\R^3\to[0,\infty)$ is a strictly convex integrand and even in each component.
    Moreover, for any coordinate plane $P$, assume that
    $\frac{\pi_P(\nabla F(\nu))}{|\pi_P(\nabla F(\nu))|}$ depends only on $\frac{\pi_P(\nu)}{|\pi_P(\nu)|}$,
    for all $\nu\in\mathbb{S}^2\setminus\pi_P^{-1}(0)$ (note that
    $\pi_P(\nabla F(\nu))\neq0$ for such $\nu$).
    Then the bound \cref{ms.stronger} holds true.
\end{thm}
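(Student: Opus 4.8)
The plan is to follow the same blueprint as for \cref{main}, replacing the multilinear Kakeya-type input (\cref{main.tw.intro}) with the simpler determinant inequality \cref{det.simple.intro}. By the remark after \cref{ms.conj}, it suffices to establish the functional form \cref{ms.f}, and after scaling we may normalize so that $|V|(\R^3)=1$, with $\theta=\Theta_*^2(|V|,\cdot)$. The first step is purely structural: since $F$ is even in each component, the three coordinate hyperplanes are reflection symmetries of $F$, and the hypothesis on $\pi_P(\nabla F)$ says that, after projecting the anisotropic ``Cahn--Hoffman'' normal $\nabla F(\nu)$ onto each coordinate plane, its direction is a function of the direction of $\pi_P(\nu)$ alone. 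The point of this is that it lets one build a good family of calibration-type vector fields on $\R^3$ adapted to $V$: for each coordinate plane $P$ one produces a vector field $X_P$ whose behavior in the $P$-directions is controlled by a two-dimensional vector field on $P$, reducing the anisotropic first-variation term to planar quantities.

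The second step is the heart of the argument and runs parallel to Almgren's scheme as streamlined in this paper. One covers (most of) the mass of $V$ by a controlled number of ``slabs'' or graphical pieces over the three coordinate planes, using a standard decomposition of a rectifiable varifold in $\R^3$ into three parts according to which coordinate plane the approximate tangent is closest to; on each piece one applies the coarea/slicing formula to write $|V|$ in terms of integral geometric (one-dimensional) slices. Then one invokes \cref{det.simple.intro}: the crucial algebraic fact is that the product of the three coordinate-plane projections of a $2$-plane's area form is comparable to $1$ (a Loomis--Whitney / arithmetic–geometric type bound), and the convexity plus the structural hypothesis on $F$ upgrade this to a bound where the anisotropic energy densities $F(T_x\Sigma)$ appear with the correct weights, so that the product over the three coordinate directions of the ``one-dimensional anisotropic masses'' dominates $|V|$. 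Combining the planar estimates, integrating, and using Hölder's inequality in the three-factor product (this is exactly where the exponent $k/(k-1)=2$ enters) yields \cref{ms.f}, hence \cref{ms.stronger}.

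The main obstacle I expect is the second step, specifically making the ``three coordinate-plane'' decomposition quantitative and error-controlled: one needs strict convexity of $F$ to ensure that the atomic condition holds (so that the varifold is well-behaved and one can integrate by parts against the constructed vector fields), and one needs the structural hypothesis to guarantee that the planar reductions are consistent across the three projections — i.e., that the vector field $X_P$ one wants to integrate against genuinely comes from a two-dimensional field on $P$ without creating uncontrolled boundary or transversal contributions. The analytic subtlety is that, unlike in \cref{main}, here we do not have a quantitative Alberti rank-one statement to absorb the ``bad direction'' of the gradient; instead one must check directly that \cref{det.simple.intro} is strong enough to close the estimate, which forces the somewhat rigid hypothesis on $\pi_P(\nabla F)$. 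Once these geometric bookkeeping issues are handled, the remaining manipulations (coarea, Hölder, summing the finitely many pieces) are routine and mirror the isotropic Michael--Simon proof via the Loomis--Whitney inequality.
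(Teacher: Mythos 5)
There is a genuine gap: your proposal never addresses the one point on which the whole argument turns, namely how to verify the hypothesis $\det(S,T)\ge 0$ needed to invoke \cref{det.simple.intro}. The paper's proof does not decompose $V$ into graphical pieces over the three coordinate planes, nor does it use coarea/slicing or a Loomis--Whitney three-factor H\"older argument. Instead it works with a \emph{single} projection $\Pi$ onto one coordinate plane and with the same planar matrix-valued measure $\A$ (rows $S,T$) built from the first variation as in the proof of \cref{main}: evenness in each variable plus convexity give $\nu^i\de_iF(\nu)\ge0$, hence $S^x,T^y\ge0$; the structural hypothesis on $\pi_P(\nabla F)$ is shown to be equivalent to the pairwise monotonicity condition \cref{rot.mono}, and after disintegrating $V$ over $\Pi$ and symmetrizing the resulting double integral in $(\nu,\tilde\nu)$ one gets not only $\det(\A)\ge0$ but the quantitative lower bound $\sqrt{\det(\A)}\ge\Pi_*[\nu^z\de_zF(\nu)\,V]$. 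By Euler's relation $\sum_i\nu^i\de_iF(\nu)=F(\nu)$ and a permutation of coordinates, the right-hand side is at least $c\,|V|(\R^3)$, and then \cref{det.simple.cor} applied with $\chi=\uno_E$, $E=\pi_{x,y}(\{\theta>0\})$, gives $c|V|(\R^3)\le C\,\mathcal{H}^2(\{\theta>0\})^{1/2}|\delta^FV|(\R^3)$, which is exactly \cref{ms.stronger}. Your proposal only says that "convexity plus the structural hypothesis upgrade this," without producing the sign of the determinant or any quantitative lower bound on it; since \cref{det.simple.intro} is vacuous without $\det\ge0$, this is the missing idea, not a bookkeeping issue.

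Two further problems. First, reducing to the functional form \cref{ms.f} is the wrong direction: the theorem asserts the stronger scale-invariant bound \cref{ms.stronger} involving $\mathcal{H}^2(\{\theta>0\})^{1/2}$, which implies \cref{ms} (hence \cref{ms.f}) but is not implied by it, so your normalization step loses the statement to be proved. Second, the proposed decomposition of a rectifiable varifold with merely finite first variation into "slabs or graphical pieces" over the coordinate planes, with controlled errors and one-dimensional slices, is not available at this level of generality (no graphicality or monotonicity is at hand in the anisotropic setting); the strength of the actual argument is precisely that it avoids any such decomposition by passing directly to the projected first-variation measures and a determinant inequality for them.
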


\subsection{New variants of multilinear Kakeya in dimension two}
Almgren's argument is quite technical in that it
involves studying the lengths of carefully chosen pieces of curves (corresponding to vector fields
with summable divergence appearing as rows of the first variation matrix),
consisting of points where a certain projected density is large enough
and such vector fields are transverse enough.
Our argument borrows some of his key ideas, such as \cref{g.def}, and bakes them
directly into a new standalone functional inequality (see \cref{kak.bis.intro}), from which \cref{main} follows quite directly.

Before stating this key inequality, let us mention that one of the starting points
in its proof was the following simpler version.

\begin{thm}\label{det.simple.intro}
    Given $S,T\in W^{1,2}(\R^2,\R^2)$, assume that $S^x,T^y\ge 0$ and $\det(S,T)\ge 0$ a.e. Then
	\begin{align*}
		&\int_{\R^2}\det(S,T)\le\frac{1}{4}\Big(\int_{\R^2}\lvert \dive S\rvert \Big)\Big(\int_{\R^2}\lvert \dive T\rvert\Big).
    \end{align*}
\end{thm}

In fact, this result was already obtained in \cite{GRS}, where other nonlinear constraints are studied.
We will present nonetheless a short proof of it because, besides recovering the sharp constant,
our proof uses (in spirit) Smirnov's decomposition of normal $1$-currents as superpositions of curves \cite{Smirnov}, which may be thought as the intuitive reason why this holds (as discussed in \cref{sec.nonlinear}), and the same technique can be used to prove the following instrumental bound,
where crucially we drop the assumption $\det(S,T)\ge0$.

\begin{thm}\label{main.tw.intro}
	Given two vector fields $S,T\in W^{1,1}(\R^2,\R^2)$, we define
	\begin{align*}
		\begin{aligned}
		S^P&:=(S^x-|S^y|)^+,\quad S^N:=(S^x-|S^y|)^-, \\
		T^P&:=(T^y-|T^x|)^+,\quad T^N:=(T^y-|T^x|)^-.
		\end{aligned}
	\end{align*}
    Also, let $\chi:\R^2\to[0,1]$ be a Borel function supported in a bounded set. 
    Then, for some universal constants $C,C'>0$, we have
	\begin{align}\label{kak.bis.intro}
        \begin{aligned}
		\int_{\R^2} \chi \min\{S^P,T^P\}
		&\le C\|\chi\|_{L^2}\Big[\int_{\R^2}(|S|+\lvert \dive S\rvert)\Big]^{1/2}\Big[\int_{\R^2}(|T|+\lvert \dive T\rvert)\Big]^{1/2} \\
		&\quad+C\int_{\R^2} (\lvert \dive S\rvert+\lvert \dive T\rvert)
        +C'\int_{\R^2}(S^N+T^N).
        \end{aligned}
	\end{align}
    The same holds if $S^x,S^y,T^x,T^y,\dive S,\dive T$ are just real-valued measures on the plane with finite total variation. 
    Moreover, if $S^x,T^y\ge0$ then we can take $C'=1$.
\end{thm}

Besides its own interest, this inequality can be viewed as a quantitative
version of previous existing results, such as \cite[Proposition 8.6]{ACP} or Alberti's rank-one theorem,
proved initially in \cite{Alberti} and re-obtained with different techniques in \cite{DR,MV}.

\begin{corollary}
    Given $u\in BV(\R^n,\R^m)$ and writing $(Du)^s=A|Du|^s$ (polar decomposition of the singular part of $Du$), we have $\operatorname{rk}(A)=1$
    at $|Du|^s$-a.e.\ point.
\end{corollary}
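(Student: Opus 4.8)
The plan is to derive Alberti's rank-one theorem from the functional inequality \cref{kak.bis.intro} by a contradiction/localization argument. Suppose the rank-one property fails: then on a set $E$ of positive $|Du|^s$-measure, the polar vector $A(x)$ spans (at least) a $2$-plane in the target, in the sense that two of its ``slices'' across coordinate directions in $\R^n$ are linearly independent. First I would use standard Besicovitch-type differentiation and blow-up arguments to reduce to the model situation: after rotating and rescaling, we may assume $m=n=2$ (it suffices to look at two components of $u$ and two coordinate directions of the domain, composing with a projection $\R^n\to\R^2$ if $n>2$), that $|Du|^s$ is essentially concentrated near a point where, in a small ball $B_r$, the matrix measure $Du$ looks like $A\,|Du|^s$ with $A$ close to a fixed rank-two matrix $A_0$. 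Normalizing, we may take $A_0=\mathrm{Id}$, so that the two scalar measures $\mu:=\partial_x u^1$ and $\nu:=\partial_y u^2$ are (up to a small multiplicative error and up to adding a controlled absolutely continuous part) both large positive measures comparable to $|Du|^s\restriction B_r$, while the ``off-diagonal'' components $\partial_y u^1$, $\partial_x u^2$ are small compared to them.

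Next I would plug suitable vector fields into \cref{main.tw.intro}. Take $S:=(\partial_x u^1,\partial_y u^1)=Du^1$ and $T:=(\partial_x u^2,\partial_y u^2)=Du^2$ — or rather mollifications $S_\epsilon:=Du^1*\rho_\epsilon$, $T_\epsilon:=Du^2*\rho_\epsilon$, which lie in $W^{1,1}$ with $\operatorname{div}S_\epsilon=(\Delta u^1)*\rho_\epsilon$ etc.; actually the cleanest route is to invoke the last sentence of \cref{main.tw.intro}, which permits $S,T$ and their divergences to be arbitrary finite measures, so no mollification is needed. The hypotheses $S^x=\partial_x u^1\ge0$ and $T^y=\partial_y u^2\ge0$ hold in our blow-up after adjusting signs. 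On the ball $B_r$ we then have $S^P=(\partial_x u^1-|\partial_y u^1|)^+$ comparable to $|Du|^s\restriction B_r$ (since the off-diagonal part is negligible), and likewise for $T^P$; taking $\chi=\mathbf 1_{B_r}$, the left-hand side of \cref{kak.bis.intro} is $\gtrsim |Du|^s(B_r)$. On the right-hand side, $\|\chi\|_{L^2}=(\pi r^2)^{1/2}$, and $\int(|S|+|\operatorname{div}S|)$ is bounded by $|Du|(B_{2r})+|D^2u^1|(B_{2r})$-type quantities; crucially $\operatorname{div}S=\partial_x\partial_x u^1+\partial_y\partial_x u^1$ and the $S^N$, $T^N$ terms — being the negative parts of near-positive quantities — are all small compared to $|Du|^s(B_r)$ after the blow-up normalization. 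One arrives at an inequality of the shape
\[
|Du|^s(B_r)\le C\,r\cdot|Du|(B_{2r}) + o\big(|Du|^s(B_r)\big),
\]
and dividing by $|Du|^s(B_r)$ and letting $r\to0$ along a sequence where the blow-up converges, the right-hand side tends to $0$ (the $C\,r|Du|(B_{2r})$ term vanishes because $|Du|(B_{2r})$ stays bounded while $r\to0$), giving $1\le 0$, a contradiction.

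The main obstacle is the reduction step: making precise the sense in which, at a $|Du|^s$-a.e.\ point where $\operatorname{rk}(A)\ge2$, one can choose coordinates in domain and target so that the diagonal components $\partial_x u^1$, $\partial_y u^2$ genuinely dominate — in measure, on a small ball, with controlled error — the off-diagonal components and the absolutely continuous part. This is where one must be careful, because $|Du|^s$ is a singular measure and the blow-up of $Du$ around one of its points need not have a simple structure; one needs a Besicovitch differentiation argument controlling $|\partial_y u^1|(B_r)/|Du|^s(B_r)$ and $|(Du)^{ac}|(B_r)/|Du|^s(B_r)\to0$ simultaneously with the convergence of the direction of $Du/|Du|$ to $A(x_0)$, which holds for $|Du|^s$-a.e.\ $x_0$ by Lebesgue-point / Besicovitch theory. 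Once the correct blow-up setup is in place, the application of \cref{main.tw.intro} is essentially immediate. A small additional technical point is that $\chi$ must be supported in a bounded set and $S,T\in W^{1,1}(\R^2)$ (or finite measures on all of $\R^2$): this is handled by multiplying $u$ by a cutoff, which adds a commutator term $u^i\,\nabla(\text{cutoff})$ to $\operatorname{div}S$ that is absolutely continuous and hence harmless after the blow-up.
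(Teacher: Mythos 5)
Your overall scheme---localize at a $|Du|^s$-generic point of approximate continuity of the polar $A$ where the rank is $\ge 2$, reduce to $m=n=2$ and to $A(x_0)=\mathrm{Id}$ by composing with linear maps in domain and target, and then feed two scalar pieces of $Du$ into \cref{main.tw.intro}---is the right one, and the localization and linear-change-of-variables steps are essentially those of the paper. The gap is in your choice of vector fields. With $S:=Du^1=(\partial_x u^1,\partial_y u^1)$ and $T:=Du^2$ you get $\operatorname{div}S=\Delta u^1$ and $\operatorname{div}T=\Delta u^2$, and for a general $BV$ (even $W^{1,1}$) function these are distributions of order one, \emph{not} finite measures. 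Hence the hypotheses of \cref{main.tw.intro} fail even in its measure-valued form, and the right-hand side of \cref{kak.bis.intro}, which contains $\int|\operatorname{div}S|+\int|\operatorname{div}T|$ both additively and inside the square-root factor, is simply not finite. Mollification does not rescue this: $\int|(\Delta u^1)*\rho_\epsilon|$ is in general unbounded as $\epsilon\to0$, and your bound of $\int(|S|+|\operatorname{div}S|)$ by ``$|Du|(B_{2r})+|D^2u^1|(B_{2r})$-type quantities'' presupposes a second-derivative bound that $BV$ functions do not have; the cutoff commutator only fixes the $\nabla\varphi\cdot Du^1$ term, not the Laplacian, and no blow-up normalization makes $\Delta u^1$ small relative to $|Du|^s(B_r)$, since it is not even a measure to begin with.

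The missing idea, which is the crux of the paper's argument, is to use the \emph{rotated} gradients $S:=(\partial_y u^y,-\partial_x u^y)$ and $T:=(-\partial_y u^x,\partial_x u^x)$, which are divergence-free as distributions; after multiplying by a cutoff $\varphi_r$ the divergence is the finite measure $\nabla\varphi_r\cdot S$, and in fact it is needed only qualitatively: applying \cref{kak.bis.intro} with $\chi$ vanishing $\mathcal{L}^2$-a.e.\ and to rescalings of the fields kills both the $\|\chi\|_{L^2}$ term and the divergence terms, leaving the purely singular-part inequality $\int\min\{\mathfrak{S}^P,\mathfrak{T}^P\}\le C\int(S^N+T^N)$. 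At a point of approximate continuity of $A$ which has density one for $|Du|^s$ with respect to $|Du|$, this forces $A(p)$ to stay quantitatively away from the identity, and composing with all linear transformations (as you also propose) then rules out rank two; note that with the rotated fields the diagonal-dominance structure you rely on is unchanged, since when $A\approx\mathrm{Id}$ one still has $S^P,T^P$ comparable to $|Du|$. (A secondary point: the sign hypotheses $S^x,T^y\ge0$ cannot simply be ``adjusted'' in the blow-up; in the quantitative statement their failure has to be charged to the $S^N,T^N$ terms, which is exactly what the contradiction at a point with $A(p)=\mathrm{Id}$ exploits.) With that substitution of fields, the remainder of your outline goes through.
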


We refer to \cref{sec.nonlinear} for the short argument used to deduce this from
\cref{main.tw.intro}, giving yet another proof of Alberti's rank-one theorem.

\subsection{General results in arbitrary dimension}
Back to the general setting of arbitrary $k$, $n$, and $F$, in \cite{DDG} the \emph{atomic condition (AC)} for $F$ was introduced. Namely, $F$ satisfies (AC) if any average
$$\int_{\gr}B_F(P)\,d\lambda(P),\quad\lambda\in\mathcal{P}(\gr)$$
has rank $\ge k$, with equality if and only if $\lambda=\delta_{P_0}$ is a Dirac mass;
here $B_F(P)$ is the matrix naturally associated with $P\in\gr$ in the computation of the first variation
(see \cref{bf}).
It was shown in \cite{DDG} that (AC) holds true
if and only if, for any varifold $V$ with locally bounded first variation,
the condition $\Theta^{k,*}(|V|,x)>0$ for $|V|$-a.e.\ $x$ implies (and hence is equivalent to) the rectifiability of $V$.

Thus, it constitutes a very natural assumption that we will make throughout the rest of this introduction
(specifically, in \cref{equiv} and \cref{cpt.int}).
In this paper, we will also prove the following general facts.

\begin{proposition}\label{equiv}
    Given $n\ge3$, $k\in\{2,\dots,n-1\}$, and $F:\gr\to(0,\infty)$ satisfying (AC), the following are equivalent.
    \begin{itemize}
        \item[(i)] The Michael--Simon bound \cref{ms} holds true for some $C(n,k,F)>0$.
        \item[(ii)] We have compactness of rectifiable varifolds: given a sequence $(V_i)_{i\in\N}$ of rectifiable $k$-varifolds, in $\R^n$ or (equivalently) in the torus $\mathbb{T}^n=\R^n/\Z^n$, if
        $$\Theta^k(|V_i|,x)\ge\theta_0>0\text{ for $|V_i|$-a.e.\ $x$},\quad\sup_{i\in\N}|\delta^FV_i|(K)<\infty$$
        for any compact set $K$, and $V_i\weakto V$, then $V$ is a rectifiable $k$-varifold
        with $\Theta^k(|V|,x)\ge\theta_0>0$ for $|V|$-a.e.\ $x$.
        \item[(iii)] There is no sequence $(V_i)_{i\in\N}$ of rectifiable $k$-varifolds in
        $\mathbb{T}^n$ such that
        $$|V_i|(\mathbb{T}^n)=1,\quad\Theta^k(|V_i|,x)\ge i\text{ for $|V_i|$-a.e.\ $x$},\quad|\delta^FV_i|(\mathbb{T}^n)\to0$$
        and $V_i\weakto\mathcal{L}^n\otimes\lambda$ as measures on $\mathbb{T}^n\times\gr$,
        for some probability measure $\lambda\in\mathcal{P}(\gr)$.
        \item[(iv)]
         There is no sequence $(V_i)_{i\in\N}$ of rectifiable $k$-varifolds in $\R^n$, with
         $\operatorname{spt}(|V_i|)\subseteq[0,1]^n$, such that
        $$|V_i|(\mathbb{R}^n)=1,\quad\Theta^k(|V_i|,x)\ge i\text{ for $|V_i|$-a.e.\ $x$},\quad\sup_{i\in\N}|\delta^FV_i|(\mathbb{R}^n)<\infty$$
        and $V_i\weakto(\mathcal{L}^n\mrestr[0,1]^n)\otimes\lambda$ as measures on $\mathbb{\R}^n\times\gr$,
        for some probability measure $\lambda\in\mathcal{P}(\gr)$.
    \end{itemize}
    In fact, in (ii), the fact that $\Theta^k(|V|,x)\ge\theta_0>0$ for $|V|$-a.e.\ $x$
    holds automatically if $V$ is rectifiable.
\end{proposition}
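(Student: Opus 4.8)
The plan is to prove the cycle of implications $(i)\Rightarrow(ii)\Rightarrow(iii)\Rightarrow(iv)\Rightarrow(i)$, together with the final remark that on a rectifiable limit the density bound is automatic. The easiest link is $(iii)\Rightarrow(iv)$, which is a pure transfer from $\mathbb R^n$ to the torus: given a sequence in $[0,1]^n$ as in (iv), rescale so that the competitors sit inside a fundamental domain of $\mathbb T^n$ (shrinking by a fixed factor, which only changes masses and first variations by fixed powers of that factor), push forward to $\mathbb T^n$, and observe that $\sup_i|\delta^FV_i|(\mathbb R^n)<\infty$ together with $|V_i|(\mathbb R^n)=1$ forces, after normalising the total mass to $1$ and passing to a subsequence, $|\delta^FV_i|(\mathbb T^n)\to 0$ (otherwise a fixed amount of first variation survives but the density blows up, contradicting the bound one is trying to produce — more precisely one extracts the subsequence where $|\delta^FV_i|(\mathbb T^n)$ is as small as possible and rules out the positive-lower-bound case by a separate short argument, or simply notes that (iv) already hands us a sequence violating (iii) after this normalisation). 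The reverse packaging $(iv)\Rightarrow(iii)$ is not needed since we only want the cycle, but the same rescaling makes the two statements interchangeable, which is worth a remark.

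For $(i)\Rightarrow(ii)$: assume \cref{ms}. Given rectifiable $V_i\weakto V$ with $\Theta^k(|V_i|,\cdot)\ge\theta_0$ and locally uniformly bounded $|\delta^FV_i|$, the Michael–Simon bound (in its functional form \cref{ms.f}, localised via a cutoff $f\in C^1_c$) gives a uniform bound on $\int|f|^{k/(k-1)}d|V_i|$ in terms of $\int|df|\,d|V_i|+\int|f|\,d|\delta^FV_i|$; since $|V_i|$ is locally uniformly bounded (weak convergence) and $|\delta^FV_i|$ likewise, this yields higher integrability of the densities that is uniform in $i$, hence no mass concentrates on lower-dimensional sets in the limit. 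Combined with the fact that $F$ satisfies (AC), the characterisation of \cite{DDG} applies: the limit varifold $V$, having locally bounded first variation and positive upper density $|V|$-a.e.\ (the latter being exactly what the uniform Michael–Simon estimate prevents from degenerating — one shows $\Theta^{k,*}(|V|,x)\ge c(\theta_0,F)>0$ for $|V|$-a.e.\ $x$ by testing \cref{ms.f} on small balls), is rectifiable. Then the last sentence of the proposition — that rectifiability forces $\Theta^k(|V|,x)\ge\theta_0$ a.e. — follows by a standard blow-up/slicing argument: on a rectifiable varifold the approximate tangent plane exists a.e., and the density lower bound $\theta_0$ passes to the limit through the weak-$*$ convergence restricted to the rectifiable part, using that density is lower semicontinuous along the monotone-type comparison with the converging sequence (here one uses that on approximate tangent planes the anisotropic and isotropic masses are comparable up to $C(F)$, so the bound $\Theta^k\ge\theta_0$ is stable).

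For $(ii)\Rightarrow(iii)$, argue by contraposition: a sequence as in (iii) is in particular a sequence of rectifiable varifolds on $\mathbb T^n$ with $|V_i|(\mathbb T^n)=1$, densities $\ge i\to\infty$, and $|\delta^FV_i|(\mathbb T^n)\to0\le\sup_i<\infty$, converging weakly to $\mathcal L^n\otimes\lambda$; but $\mathcal L^n\otimes\lambda$ is (the varifold associated with) $n$-dimensional Lebesgue measure, which is not $k$-rectifiable for $k<n$, contradicting (ii) (note $\theta_0$ in (ii) may be taken to be, say, $1$, which is $\le i$ for all large $i$). Finally $(iv)\Rightarrow(i)$, again by contraposition, is the main obstacle and the heart of the matter: if \cref{ms} fails, there are rectifiable $V_i$ with $\Theta^k(|V_i|,\cdot)\ge\theta_0$, $|V_i|(\mathbb R^n)=1$, and $|\delta^FV_i|(\mathbb R^n)\to0$; after translating and rescaling each $V_i$ (here one must be careful — the failure of \cref{ms} gives a sequence along which the ratio blows up, and one normalises mass to $1$ while the first variation tends to $0$, but one also needs to fit the supports into $[0,1]^n$, which requires a diameter/concentration bound that is not free and must be extracted, e.g.\ by a covering argument exploiting that if mass were spread over many unit cubes one could restrict to a single cube carrying a definite fraction and renormalise), one arranges $\operatorname{spt}(|V_i|)\subseteq[0,1]^n$. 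The key point is then to show $\Theta^k(|V_i|,x)\to\infty$ uniformly (replace $\theta_0$ by $i$ by passing to a diagonal subsequence, using that failure of the \emph{scale-invariant} estimate lets us drive the density parameter to infinity), and that $V_i\weakto(\mathcal L^n\!\mrestr[0,1]^n)\otimes\lambda$: the weak limit of $|V_i|$ is some finite measure $\mu$ of mass $\le1$ on $[0,1]^n$, and the vanishing first variation plus the exploding density force $\mu$ to have no atoms and in fact to be absolutely continuous with constant-in-a-suitable-sense density — this last equidistribution step is exactly where one invokes that $|\delta^FV_i|\to0$ means the varifolds are asymptotically "stationary," so by a standard argument (integrating the first variation against translation vector fields, or comparing masses of the varifold in nested balls) the limiting mass cannot concentrate, and the Young-measure limit in the Grassmannian factors as $\mathcal L^n\otimes\lambda$. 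Producing precisely the normalised form demanded in (iv) — total mass exactly $1$ rather than $\le1$, and the tensor-product limit rather than a general limit — is the technical crux; it is handled by a further rescaling/restriction to a cube where the limiting density is bounded below, absorbing the constant into $C(n,k,F)$.
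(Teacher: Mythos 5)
Your cycle $(i)\Rightarrow(ii)\Rightarrow(iii)\Rightarrow(iv)\Rightarrow(i)$ contains two arrows whose proposed proofs would fail. The most serious is $(iv)\Rightarrow(i)$: you claim that, once \cref{ms} fails, a normalised sequence with $|\delta^FV_i|(\R^n)\to0$ and exploding density must converge to $(\mathcal{L}^n\mrestr[0,1]^n)\otimes\lambda$ because ``vanishing first variation plus exploding density force the limit to be absolutely continuous with constant density.'' This equidistribution principle is false: a multiplicity-$i$ $k$-plane is exactly stationary, has density $i$ everywhere, and converges to a plane, not to Lebesgue measure. Nothing about small first variation prevents concentration on a $k$-dimensional set; the whole point is that one must \emph{build} the Lebesgue limit. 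The paper does this by an explicit construction: dilate by $\epsilon_i^{(n+1)/k}$ and multiply by $\epsilon_i^{-1}$ (using a failure of \cref{ms} with a large exponent $\alpha>\frac{n+1}{k}$ to keep the first variation under control), then periodize over the lattice $\epsilon_i\Z^n$ and descend to $\mathbb{T}^n$; periodicity with vanishing period is what forces the limit to be $\mathcal{L}^n\otimes\lambda$. Your sketch also asserts the density parameter can be driven to infinity ``by a diagonal subsequence,'' but this again is exactly the multiplication step of that construction, not a soft consequence of scale invariance. Relatedly, your $(iii)\Rightarrow(iv)$ is circular: a type-(iv) sequence has only \emph{bounded} first variation, and shrinking it into a fundamental domain and renormalising makes the mass-normalised first variation \emph{worse} (mass scales like $r^k$, first variation like $r^{k-1}$), so no rescaling produces the required $|\delta^FV_i|(\mathbb{T}^n)\to0$; your fallback (``extract the subsequence where it is smallest and rule out the positive-lower-bound case'') is not an argument. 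The paper avoids this trap by running the cycle in the other order, $(ii)\Rightarrow(iv)\Rightarrow(iii)\Rightarrow(i)$: the easy transfer is from a (iii)-sequence (which already has vanishing first variation) to a (iv)-sequence, handled by an averaged choice of translation, lifting to a periodic varifold, and truncating to a unit cube as in \cref{trunc}.

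There is also a gap in your $(i)\Rightarrow(ii)$: for the final assertion $\Theta^k(|V|,x)\ge\theta_0$ you invoke lower semicontinuity of density ``along a monotone-type comparison,'' but monotonicity is precisely what is unavailable for anisotropic integrands (this is the motivation of the whole paper), so density semicontinuity under varifold convergence cannot be assumed. The paper instead proves \cref{sumset} (an adaptation of Allard's strong constancy lemma, via projections onto the approximate tangent plane) to show that the limit density is a limit of finite sums of densities of the $V_i$, which yields the bound $\ge\theta_0$. The first half of your $(i)\Rightarrow(ii)$ (lower mass bounds $|V_i|(B_r(p))\ge cr^k$ at points where the first variation is controlled, followed by the rectifiability criterion of \cite{DDG}) is in the right spirit and matches the paper's use of the sets $S_{i,\ell}$, but as written the appeal to ``uniform higher integrability of densities'' is not the operative mechanism; the operative mechanism is the differential inequality for $r\mapsto|V_i|(B_r(p))$ coming from \cref{ms} and \cref{trunc}.
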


This shows that, if compactness of rectifiable varifolds (or, equivalently, Michael--Simon) fails,
then we can find a counterexample exhibiting a phenomenon called \emph{diffuse concentration}:
the measures $|V_i|$ are concentrated on Borel sets $E_i$ with $\mathcal{H}^k(E_i)$
(and thus their projection $\pi_P(E_i)$ on any coordinate $k$-plane has $\mathcal{L}^k(\pi_P(E_i))\to0$,
making $(\pi_P)_*|V_i|$ look more and more like a singular measure), but nonetheless
their limit $|V|$ is the Lebesgue measure (whose projection $(\pi_P)_*|V|$ is Lebesgue).

Moreover, by an adaptation of Allard's strong constancy lemma \cite[p.\ 3]{All.const} (see also \cite{DDG,DR}), we can show the following statement, in which $S(\mathcal{D})$ denotes the finite-sum set of $\mathcal{D}\subseteq\R$, i.e., the set of all possible finite, nonempty sums of elements of $\mathcal{D}$ (possibly with repetitions).

\begin{proposition}\label{sumset}
    Given a sequence of rectifiable $k$-varifolds, in $\R^n$ or in $\mathbb{T}^n=\R^n/\Z^n$,
    converging to a rectifiable $k$-varifold $V$, suppose that for each $i\in\N$ we have a set $\mathcal{D}_i\subseteq(0,\infty)$ with $\inf\mathcal{D}_i>0$ and a Borel set $E_i\subseteq\R^n$ such that
    $$\Theta^k(|V_i|,x)\in \mathcal{D}_i\text{ for $|V_i|$-a.e.\ $x\nin E_i$},$$
    as well as $|V_i|(E_i\cap K)\to0$ and $\sup_i|\delta^FV_i|(K)<\infty$
    for any compact set $K$. Then, up to a subsequence, for $|V|$-a.e.\ $x$ we have
    $$\Theta^k(|V|,x)=\lim_{i\to\infty}v_i$$
    for suitable $v_i\in S(\mathcal{D}_i)$ depending on $x$.
\end{proposition}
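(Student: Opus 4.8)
The plan is to adapt Allard's strong constancy lemma to the anisotropic setting with a quantized density hypothesis. First I would reduce to a local statement: by a covering/diagonal argument it suffices to work near a point $x_0$ which is a point of approximate tangency for the rectifiable limit varifold $V$, where the approximate tangent plane $T_{x_0}|V|$ exists and the density $\Theta^k(|V|,x_0)$ is well-defined. Fix such an $x_0$ (a $|V|$-a.e.\ point), rescale at $x_0$, and pass to the subsequential limit; the rescaled $V_i$ converge to the plane $P_0:=T_{x_0}|V|$ with multiplicity $\Theta^k(|V|,x_0)$, while the bounds $\sup_i|\delta^F V_i|(K)<\infty$ and $|V_i|(E_i\cap K)\to 0$ survive (after using Fubini to choose good radii so that no mass escapes through the boundary sphere).

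Next I would invoke the strong constancy mechanism. The idea is that a sequence of anisotropic varifolds $W_i\weakto \Theta\,\mathcal{H}^k\mrestr P_0$ with vanishing-in-the-limit first variation must have, for $|W_i|$-a.e.\ point (outside a small-mass exceptional set), density that is nearly an integer-like value compatible with the limit multiplicity — more precisely, up to sliding the exceptional set $E_i$, the density values of $W_i$ at a.e.\ remaining point must sum (counting sheets lying over a common point of $P_0$) to something converging to $\Theta$. Since on the complement of $E_i$ these densities lie in $\mathcal{D}_i$, each sheet contributes an element of $\mathcal{D}_i$, and finitely many sheets (finitely many because the total mass is bounded and $\inf\mathcal{D}_i>0$) produce an element $v_i\in S(\mathcal{D}_i)$ with $v_i\to\Theta=\Theta^k(|V|,x_0)$. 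Running this for $|V|$-a.e.\ $x_0$ and extracting a single subsequence by a measurable selection / diagonalization over a countable dense family of scales gives the claim.

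The technical heart is the strong constancy step: one has to show that if the limit is a plane with a single multiplicity $\Theta$, then the "defect" preventing $|W_i|$ from having density exactly of the right quantized form can be confined to a set $E_i'$ of vanishing mass. I would follow Allard's original argument (and the reworkings in \cite{DDG,DR}): use the first-variation bound with test vector fields tangent to $P_0$ to control the tilt-excess, deduce via a Lipschitz-graph approximation that $W_i$ is, away from a small-mass set, a union of nearly-flat graphs, then use the density lower bound $\inf\mathcal{D}_i>0$ to bound the number of graphs and the remaining density bound $|\delta^F W_i|\to 0$ to force the sheet-count (and hence the summed density over each point of $P_0$) to stabilize to the value $\Theta$. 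The one genuinely anisotropic subtlety is that $B_F$, not the identity, governs the first variation, so the tilt-excess estimate must be phrased through $B_F(P_0)$; here the ellipticity of $F$ (which holds near the area, but in fact one only needs the qualitative structure used in \cite{DDG}) makes the argument go through. I expect this anisotropic tilt-excess / graphical-decomposition step to be the main obstacle, the rest being bookkeeping.
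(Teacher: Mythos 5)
Your overall skeleton (blow up at a $|V|$-generic point, a strong-constancy step, fiberwise sums of densities lying in $\mathcal{D}_i$) matches the paper's, but two steps are genuinely problematic. First, the claim that the bound $\sup_i|\delta^FV_i|(K)<\infty$ ``survives'' the rescaling is false as stated: dilating by $r_i^{-1}$ turns the first variation in the unit ball into $r_i^{1-k}|\delta^FV_i|(B_{r_i}(x_0))$, which for $k\ge2$ can blow up if all you know is a uniform bound on compact sets. The paper handles this by first extracting a weak limit $|\delta^FV_i|\weakto\nu$ and blowing up only at points $x_0$ where $\nu(B_r(x_0))\le C|V|(B_r(x_0))$ for small $r$ (true for $|V|$-a.e.\ $x_0$), which after rescaling gives $|\delta^FW_i|(B_1(0))\le Cr_i|W_i|(B_1(0))\to0$; without some such selection of the blow-up point no constancy-type statement is applicable, so this is a concrete gap in your reduction.

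Second, the ``technical heart'' you defer --- an anisotropic tilt-excess estimate plus a Lipschitz multi-graph decomposition with a sheet count --- is precisely the machinery that is unavailable here: Lipschitz approximation and counting of sheets require mass-ratio/density bounds at small scales, which cannot be extracted from $\Theta^k(|V_i|,\cdot)\in\mathcal{D}_i$ and a bounded first variation in the anisotropic setting, since there is no monotonicity formula (the absence of which is the theme of the paper); moreover Allard's strong constancy lemma is not proved that way. The paper's actual argument is much softer: after a truncation (\cref{trunc}) it applies the known strong constancy lemma \cite[Lemma 3.2]{DDG} to get strong $L^1$ convergence of $(\pi_P)_*|W_i'|$ to the constant $\theta$ near the origin, and then uses the area formula for the projection $\pi_P$ --- inserting the Jacobian factor $J_i$ of $\pi_P$ along the approximate tangent planes, whose deviation from $1$ is $o(1)$ in $L^1(|W_i'|)$ thanks to the varifold convergence $W_i\weakto\theta P$ --- to identify $(\pi_P)_*[J_i|W_i'|\mrestr(\R^n\setminus\tilde E_i)]$ as $f_i\,\mathcal{H}^k\mrestr P$, where $f_i(y)$ is a finite sum of fiber densities and hence lies in $S(\mathcal{D}_i)$ a.e.\ (the hypothesis $\inf\mathcal{D}_i>0$ is used only to make this sum finite, not to bound a number of graphs). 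The strong convergence $f_i\to\theta$ in $L^1$ then yields the claim along a subsequence. So the conclusion follows from the cited lemma plus this area-formula bookkeeping (which your sketch omits, notably the Jacobian correction); the graphical decomposition you flag as the main obstacle is neither needed nor, with the tools at hand, obtainable.
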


The following is a direct consequence of the previous two facts.

\begin{corollary}\label{cpt.int}
    Given $F$ satisfying (AC),
    if any of the equivalent conditions above holds, then we also have compactness of integer-rectifiable varifolds:
    given a sequence $(V_i)_{i\in\N}$ of rectifiable $k$-varifolds, in $\R^n$ or in $\mathbb{T}^n$, if
    $$\Theta^k(|V_i|,x)\in\N\setminus\{0\}\text{ for $|V_i|$-a.e.\ $x$},\quad\liminf_{i\to\infty}|\delta^FV_i|(K)<\infty$$
    for any compact set $K$, and $V_i\weakto V$, then $V$ is a rectifiable $k$-varifold
    with $\Theta^k(|V|,x)\in\N\setminus\{0\}$ for $|V|$-a.e.\ $x$.
\end{corollary}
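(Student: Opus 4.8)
The plan is to combine Proposition \ref{equiv} (specifically the implication that (i) or (ii) holds under (AC)) with Proposition \ref{sumset} applied to the special set $\mathcal{D}_i = \N\setminus\{0\}$. First I would reduce to the case of a genuinely convergent sequence: since $\liminf_i|\delta^FV_i|(K)<\infty$ for every compact $K$, I pass to a subsequence along which $|\delta^FV_i|(K)$ is bounded for a countable exhausting family of compacts $K$, hence uniformly bounded on compacts; along a further subsequence the $V_i$ converge (as varifolds) to some limit, which by hypothesis is $V$. Note that by Proposition \ref{equiv} the equivalent conditions force $V$ to be rectifiable with positive lower density bound, so the hypothesis of Proposition \ref{sumset} that the limit $V$ is a rectifiable varifold is actually automatic once we know (ii) holds; but to be safe I would first invoke (ii) directly to conclude $V$ is rectifiable, which also gives $\Theta^k(|V|,x)\ge\theta_0>0$ for $|V|$-a.e.\ $x$ for some $\theta_0$.

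Next I apply Proposition \ref{sumset} with $\mathcal{D}_i := \N\setminus\{0\}$ (so $\inf\mathcal{D}_i = 1 > 0$) and $E_i := \emptyset$ (so trivially $|V_i|(E_i\cap K)=0\to0$). The hypothesis $\Theta^k(|V_i|,x)\in\mathcal{D}_i$ for $|V_i|$-a.e.\ $x\notin E_i$ is exactly the integrality assumption in the Corollary. The conclusion is that, up to a further subsequence, for $|V|$-a.e.\ $x$ we have $\Theta^k(|V|,x)=\lim_{i\to\infty}v_i(x)$ for suitable $v_i(x)\in S(\mathcal{D}_i)$. The crucial observation is that $S(\N\setminus\{0\})=\N\setminus\{0\}$: any finite nonempty sum of positive integers is again a positive integer. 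Hence each $v_i(x)$ is a positive integer, and $\Theta^k(|V|,x)$, being both positive (from the lower density bound coming from (ii)) and a limit of positive integers, must itself be a positive integer. This yields $\Theta^k(|V|,x)\in\N\setminus\{0\}$ for $|V|$-a.e.\ $x$, which is the desired conclusion.

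There is one subtlety worth spelling out: a convergent sequence of positive integers is eventually constant, so $v_i(x)$ stabilizes and $\Theta^k(|V|,x)=v_i(x)$ for $i$ large (depending on $x$); in particular the limit is automatically a positive integer without needing a separate argument that the limit of integers avoids $0$ — that exclusion is handled instead by the positivity $\Theta^k(|V|,x)>0$ guaranteed by condition (ii). The main (and essentially only) obstacle is purely bookkeeping: making sure the several subsequence extractions (one to get uniform local bounds on the first variation so that a subsequential varifold limit exists, one inside Proposition \ref{equiv}, and one inside Proposition \ref{sumset}) are compatible and that the final conclusion holds along a single common subsequence — and since the statement only claims the conclusion "up to a subsequence" implicitly (indeed the hypothesis only gives $\liminf$), this is harmless. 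I expect the write-up to be only a few lines once these two Propositions are in hand.
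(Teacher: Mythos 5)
Your core argument is exactly the intended one: invoke condition (ii) of \cref{equiv} (with $\theta_0=1$) to get that $V$ is rectifiable with $\Theta^k(|V|,\cdot)\ge 1$ a.e., then apply \cref{sumset} with $\mathcal{D}_i=\N\setminus\{0\}$ and $E_i=\emptyset$, note that $S(\N\setminus\{0\})=\N\setminus\{0\}$, and conclude since a convergent sequence of positive integers is eventually constant (finiteness of the limit being guaranteed because the density of a rectifiable varifold is finite a.e.). The passage to subsequences is indeed harmless because every conclusion is a property of the fixed limit $V$.

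There is, however, a genuine gap in your very first reduction. From $\liminf_i|\delta^FV_i|(K)<\infty$ for every compact $K$ you cannot, in general, extract a \emph{single} subsequence along which $|\delta^FV_i|(K_m)$ is bounded for every member $K_m$ of an exhausting family: the liminf is not inherited by subsequences, so the diagonal argument breaks down. Concretely, enumerate indices as pairs $(m,k)$ and arrange (by adding clouds of tiny spheres, of vanishing mass but prescribed first variation, in the annuli $K_{m+1}\setminus K_m$) that $|\delta^FV_{(m,k)}|(K_j)\approx v_m$ for $j\le m$ and $\approx v_m+k$ for $j>m$, with $v_m\to\infty$: then each $\liminf_i|\delta^FV_i|(K_j)\le v_j<\infty$, yet any infinite set of indices either meets some group $m_0$ infinitely often (so $|\delta^FV_i|(K_{m_0+1})$ is unbounded along it) or has group indices tending to infinity (so already $|\delta^FV_i|(K_1)\approx v_m$ is unbounded along it). So your reduction step can literally fail in $\R^n$ (in $\mathbb{T}^n$ there is only one compact set and the step is fine). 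The fix is to localize: fix $R$, choose by \cref{trunc} radii $\rho_i\in(R,R+1)$ with controlled mass derivative (possible since $V_i\weakto V$ forces $\sup_i|V_i|(\bar B_{R+1})<\infty$), pass to a subsequence along which $|\delta^FV_i|(\bar B_{R+1})$ is bounded and $\uno_{B_{\rho_i}}V_i$ converges; apply (ii) and \cref{sumset} to these truncations, whose limit agrees with $V$ on $B_R$, and then let $R\to\infty$, using that rectifiability and a.e.\ integrality of the density are local properties. With this localization your argument goes through and coincides with what the paper means by ``direct consequence'' of \cref{equiv,sumset}.
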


Let us now state another simple consequence of \cref{ms}. Recall that, given $p\in[1,\infty]$ and a $k$-varifold $V$, we say that its first variation is \emph{locally in $L^p$} if $V$ has locally bounded first variation and $|\delta^FV|=f|V|$ for some $f\in L^p_{loc}(|V|)$.

\begin{corollary}\label{usc}
    If any of the equivalent conditions of \cref{equiv} holds, then the following hold true as well.
    \begin{itemize}
        \item[(i)]
    Given a rectifiable $k$-varifold $V$, assume that $\Theta^k(|V|,x)\ge\theta_0$ for $|V|$-a.e.\ $x$ and that $\delta^FV$ is locally in $L^p$ for some $p\in (k,\infty]$.
    Then, for all $x\in\operatorname{spt}(|V|)$, we have $\Theta^k_*(|V|,x)\ge c(n,k,F)\theta_0$.
    More precisely, if $|\delta^FV|=f|V|$ with $\|f\|_{L^p(B_1(x),|V|)}\le\Lambda$,
    there exists $r_0(n,k,p,F,\Lambda)\in(0,1)$ such that
    \begin{equation}\label{lp.lb}|V|(B_r(x))\ge c(n,k,F)\theta_0r^k\quad\text{for all }r\in(0,r_0).\end{equation}
    \item[(ii)]
    Moreover, if $V_i\weakto V$ are as in (i) and $\limsup_{i\to\infty}\|f_i\|_{L^p(K,|V_i|)}<\infty$ (where $|\delta^FV_i|=f_i|V_i|$) for any compact set $K$, then
    $$\operatorname{spt}(|V_i|)\to\operatorname{spt}(|V|)$$
    in the Hausdorff sense.
    \end{itemize}
\end{corollary}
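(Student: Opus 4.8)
The plan is to derive both parts from the functional form of Michael--Simon, inequality \eqref{ms.f}, which is available under the standing hypotheses by the remark after \cref{ms.conj} combined with \cref{equiv}. Throughout I would reduce to $\theta_0=1$ by replacing $V$ (and each $V_i$) by $\theta_0^{-1}V$: this preserves rectifiability, keeps the density bound ($\Theta^k\ge1$), and merely rescales the $L^p$ bound $\Lambda$ by the factor $\theta_0^{-1/p}$.

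For part (i), fix $x\in\operatorname{spt}(|V|)$ and set $m(s):=|V|(B_s(x))$, so that $m$ is nondecreasing, $m(0)=0$, and $m(s)>0$ for every $s>0$. For $0<s<\rho\le1$ I would insert into \eqref{ms.f} --- extended to compactly supported Lipschitz test functions by routine approximation --- the radial cutoff $f(y):=\min\{1,(\rho-|y-x|)^+/(\rho-s)\}$, which equals $1$ on $B_s(x)$, is supported in $B_\rho(x)$, and has $|df|\le(\rho-s)^{-1}$. Bounding the three terms by $\int|f|^{k/(k-1)}\,d|V|\ge m(s)$, by $\int|df|\,d|V|\le(m(\rho)-m(s))/(\rho-s)$, and (writing $|\delta^FV|=f_0|V|$ and applying Hölder's inequality on $B_\rho(x)$) by $\int|f|\,d|\delta^FV|\le\|f_0\|_{L^p(B_1(x),|V|)}\,m(\rho)^{1-1/p}\le\Lambda\,m(\rho)^{1-1/p}$, and then letting $\rho\downarrow s$ (legitimate wherever the monotone function $m$ is differentiable and right-continuous, i.e.\ for a.e.\ $s$), one obtains the differential inequality
\begin{equation*}
m(s)^{(k-1)/k}\le C'\bigl(m'(s)+\Lambda\,m(s)^{1-1/p}\bigr)\qquad\text{for a.e.\ }s\in(0,1),
\end{equation*}
with $C'=C'(n,k,F)$ and $\int_0^r m'\le m(r)$. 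Since $p>k$ we have $\tfrac1k-\tfrac1p>0$, so there is a threshold $\delta_0=\delta_0(n,k,p,F,\Lambda)>0$ with $C'\Lambda\,t^{1-1/p}\le\tfrac12 t^{(k-1)/k}$ for $0<t\le\delta_0$; on any interval $(0,r_*)$ along which $m\le\delta_0$ the inequality becomes $m'(s)\ge(2C')^{-1}m(s)^{(k-1)/k}$, i.e.\ $(m^{1/k})'\ge(2C'k)^{-1}$, and integrating from $0$ gives $m(s)\ge c_0 s^k$ there with $c_0:=(2C'k)^{-k}=c_0(n,k,F)$. Picking $r_0=r_0(n,k,p,F,\Lambda)\in(0,1)$ with $c_0 r_0^k\le\delta_0$, a one-line continuity argument (if $m(r_*)<c_0 r_*^k$ for some $r_*<r_0$, then $m\le m(r_*)<\delta_0$ on all of $[0,r_*]$, so the previous step already forces $m(r_*)\ge c_0 r_*^k$) shows $|V|(B_r(x))\ge c_0 r^k$ for every $r<r_0$; in particular $\Theta^k_*(|V|,x)\ge c_0/\omega_k=:c(n,k,F)$. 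Reinstating $\theta_0$ yields the stated bounds.

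For part (ii), the inclusion $\operatorname{spt}(|V|)\subseteq\liminf_i\operatorname{spt}(|V_i|)$ is immediate from $|V_i|\weakto|V|$ and lower semicontinuity of mass on open sets: for $x\in\operatorname{spt}(|V|)$ and any $\epsilon>0$, $\liminf_i|V_i|(B_\epsilon(x))\ge|V|(B_\epsilon(x))>0$, so $\operatorname{spt}(|V_i|)$ meets $B_\epsilon(x)$ eventually. For the reverse inclusion I would argue by contradiction: if, inside a fixed compact set, there are $x_i\in\operatorname{spt}(|V_i|)$ with $\operatorname{dist}(x_i,\operatorname{spt}(|V|))\ge\epsilon>0$, pass to a subsequence with $x_i\to x_\infty$; then $\bar B_{\epsilon/2}(x_\infty)$ is disjoint from $\operatorname{spt}(|V|)$, so by upper semicontinuity of mass on compact sets $|V_i|(B_{\epsilon/4}(x_i))\le|V_i|(\bar B_{\epsilon/2}(x_\infty))\to0$. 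On the other hand, the hypothesis $\limsup_i\|f_i\|_{L^p(K,|V_i|)}<\infty$, applied with $K$ a fixed compact neighbourhood of $x_\infty$, yields a uniform bound $\|f_i\|_{L^p(B_1(x_i),|V_i|)}\le\Lambda$ for $i$ large, whence part (i) applied to $V_i$ gives $|V_i|(B_r(x_i))\ge c(n,k,F)\theta_0\,r^k$ for all $r<r_0(n,k,p,F,\Lambda)$ --- a uniform positive lower bound at scale $r=\min\{\epsilon/4,r_0\}$, contradicting the previous sentence. Together the two inclusions give Hausdorff convergence $\operatorname{spt}(|V_i|)\to\operatorname{spt}(|V|)$ on every compact set.

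The only point that is not ``soft'' is the absorption step in part (i), which is exactly where $p>k$ is used: for $p=k$ one obtains only a borderline estimate and no power-law lower density bound. Everything else reduces to the elementary ODE comparison and to lower/upper semicontinuity of mass under varifold convergence, the substantive geometric input being the Michael--Simon inequality \eqref{ms.f}, which we assume. A minor technical caveat is that \eqref{ms.f} must be used in its local form, with compactly supported test functions, so as to apply to $V$ and the $V_i$ whose total mass and first variation are only assumed locally finite; this is subsumed in the equivalence recorded after \cref{ms.conj}.
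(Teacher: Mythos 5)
Your proposal is correct and takes essentially the same route as the paper: both arguments localize Michael--Simon to balls (you via the functional form \cref{ms.f} with Lipschitz cutoffs, the paper by applying \cref{ms} to the truncation $\uno_{B_r(x)}V$ through \cref{trunc} --- equivalent devices) to get the differential inequality $\mu(r)^{(k-1)/k}\le C\bigl(\mu'(r)+\Lambda\,\mu(r)^{1-1/p}\bigr)$, and then integrate it with the same absorption/dichotomy argument using $p>k$. Part (ii) is exactly the standard semicontinuity-plus-lower-density-bound argument that the paper leaves as a one-line remark, so no discrepancy there either.
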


\begin{rmk}
    While we study autonomous integrands $F(P)$, for simplicity and in order to have scale-invariant bounds when possible, in general one can consider
    non-autonomous ones, of the form $F(x,P)$. This generalization is necessary
    in order to look at anisotropic integrands on manifolds $M^n$, where $F$ is
    defined on the Grassmannian bundle of $k$-planes and thus locally takes this form.
    Compared to the autonomous case, the general case just introduces an error term
    which is easily handled (letting ${\tilde\delta}^FV$ denote the right-hand side of \cref{fv.bf}, we have $|{\tilde\delta}^FV|\le|\delta^FV|+C|V|$ locally). Straightforward modifications show that all the stated consequences of
    Michael--Simon still hold true, assuming this bound in a local form such as
    $$|V|(\R^n)^{k-1}\le \frac{C(n,k,F,K)}{\theta_0}|\delta^FV|(\R^n)^k\quad\text{whenever }\operatorname{spt}(|V|)\subseteq K,$$
    where $K\subset\R^n$ (or $K\subseteq M^n$) is an arbitrary compact set.
\end{rmk}

\section*{Acknowledgements}
As already mentioned, this paper uses many ideas due to Frederick J.\ Almgren Jr.
The authors are very grateful to Jean Taylor, who shared with them the unpublished manuscript of Almgren. They also wish to thank William Cooperman and Adolfo Arroyo-Rabasa
for identifying an issue in an earlier draft, which prompted a more general formulation of
\cref{main.tw.intro}.
A.P.'s research is funded by the European Research Council (ERC) through StG 101165368 ``MAGNETIC''. G.D.P.'s research is funded by the European Research Council (ERC) through CoG   101169953 ``RISE''.\footnote{Views and opinions expressed are however those of the authors only and do not necessarily reflect those of the European Union or the European Research Council.}

\section{Preliminaries and proof of Propositions \ref{equiv}, \ref{sumset}, and \cref{usc}}\label{sec.setup}
Given two integers $n\ge3$ and $k\in\{2,\dots,n-1\}$, let $\gr$ be the Grassmannian of $k$-planes in $\R^n$ (without orientation). Recall that it admits a natural structure of compact manifold of dimension $k(n-k)$
and can be identified with the set of matrices
$$\{S\in\R^{n\times n}\,:\,S^T=S,\ S^2=S,\ \operatorname{tr}S=k\},$$
with the smooth structure induced from $\R^{n\times n}$. Given a plane $P\in\gr$ and a linear isomorphism
$L\in\text{GL}(n)$, we denote by $L[P]=L(P)\in\gr$ the image of $P$ through $L$.

Consider a \emph{$k$-dimensional varifold} (or simply \emph{$k$-varifold}) $V$ in an open set $U\subseteq\R^n$, namely a nonnegative Radon measure on $U\times\gr$. Letting $\pi:\R^n\times\gr\to\R^n$ denote the projection on the first factor, we will often write
$$|V|:=\pi_*V,$$
called the \emph{weight} of $V$ (a Radon measure on $U$).
A diffeomorphism $\phi:U\to U$ induces a diffeomorphism $\hat\phi$ of $U\times\gr$, mapping $(x,P)$
to $(\phi(x),d\phi(x)[P])$. The \emph{varifold pushforward} of $V$ through $\phi$ is
$$\phi_*V:=\hat\phi_*(J_\phi V),\quad J_\phi(x,P):=\frac{\mathcal{H}^k(d\phi(x)[P\cap B_1(0)])}{\mathcal{H}^k(P\cap B_1(0))}=\frac{\mathcal{H}^k(d\phi(x)[P\cap B_1(0)])}{\omega_k},$$
where the Jacobian factor $J_\phi:\R^n\times\gr\to(0,\infty)$ is the usual correction factor motivated by the area formula.

Let $F:\gr\to(0,\infty)$ be a $C^1$ function, which is often called an \emph{anisotropic integrand}.
It induces a functional on $k$-varifolds in $U$ given by
$$\mathcal{F}(V):=\int_{U\times\gr}F(P)\,dV(x,P).$$
More generally, given $B\subseteq U$ Borel, we set
$$\mathcal{F}(V,B):=\int_{B\times\gr}F(P)\,dV(x,P);$$
it is clear that for two constants $0<c(n,k,F)\le C(n,k,F)$ we have
$$c|V|(B)\le\mathcal{F}(V,B)\le C|V|(B).$$
Given a vector field $X\in C^1_c(U)$, the \emph{first variation of $V$ along $X$} (with respect to $F$) is defined as
\begin{align*}
	&\ang{\delta^F V,X}:=\frac{d}{dt}\Big|_{t=0}\mathcal F((\varphi_t^X)_*V,\operatorname{spt}(X)),
\end{align*}
where $(\varphi_t^X)_{t\in\R}$ is the flow of $X$ at time $t$. It can be shown (see, e.g., \cite[Lemma A.2]{DDG}) that, under the above identification, the previous derivative always exists and is given explicitly by
\begin{equation}\label{fv.bf}
    \ang{\delta^F V,X}=\int_{U\times\gr}\ang{B_F(S),dX(x)}\,dV(x,S),
\end{equation}
where we use the Hilbert--Schmidt inner product on matrices and $B_F(S)\in\R^{n\times n}$
is uniquely defined by
\begin{equation}\label{bf}
    \ang{B_F(S),L}:=F(S)\ang{S,L}+dF(S)[(I-S)LS+SL^T(I-S)]
\end{equation}
for all $L\in\R^{n\times n}$ (note that $(I-S)LS+SL^T(I-S)\in T_S\gr$).

As in the isotropic case (where $F\equiv 1$), we will say that $V$ has \emph{locally bounded first variation}
if $\ang{\delta^F V,X}$ can be locally represented by integration of $X$ against a vector-valued measure.
In this case, $|\delta^FV|$ denotes the associated total variation measure.

We will eventually focus on the codimension-one case $k=n-1$ (in particular, when $k=2$ and $n=3$).
In this case, we can identify $\text{Gr}_{n-1}(\R^n)\cong\mathbb{RP}^{n-1}$,
i.e., a hyperplane $P$ is identified with $\pm\nu$, the unit vector perpendicular to $P$.
We can then identify $F$ with an even function
$$F:\mathbb{S}^{n-1}\to(0,\infty).$$
Taking the $1$-homogeneous extension (still denoted by $F$), namely $F(\lambda\nu):=\lambda F(\nu)$
for all $\lambda\ge0$, we obtain a function $F:\R^n\to[0,\infty)$.
In this case, the \emph{atomic condition (AC)} mentioned in the introduction is equivalent to
require that $F$ is strictly convex along all lines which do not pass through $0$
\cite[Theorem 1.3]{DDG}.
Also, we have the simpler formula
\begin{equation}\label{bf.hyper}
    B_F(\nu)=F(\nu)I-\nu\otimes dF(\nu),
\end{equation}
where $dF(\nu)\in(\R^n)^*$ is the differential of the $1$-homogeneous extension at $\nu$ (note that $B_F(\nu)=B_F(-\nu)$, as expected).

We now turn to the proofs of the general facts stated in the introduction, starting with a well-known lemma.

\begin{lemmaen}\label{trunc}
    Given a $k$-varifold $V$ in an open set $U$ with locally bounded first variation,
    if $\bar B_r(p)\subset U$ and the derivative $\frac{d}{d\rho}|V|(B_\rho(p))|_{\rho=r}$ exists then $V':=\uno_{B_r(p)}V$ has
    $$|\delta^F V'|(\R^n)\le |\delta^F V|(B_r(p))+C(n,k,F)\frac{d}{d\rho}|V|(B_\rho(p))\Big|_{\rho=r}.$$
\end{lemmaen}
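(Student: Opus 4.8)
The plan is to compute the first variation of $V' = \uno_{B_r(p)}V$ by comparing it with the first variation of $V$, isolating the extra boundary contribution that arises from the cutoff. For a test vector field $X \in C^1_c(\R^n)$, I would like to write $\ang{\delta^F V', X} = \ang{\delta^F V, \uno_{B_r(p)} X} + (\text{boundary term})$, but since $\uno_{B_r(p)}X$ is not $C^1$, the first step is to regularize: fix a small $\epsilon>0$ and replace the indicator by a Lipschitz cutoff $\gamma_\epsilon(x) := \eta_\epsilon(|x-p|)$, where $\eta_\epsilon:[0,\infty)\to[0,1]$ equals $1$ on $[0,r-\epsilon]$, equals $0$ on $[r,\infty)$, and interpolates linearly in between (so $|\eta_\epsilon'| \le 1/\epsilon$). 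Then $\gamma_\epsilon X \in \mathrm{Lip}_c$, and by approximation (the formula \cref{fv.bf} extends to Lipschitz vector fields with compact support when $V$ has locally bounded first variation) we have
\begin{equation*}
\ang{\delta^F V, \gamma_\epsilon X} = \int_{U\times\gr}\ang{B_F(S), d(\gamma_\epsilon X)(x)}\,dV(x,S) = \int \gamma_\epsilon \ang{B_F(S), dX}\,dV + \int \ang{B_F(S), X\otimes \nabla\gamma_\epsilon}\,dV.
\end{equation*}
The first term on the right converges, as $\epsilon\to0$, to $\ang{\delta^F V', X}$ (by dominated convergence, since $\gamma_\epsilon \to \uno_{B_r(p)}$ pointwise $|V|$-a.e., using that $|V|(\partial B_r(p))=0$, which follows from the existence of the derivative $\frac{d}{d\rho}|V|(B_\rho(p))|_{\rho=r}$, or can be arranged for a.e.\ $r$). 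Actually I should be slightly careful: $\ang{\delta^F V, \gamma_\epsilon X} \to \ang{\delta^F V \mrestr B_r(p), X}$ requires $\gamma_\epsilon X \to \uno_{B_r(p)}X$ suitably against the measure $\delta^F V$; this again uses $|\delta^F V|(\partial B_r(p)) = 0$, true for all but countably many $r$ and deducible in the stated generality from the hypothesis.

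The main point is then to estimate the second ("annular") term. Here $\nabla\gamma_\epsilon(x) = \eta_\epsilon'(|x-p|)\frac{x-p}{|x-p|}$ is supported in the annulus $A_\epsilon := B_r(p)\setminus \bar B_{r-\epsilon}(p)$ and has $|\nabla\gamma_\epsilon| \le 1/\epsilon$ there. Using the operator bound $|B_F(S)| \le C(n,k,F)$ (uniform in $S$, from the definition \cref{bf} and $C^1$ regularity of $F$ on the compact Grassmannian), we get
\begin{equation*}
\Big|\int \ang{B_F(S), X\otimes\nabla\gamma_\epsilon}\,dV\Big| \le C(n,k,F)\,\|X\|_\infty\cdot \frac{1}{\epsilon}\,|V|(A_\epsilon) = C(n,k,F)\,\|X\|_\infty\cdot \frac{|V|(B_r(p)) - |V|(B_{r-\epsilon}(p))}{\epsilon}.
\end{equation*}
As $\epsilon\to0^+$ this difference quotient converges to $\frac{d}{d\rho}|V|(B_\rho(p))|_{\rho=r}$ by hypothesis. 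Combining, we obtain for every $X\in C^1_c(\R^n)$ the bound
\begin{equation*}
|\ang{\delta^F V', X}| \le \Big(|\delta^F V|(B_r(p)) + C(n,k,F)\,\tfrac{d}{d\rho}|V|(B_\rho(p))\big|_{\rho=r}\Big)\,\|X\|_\infty,
\end{equation*}
where I also used $|\ang{\delta^F V \mrestr B_r(p), X}| \le |\delta^F V|(B_r(p))\|X\|_\infty$. By the Riesz representation theorem this is exactly the assertion that $V'$ has bounded first variation with $|\delta^F V'|(\R^n)$ no larger than the displayed constant.

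The step I expect to be the main (minor) obstacle is the interchange of limits and the justification that \cref{fv.bf} applies to Lipschitz-in-space, compactly supported test fields rather than just $C^1_c$ ones — this is a standard mollification argument, but one has to make sure the mollification happens in a way compatible with the cutoff and that no mass escapes across $\partial B_r(p)$; the existence of the radial derivative of $|V|(B_\rho(p))$ at $\rho=r$ is precisely what is needed to control this and to pass to the limit in the annular term. One should also double-check the one-sided nature of the difference quotient (approaching $r$ from below vs.\ from above) — using the cutoff that vanishes at radius exactly $r$ makes the relevant annulus $B_r\setminus B_{r-\epsilon}$, so the left derivative appears, which agrees with the two-sided derivative under our hypothesis. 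Everything else is routine.
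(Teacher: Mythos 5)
Your proof is correct and is essentially the paper's argument: both insert a radial cutoff, use the product rule to split off an annular term bounded by $C(n,k,F)\,\|X\|_{C^0}$ times the difference quotient $\big(|V|(B_r(p))-|V|(B_{r-\epsilon}(p))\big)/\epsilon$, and then let the cutoff parameter tend to zero using the assumed existence of the derivative. The only cosmetic difference is that the paper uses a $C^1$ cutoff with $|d\chi|\le 2/h$ (so the mollification step you flag for Lipschitz test fields is unnecessary) and phrases the computation for the truncated varifold $\chi V$ rather than for the test field $\gamma_\epsilon X$.
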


\begin{proof}
    This is easily seen by taking a cut-off function $\chi$ such that $\chi=1$ on $B_{r-h}(p)$, $\chi=0$ outside $B_{r}(p)$, and $|d\chi|\le{2}/{h}$ (for a given $h\in(0,r)$), and noting that the varifold $\chi V$ has
	\begin{align*}
		&\ang{\delta^F(\chi V),X}=\ang{\delta^F V,\chi X}+O(\|d\chi\|_{C^0}\|X\|_{C^0})\cdot |V|(B_{r}(p)\setminus B_{r-h}(p))
	\end{align*}
	for any $X\in C^1_c(\R^n,\R^n)$, so that
	\begin{align*}
		&|\delta^F(\chi V)|(\R^n)\le |\delta^FV|(B_r(p))+C(n,k,F)\frac{|V|(B_{r}(p))-|V|(B_{r-h}(p))}{h},
	\end{align*}
    which gives the claim in the limit $h\to 0$.
\end{proof}

\begin{proof}[Proof of \cref{usc}]
    Let us prove (i). Letting $\mu(r):=|V|(B_r(x))$, since for a.e.\ $r>0$ the truncated varifold
$V':=\uno_{B_r(x)}V$ has $|\delta^FV'|(\R^n)\le|\delta^F V|(B_r(x))+C\mu'(r)$, we get
$$\mu(r)^{(k-1)/k}\le C|\delta^FV|(B_r(x))+C\mu'(r)\le C_x\mu(r)^\alpha+C(n,k,F)\mu'(r)$$
for a.e.\ $r\in(0,1)$, where $\alpha:=1-\frac1p$ and $C_x$ depends on $n,k,F$ and also on (an upper bound on) $\|f\|_{L^p(B_r(x),|V|)}$, where $|\delta^FV|=f|V|$.
Since $x\in\operatorname{spt}(|V|)$, we have $\mu(r)>0$ and hence
$$(\mu(r)^{1/k})'\ge \frac{c-C_x'\mu(r)^\beta}{k}$$
for a.e.\ $r\in(0,1)$,
where $c=C(n,k,F)^{-1}$ and $\beta:=\alpha-(1-\frac1k)>0$. As long as $C_x'r^{\beta k}<c/2$, we either have
$\mu(r)\ge r^k$ or $\frac{c-C_x'\mu(\rho)^\beta}{k}\ge\frac{c}{2k}$ for all $\rho\in(0,r)$. In the latter case, we obtain $\mu(r)^{1/k}\ge\frac{cr}{2k}$, giving the claim in both cases (for a different $c>0$).
The upper semicontinuity of the support along converging sequences is a direct consequence.
\end{proof}

\begin{proposition}\label{functional}
    The Michael--Simon bound \cref{ms} is equivalent to its functional version
    \cref{ms.f}.
\end{proposition}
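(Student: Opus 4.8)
The plan is to prove the two implications in \cref{functional} separately. The implication from \cref{ms.f} to \cref{ms} is the soft one: given $V$ rectifiable with $\Theta^k(|V|,\cdot)\ge\theta_0$ a.e., $|V|(\R^n)<\infty$ and $|\delta^FV|(\R^n)<\infty$, I would test \cref{ms.f} with cut-offs $f_R\in C^1_c(\R^n)$ satisfying $0\le f_R\le1$, $f_R\equiv1$ on $B_R(0)$, $\operatorname{spt}f_R\subseteq B_{2R}(0)$ and $|df_R|\le C/R$. Then $\int|df_R|\,d|V|\le\frac CR|V|(\R^n)\to0$, while by dominated convergence $\int|f_R|^{k/(k-1)}\,d|V|\to|V|(\R^n)$ and $\int|f_R|\,d|\delta^FV|\to|\delta^FV|(\R^n)$; passing to the limit and raising to the $k$-th power yields \cref{ms}.

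For the converse, from \cref{ms} to \cref{ms.f}, I would first reduce to $f\ge0$: applying the (to-be-proved) nonnegative case to $f_\epsilon:=\sqrt{f^2+\epsilon^2}-\epsilon$, which lies in $C^1_c(\R^n)$, is nonnegative, satisfies $0\le f_\epsilon\le|f|$ and $|df_\epsilon|\le|df|$, and converges uniformly to $|f|$, and then letting $\epsilon\to0$ (using $|V|(\operatorname{spt}f)<\infty$ and $|\delta^FV|(\operatorname{spt}f)<\infty$), recovers \cref{ms.f} for arbitrary $f\in C^1_c(\R^n)$. So fix $f\ge0$. The heart of the matter is a level-set analogue of \cref{trunc}: for a.e.\ $t>0$ the truncation $V_t:=\uno_{\{f>t\}}V$ --- a rectifiable $k$-varifold, being the restriction of $V$ to the open set $\{f>t\}$, with $\Theta^k(|V_t|,\cdot)\ge\theta_0$ a.e.\ and finite mass --- satisfies
\[
|\delta^FV_t|(\R^n)\le|\delta^FV|(\{f>t\})+C(n,k,F)\,m'(t),\qquad m(t):=\int_{\{0<f\le t\}}|df|\,d|V|.
\]

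To prove this bound I would approximate $\uno_{\{f>t\}}$ by $\psi_h(f)$ with $\psi_h(s):=\min\{1,(s-t)^+/h\}$, $h\downarrow0$. Since $d(\psi_h(f)X)=X\otimes(\psi_h'(f)\,df)+\psi_h(f)\,dX$, formula \cref{fv.bf} gives, for every $X\in C^1_c(\R^n,\R^n)$,
\[
\ang{\delta^F(\psi_h(f)V),X}=\ang{\delta^FV,\psi_h(f)X}-\int_{\R^n\times\gr}\ang{B_F(S),X\otimes(\psi_h'(f)\,df)}\,dV.
\]
Here the left-hand side equals $\int_{\R^n\times\gr}\psi_h(f)\ang{B_F(S),dX}\,dV$ and hence converges to $\ang{\delta^FV_t,X}$ by dominated convergence; the first term on the right is bounded by $\|X\|_{C^0}|\delta^FV|(\{f>t\})$ because $0\le\psi_h(f)\le\uno_{\{f>t\}}$; and the second is bounded by $C(n,k,F)\|X\|_{C^0}\,h^{-1}\!\int_{\{t<f\le t+h\}}|df|\,d|V|=C\|X\|_{C^0}\tfrac{m(t+h)-m(t)}{h}$, which tends to $C\|X\|_{C^0}m'(t)$ for a.e.\ $t$ since $m$ is nondecreasing (hence a.e.\ differentiable). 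Taking the supremum over $X\in C^1_c(\R^n,\R^n)$ with $\|X\|_{C^0}\le1$ and letting $h\to0$ yields the displayed estimate (and in particular shows $\delta^FV_t$ is a finite measure).

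Now I would apply \cref{ms} to $V_t$ for a.e.\ $t>0$, obtaining $|V|(\{f>t\})^{(k-1)/k}\le(C/\theta_0)^{1/k}\bigl(|\delta^FV|(\{f>t\})+C\,m'(t)\bigr)$, and integrate over $t\in(0,\infty)$. On the left, writing $f=\int_0^\infty\uno_{\{f>t\}}\,dt$ and applying Minkowski's integral inequality in $L^{k/(k-1)}(|V|)$ gives $\bigl[\int f^{k/(k-1)}\,d|V|\bigr]^{(k-1)/k}\le\int_0^\infty|V|(\{f>t\})^{(k-1)/k}\,dt$. On the right, Tonelli's theorem gives $\int_0^\infty|\delta^FV|(\{f>t\})\,dt=\int f\,d|\delta^FV|$, and monotonicity of $m$ gives $\int_0^\infty m'(t)\,dt\le m(\infty)\le\int|df|\,d|V|$. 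Combining these and collecting constants into a single $C'(n,k,F)/\theta_0^{1/k}$ yields \cref{ms.f} for $f\ge0$, hence in general. I expect the only genuinely delicate point to be the level-set truncation estimate: one must ensure that the perimeter-type error created by cutting the varifold along $\{f=t\}$ integrates in $t$ to something controlled by $\int|df|\,d|V|$ rather than by the mass of the level sets --- which is exactly what the a.e.\ derivative $m'(t)$ of the monotone function $m$ accomplishes --- whereas the remaining ingredients (layer-cake, Minkowski's inequality, dominated convergence, rectifiability of a restriction) are routine.
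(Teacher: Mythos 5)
Your proof is correct and follows essentially the same route as the paper: truncate $V$ on superlevel sets of the test function, bound the first variation of the truncation by $|\delta^FV|(\{|f|>t\})$ plus (a constant times) the a.e.\ derivative of the monotone function $t\mapsto\int_{\{|f|>t\}}|df|\,d|V|$, apply \cref{ms} to each truncation, and integrate in $t$ --- your layer-cake/Minkowski step is exactly the paper's ``well-known bound''. The paper leaves the easy converse implicit, which you spell out with cut-offs; that and the smoothing/reduction to $f\ge0$ are harmless additions.
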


\begin{proof}
    To see that \cref{ms} implies \cref{ms.f}, note that a simple cut-off argument as in the previous proof shows that, for a.e.\ $t>0$,
$V':=\uno_{\{|f|>t\}}V$ has
$$|\delta^FV'|(\R^n)\le |\delta^FV|(\{|f|>t\})-Ch'(t),\quad h(t):=\int_{\{|f|>t\}}|df|\,d|V|,$$
for a possibly different $C$. Assuming for simplicity $\theta_0=1$ and applying \cref{ms} to $V'$, we get
$$m(t)^{(k-1)/k}\le C|\delta^FV|(\{|f|>t\})-Ch'(t),\quad m(t):=|V|(\{|f|>t\}),$$
which gives \cref{ms.f} thanks to the well-knwon bound $(\int_0^\infty m(t)t^{p-1}\,dt)^{1/p}
\le p^{-1/p}\int_0^\infty m(t)^{1/p}\,dt$ for any decreasing $m(t)$
and $p\in[1,\infty)$ (we take $p:=k/(k-1)$).
\end{proof}

Let us now show \cref{sumset}, from which \cref{equiv} will follow quite easily.

\begin{proof}[Proof of \cref{sumset}]
    Up to a subsequence, we can assume that $|\delta^FV_i|\weakto\nu$ for a suitable Radon measure $\nu$.
    For $|V|$-a.e.\ $x$, the rectifiable varifold $V$ admits an approximate tangent plane
    and there exists $C>0$, depending on $x$, such that\footnote{Recall that, for any two Radon measures \(\nu,\mu\ge0\), we have \[\limsup_{r \to 0} \frac{
    \nu(B_r(x))}{\mu(B_r(x))}<\infty
    \quad \text{for \(\mu\)-a.e.\ \(x\).}\]}
    $$\nu(B_r(x))\le C|V|(B_r(x))\quad\text{for all }r\in(0,1).$$
    Take any such point $x_0$; up to a translation and a rotation, we can assume that
    $x_0=0$ and the tangent plane is $\theta$ times $P=\operatorname{span}\{e_1,\dots,e_k\}$,
    for some constant $\theta>0$.

    By a straightforward diagonal argument, we can find a sequence of radii $r_i\to0$
    such that, denoting by $W_i$ the varifolds dilated by a factor $r_i^{-1}$ and $\tilde E_i:=r_i^{-1}E_i$, we have
    $$W_i\weakto\theta P,\quad |W_i|(\tilde E_i)\to0,$$
    where $P$ is identified with the corresponding multiplicity one varifold.
    To conclude, it suffices to show that $\theta=\lim_{i\to\infty}v_i$ for suitable $v_i\in S(\mathcal{D}_i)$.
    Clearly, it is enough to check that this holds along a subsequence.
    
    Note that
    \begin{equation}\label{resc.f.v}|\delta^FW_i|(B_1(0))=r_i^{1-k}|\delta^FV_i|(B_{r_i}(0))\le Cr_i^{1-k}|V_i|(B_{r_i}(0))
    =Cr_i|W_i|(B_1(0))\to0.\end{equation}
    Moreover, for each $i\in\N$ we can find $\rho_i\in(1/2,3/4)$ such that
    $\frac{d}{d\rho}|W_i|(B_\rho(0))|_{\rho=\rho_i}\le C|W_i|(B_1(0))\le C,$
    so that by \cref{trunc} the varifold $W_i':=\uno_{B_{\rho_i}(0)}W_i$ has $|\delta^FW_i'|(\R^n)\le C$.
    We can now apply \cite[Lemma 3.2]{DDG} to the sequence $(W_i')$ (with $F_i:=F$) and deduce the
    strong convergence
    $$(\pi_P)_*|W_i'|\to f\mathcal{H}^k\mrestr P,$$
    for some $f\in L^1(P)$ supported in the unit ball. Moreover, since $W_j\weakto\theta P$, we have
    $f=\theta$ on $P\cap B_{1/2}(0)$.
    
    Now, denoting by $T_xW_i'\in\gr$ the (normalized) approximate tangent plane (which exists $|W_i'|$-a.e.),
    let $J_i(x)\in[0,1]$ denote the Jacobian of the projection $\pi_P$ along $T_xW_i'$, namely
    $J_i(x)=\omega_k^{-1}\mathcal{H}^k(\pi_P(B_1(0)\cap T_xW_i'))$.
    By the varifold convergence $W_i\weakto\theta P$, we have
    $$\int_{B_1(0)}|J_i(x)-1|\,d|W_i'|\to0,$$
    so that
    $$(\pi_P)_*[J_i|W_i'|\mrestr(\R^n\setminus\tilde E_i)]\to f\mathcal{H}^k\mrestr P.$$
    Since $\Theta^k(|W_i'|,x)\in\mathcal{D}_i$ for $|W_i'|$-a.e.\ $x\in\R^n\setminus\tilde E_i$,
    by the area formula we have
    $$(\pi_P)_*[J_i|W_i'|\mrestr(\R^n\setminus\tilde E_i)]=f_i\mathcal{H}^k\mrestr P$$
    for some $f_i\in L^1(P)$ taking values in $S(\mathcal{D}_i)$ a.e.\ (as $\inf\mathcal{D}_i>0$ for all $i$).
    Since $f_i\to\theta$ strongly in $L^1(B_{1/2}(0))$, the claim follows.
\end{proof}

Finally, let us turn to the equivalence between Michael--Simon and compactness of rectifiable varifolds,
namely \cref{equiv}.

\begin{proof}[Proof of \cref{equiv}]
    Let us first check that (i) implies (ii): let us then assume that $V_i\weakto V$ is a sequence as in (ii), with $\theta_0=1$. Given a point $p\in\operatorname{spt}(|V_i|)$, if $|\delta^FV_i|(B_r(p))\le\Lambda|V_i|(B_r(p))$ for all $r\in(0,1/2)$ then we claim that \cref{ms} gives $c(n,k,F,\Lambda)>0$ such that
    \begin{equation}\label{linf.lb}
        |V_i|(B_r(p))\ge cr^k\quad\text{for all }r\in(0,1/2).
    \end{equation}
    Indeed, letting $\mu(r):=|V_i|(B_r(p))$,
	whenever the classical derivative $\mu'(r)$ exists, by \cref{trunc} the varifold $V_i':=\uno_{B_r(p)}V_i$ has total first variation bounded by $\Lambda\mu(r)+C(n,k,F)\mu'(r)$.
	Now, using \cref{main} for $V_i'$ (which can be viewed as a varifold in $\R^n$ even when $M=\mathbb{T}^n$, as $B_{1/2}(p)\subset\mathbb{T}^n$ is isometric to $B_{1/2}(0)\subset\R^n$), we deduce that
	\begin{align*}
		&\mu(r)^{(k-1)/k}\le \Lambda \mu(r)+C\mu'(r)\quad\text{for a.e.\ }r\in(0,1/2),
	\end{align*}
    which easily gives \cref{linf.lb} (see also the proof of \cref{usc}).
    
	We now repeat the first part of the proof of \cite[Theorem 40.6]{Simon}.  Let $K:=\bar B_R(0)$ and call $S_{i,\ell}\subseteq K$ the set of points $p\in \operatorname{spt}(|V_i|)\cap K$ such that $|\delta^F V_i|(B_r(p))\le\ell|V_i|(B_r(p))$ for all $0<r<1/2$.
	Since $\sup_i|\delta^F V_i|(B_{R+1})<\infty$, using Besicovitch's covering lemma it is immediate to check that $|V_i|(K\setminus S_{i,\ell})\to 0$ as $\ell\to\infty$, uniformly in $i$.
	
	It follows that, letting $S_\ell$ denote the (relatively closed) set of points $q$ in $B_R(0)$ such that $q=\lim_{j\to\infty}p_{i_j}$, for some sequence $p_{i_j}\in S_{i_j,\ell}$ (and some subsequence $i_j\to\infty$), we have
	$$\lim_{\ell\to\infty}|V|(B_R(0)\setminus S_\ell)=0.$$
	Indeed, for any compact $K'\subseteq B_R(0)\setminus S_\ell$ there exists a small $\rho>0$ such that
    $S_{i,\ell}\cap B_\rho(K')=\emptyset$ for $i$ large enough, yielding $|V|(K')\le\liminf_{i\to\infty}|V_i|(B_\rho(K')\setminus S_{i,\ell})$ and thus $|V|(B_R(0)\setminus S_\ell)\le\liminf_{i\to\infty}|V_i|(K\setminus S_{i,\ell})\to0$
    as $\ell\to\infty$.
	
    On the other hand, given $q\in S_\ell$, \cref{linf.lb} gives $|V|(\bar B_r(q))\ge c(n,k,F,\ell)r^k$ for all $r\in(0,1/2)$, and thus
    $$\Theta^k_*(|V|,q)>0\quad\text{for all }q\in\bigcup_\ell S_\ell.$$
    Since the complement of $\bigcup_\ell S_\ell$ is $|V|$-negligible, by \cite[Theorem 1.2]{DDG}
    the varifold $V$ is rectifiable. Moreover, by \cref{sumset} it has density $\ge1$ at $|V|$-a.e.\ point, as desired.

    It is obvious that (ii) implies (iv). Moreover, we claim that (iv) implies (iii):
    if we have a bad sequence as in (iii), then by averaging we can find $c_1^i,\dots,c_n^i\in\R/\Z$
    such that $|V_i|(\pi_j^{-1}(B_r(c_j^i)))\le 4r$ for any $r$ small enough (depending on $i,j$),
    where $\pi_j:\mathbb{T}^n\to\mathbb{T}^1$ is the projection to the $j$-th coordinate.
    Up to a translation, we can assume that $c^i_j=0$. Then, lifting $V_i$ to a periodic varifold $\tilde V_i$
    in $\R^n$, the truncated varifolds $\uno_{(0,1)^n}\tilde V_i$ have uniformly bounded first variation
    (by the same argument of \cref{trunc}) and satisfy all the other conclusions of a bad sequence in (iv).

    Finally, let us see that (iii) implies (i). Assuming by contradiction that \cref{ms} fails,
    we will construct a bad sequence as in (iii). Fix a finite $\alpha>\frac{n+1}{k}$; since \cref{ms} is scale-invariant, if it fails then
    there exists a sequence $(V_i)$ of rectifiable $k$-varifolds with $\Theta^k(|V_i|,x)\ge1$ for $|V_i|$-a.e.\ $x$ and
    $$|V_i|(\R^n)=1,\quad |\delta^FV_i|(\R^n)\le\epsilon_i^\alpha,$$
    for a vanishing sequence $0<\epsilon_i\le1/i$.
    Dilating $V_i$ by a factor $\epsilon_i^{(n+1)/k}<1$ and multiplying the resulting varifold by $\epsilon_i^{-1}$,
    we obtain a new sequence of varifolds $W_i$ satisfying
    $$|W_i|(\R^n)=\epsilon_i^n,\quad|\delta^FW_i|(\R^n)\le\epsilon_i^\beta,\quad\Theta^k(|W_i|,x)\ge\epsilon_i^{-1}\ge i\text{ for $|W_i|$-a.e.\ $x$},$$
    where $\beta:=\alpha+\frac{(n+1)(k-1)}{k}-1>n$.
    Finally, assuming without loss of generality that $\epsilon_i=\frac{1}{\ell_i}$ for some integer $\ell_i>1$, we define
    $$\tilde Z_i:=\sum_{c\in\Z^n}(W_i+\epsilon_ic)$$
    where, with a slight abuse of notation, we denote by $V_i+a$ the translation of $V_i$ by $a\in\R^n$.
    It is easy to check that finite partial sums have locally uniformly bounded mass, so that the series does indeed define a $k$-varifold in $\R^n$.
    This varifold is $\Z^n$-periodic and thus is the lift of a varifold $V_i$ on $\mathbb{T}^n$, which has
    $$|Z_i|(\mathbb{T}^n)=\epsilon_i^{-n}|W_i|(\R^n),\quad|\delta^F Z_i|(\mathbb{T}^n)\le\epsilon_i^{-n}|\delta^F W_i|(\R^n)$$
    and hence
    $$|Z_i|(\mathbb{T}^n)=1,\quad|\delta^F Z_i|(\mathbb{T}^n)\le\epsilon_i^{\gamma},\quad\gamma:=\beta-n>0,$$
    as well as $\Theta^k(|Z_i|,x)\ge i$ at $|Z_i|$-a.e.\ $x$.
    Since in fact $Z_i$ is $\epsilon_i\Z^n$-periodic, it is clear that it converges to a varifold of the form
    $Z=\mathcal{L}^n\otimes\lambda$, with $\lambda\in\mathcal{P}(\gr)$, along a subsequence.
\end{proof}

\section{Nonlinear inequalities for vector fields on the plane}\label{sec.nonlinear}

The main goal of this section is to prove the following new nonlinear inequality for vector fields on the plane, stated again for the reader's convenience.


\begin{thm}\label{main.tw}
	Given two vector fields $S,T\in W^{1,1}(\R^2,\R^2)$, we define
	\begin{align}\label{sp.sn}
		\begin{aligned}
		S^P&:=(S^x-|S^y|)^+,\quad S^N:=(S^x-|S^y|)^-, \\
		T^P&:=(T^y-|T^x|)^+,\quad T^N:=(T^y-|T^x|)^-.
		\end{aligned}
	\end{align}
    Also, let $\chi:\R^2\to[0,1]$ be a Borel function supported in a bounded set. 
    Then, for some universal constants $C,C'>0$, we have
	\begin{align}\label{kak.bis}
        \begin{aligned}
		\int_{\R^2} \chi \min\{S^P,T^P\}
		&\le C\|\chi\|_{L^2}\Big[\int_{\R^2}(|S|+\lvert \dive S\rvert)\Big]^{1/2}\Big[\int_{\R^2}(|T|+\lvert \dive T\rvert)\Big]^{1/2} \\
		&\quad+C\int_{\R^2} (\lvert \dive S\rvert+\lvert \dive T\rvert)
        +C'\int_{\R^2}(S^N+T^N).
        \end{aligned}
	\end{align}
    The same holds if $S^x,S^y,T^x,T^y,\dive S,\dive T$ are just real-valued measures on the plane with finite total variation. 
    Moreover, if $S^x,T^y\ge0$ then we can take $C'=1$.
\end{thm}

\begin{rmk}
    In the general case of measures, quantities such as $\int_{\R^2}|S|$ should be interpreted as $|S|(\R^2)$ and $S^P,S^N,T^P,T^N$ are defined by the same formulas \cref{sp.sn}
    (equivalently, writing $S=\sigma|S|$ for a unit-valued $\sigma$, we have $S^P=(\sigma^x-|\sigma^y|)^+|S|$). Recall also that $\min\{\mu,\nu\}:=(\mu+\nu-|\mu-\nu|)/2$
    for two real-valued measures $\mu,\nu$ (equivalently, writing $\mu=f(|\mu|+|\nu|)$ and
    $\nu=g(|\mu|+|\nu|)$, we have $\min\{\mu,\nu\}=\min\{f,g\}(|\mu|+|\nu|)$).
\end{rmk}

While \cref{kak.bis} is not scale-invariant, taking an arbitrary $\chi$ such that $\chi=0$ $\mathcal{L}^2$-a.e.\ and applying the bound to all rescalings of $S,T$, we immediately obtain the following,
which can be seen as a special case of \cite[Proposition 8.6]{ACP}.

\begin{corollary}
    Under the same assumptions, denoting by $\mathfrak{S},\mathfrak{T}$ the singular parts of the measures $S,T$, we have
    $$\int_{\R^2} \min\{\mathfrak{S}^P,\mathfrak{T}^P\}\le C\int_{\R^2}(S^N+T^N).$$
    In particular, if $S^N=T^N=0$ then the two measures $\mathfrak{S}^P$ and $\mathfrak{T}^P$
    are mutually singular.
\end{corollary}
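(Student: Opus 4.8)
The plan is to apply \cref{main.tw} to stretched copies of $S$ and $T$, fed with a cut-off $\chi$ equal to the indicator of a Lebesgue-null set: this makes the entire first term on the right-hand side of \cref{kak.bis} vanish identically, while the stretching sends the divergence terms in the second line to zero in the limit.

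First I fix the Lebesgue decompositions $S=S^{ac}+\mathfrak{S}$ and $T=T^{ac}+\mathfrak{T}$. A preliminary remark is that $S^P\ge\mathfrak{S}^P$ and $T^P\ge\mathfrak{T}^P$ as (nonnegative) measures: writing $|S|=|S^{ac}|+|\mathfrak{S}|$ and noting that the polar vector $\sigma=dS/d|S|$ agrees with $d\mathfrak{S}/d|\mathfrak{S}|$ on the Lebesgue-null carrier $B$ of $\mathfrak{S}$, one has $S^P=\mathfrak{S}^P+(S^P\mrestr(\R^2\setminus B))$ with the second summand $\ge 0$. Next pick a bounded Borel set $N$ with $\mathcal{L}^2(N)=0$; the general (unbounded) case will follow by running the argument with $N\cap B_R(0)$ and letting $R\to\infty$, since $\min\{\mathfrak{S}^P,\mathfrak{T}^P\}$ is concentrated on some Lebesgue-null set, which we may take $N$ to contain. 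For $\lambda>0$ let $\mu_\lambda(x):=\lambda x$ and set $S_\lambda:=(\mu_\lambda)_*S$, $T_\lambda:=(\mu_\lambda)_*T$, $\chi_\lambda:=\uno_{\mu_\lambda(N)}$. Since $\mu_\lambda$ is a Borel bijection, the pointwise algebraic operations defining $S^P,S^N$, as well as $\min\{\cdot,\cdot\}$ on nonnegative measures, all commute with $(\mu_\lambda)_*$, and one has the scaling $\dive S_\lambda=\lambda^{-1}(\mu_\lambda)_*(\dive S)$. Consequently $S_\lambda^x=(\mu_\lambda)_*S^x\ge 0$ and $T_\lambda^y\ge 0$ (so that \cref{main.tw} applies), $\|\chi_\lambda\|_{L^2}^2=\mathcal{L}^2(\mu_\lambda(N))=0$, $|\dive S_\lambda|(\R^2)=\lambda^{-1}|\dive S|(\R^2)$ (and likewise for $T$), $S_\lambda^N(\R^2)=S^N(\R^2)$, $T_\lambda^N(\R^2)=T^N(\R^2)$, and
$$\int_{\R^2}\chi_\lambda\min\{S_\lambda^P,T_\lambda^P\}=\int_{\R^2}(\chi_\lambda\circ\mu_\lambda)\,d\min\{S^P,T^P\}=\min\{S^P,T^P\}(N).$$

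Plugging $S_\lambda,T_\lambda,\chi_\lambda$ into \cref{kak.bis}, the first term drops out because $\|\chi_\lambda\|_{L^2}=0$, leaving
$$\min\{S^P,T^P\}(N)\le C\lambda^{-1}\big(|\dive S|(\R^2)+|\dive T|(\R^2)\big)+S^N(\R^2)+T^N(\R^2).$$
Letting $\lambda\to\infty$ kills the divergence terms; using $S^P\ge\mathfrak{S}^P$, $T^P\ge\mathfrak{T}^P$ together with monotonicity of $\min$ for measures, and finally exhausting with $N\cap B_R(0)$ as above, I conclude $\int_{\R^2}\min\{\mathfrak{S}^P,\mathfrak{T}^P\}\le\int_{\R^2}(S^N+T^N)$ (in fact with constant $1$). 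The last assertion is then immediate: if $S^N=T^N=0$ the right-hand side vanishes, so the nonnegative measure $\min\{\mathfrak{S}^P,\mathfrak{T}^P\}$ is identically $0$, which by the definition of the minimum of two measures means exactly that $\mathfrak{S}^P$ and $\mathfrak{T}^P$ are mutually singular.

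I do not expect a genuine obstacle here: the substance is entirely contained in \cref{main.tw}, and the rest is bookkeeping. The one point deserving care is the scaling direction — $|\dive S_\lambda|(\R^2)=\lambda^{-1}|\dive S|(\R^2)$ (the divergence carries an extra negative homogeneity, while $(\mu_\lambda)_*$ preserves total mass), so one must send $\lambda\to\infty$, not $\lambda\to 0$. The remaining routine points are keeping $\chi_\lambda$ compactly supported, handled by the truncation $N\cap B_R(0)$ together with monotone convergence, and verifying the two elementary measure-theoretic facts used above, namely $S^P\ge\mathfrak{S}^P$ and the commutation of $\min$ with pushforward under the bijection $\mu_\lambda$.
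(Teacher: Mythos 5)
Your argument is correct and is essentially the paper's own proof: the paper likewise takes $\chi$ vanishing $\mathcal{L}^2$-a.e.\ (so the first term in \cref{kak.bis} drops) and applies the inequality to all rescalings of $S,T$, under which the divergence terms vanish while $S^N,T^N$ keep their mass. Your bookkeeping of the dilation direction, the passage from $S^P,T^P$ to $\mathfrak{S}^P,\mathfrak{T}^P$ via monotonicity of the measure-theoretic minimum, and the truncation to keep $\chi$ compactly supported are all sound (and even yield constant $1$).
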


Thus, in the last corollary, the assumption that $\dive S$ and $\dive T$ are finite measures is used only qualitatively.
Given $u\in BV(\R^2,\R^2)$ and taking
$$S:=(\de_yu^y,-\de_x u^y),\quad T:=(-\de_yu^x,\de_x u^x),$$
we have $\dive S=\dive T=0$. Taking $\varphi_r$ supported in a ball $B_r(p)$ and applying the previous bound to $\varphi_r S,\varphi_r T$, we obtain
\begin{align*}
&\int_{\R^2} \varphi_r \min\{(A_y^y-|A_x^y|)^+,(A_x^x-|A_y^x|)^+\}\,d|Du|^s\\
&\le C\int_{\R^2}\varphi_r[(A_y^y-|A_x^y|)^- + (A_x^x-|A_y^x|)^-]\,d|Du|,
\end{align*}
where $A$ (with rows $A^x,A^y$) is given by the polar decomposition $Du=A|Du|$ and $|Du|^s$ denotes the singular part. In particular, if $p$ is an approximate continuity point for $A$ and a point of density one for $|Du|^s$ (with respect to $|Du|$), taking $r\to0$ we see that $A(p)$ must be far from $I$. Since this must also hold if we compose $u$ with linear transformations, this immediately implies the following.

\begin{corollary}[Alberti's rank-one theorem \cite{Alberti,DR,MV}]
    Given $u\in BV(\R^2,\R^2)$ and writing $(Du)^s=A|Du|^s$, we have $\operatorname{rk}(A)=1$
    at $|Du|^s$-a.e.\ point.
    By a well-known slicing argument (see, e.g., \cite[Proposition 1.3]{DeLellis}), this implies that the same holds for a function $u\in BV(\R^n,\R^m)$ with any $m,n\ge1$.
\end{corollary}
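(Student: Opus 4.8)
The plan is to make explicit the argument indicated just before the statement, whose only substantial input is \cref{main.tw}. By the one-dimensional slicing reduction of \cite[Proposition 1.3]{DeLellis} it suffices to take $n=m=2$. Write $Du=A|Du|$ with rows $A^x=(A^x_x,A^x_y)$ and $A^y=(A^y_x,A^y_y)$, so $\de_j u^i=A^i_j|Du|$; since $|A|=1$ holds $|Du|$-a.e., we have $\operatorname{rk}A\ge1$, and it is enough to prove that $\det A=0$ at $|Du|^s$-a.e.\ point.

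\emph{Step 1: the differential inequality.} Put $S:=(\de_y u^y,-\de_x u^y)$ and $T:=(-\de_y u^x,\de_x u^x)$. These are $\R^2$-valued measures of finite total variation, and $\dive S=\dive T=0$ since $S,T$ are the $\pm\pi/2$ rotations of the curl-free measures $Du^y$, $Du^x$. Reading off \cref{sp.sn} gives $\mathfrak S^P=(A^y_y-|A^y_x|)^+|Du|^s$, $S^N=(A^y_y-|A^y_x|)^-|Du|$, and analogously $\mathfrak T^P=(A^x_x-|A^x_y|)^+|Du|^s$, $T^N=(A^x_x-|A^x_y|)^-|Du|$. For any $\varphi\in C^\infty_c(\R^2)$ with $0\le\varphi\le1$, the fields $\varphi S$, $\varphi T$ still have finite total variation, with divergences $\nabla\varphi\cdot S$, $\nabla\varphi\cdot T$ finite measures, so the corollary above (the form of \cref{main.tw} with vanishing $\chi$) applied to $\varphi S$, $\varphi T$ yields $\int\varphi\,g(A)\,d|Du|^s\le C\int\varphi\,h(A)\,d|Du|$, where
\[ g(a):=\min\{(a^y_y-|a^y_x|)^+,(a^x_x-|a^x_y|)^+\},\qquad h(a):=(a^y_y-|a^y_x|)^-+(a^x_x-|a^x_y|)^-. \]
As this holds for every such $\varphi$, we get $g(A)|Du|^s\le C\,h(A)|Du|$ as measures, and restricting both sides to the singular part gives the pointwise bound $g(A)\le C\,h(A)$ at $|Du|^s$-a.e.\ point. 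Note that $g$ and $h$ are continuous, nonnegative, $1$-homogeneous, with $g(I)=1>0=h(I)$.

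\emph{Step 2: composition with linear maps.} Fix $L,R\in\text{GL}(2)$ and let $v:=L\circ u\circ R^{-1}\in BV(\R^2,\R^2)$, whose polar at $R\tilde x$ is $\frac{LA(\tilde x)R^{-1}}{|LA(\tilde x)R^{-1}|}$; since $R$ is bi-Lipschitz it maps $|Du|^s$-null sets to $|Dv|^s$-null sets. Applying Step 1 to $v$ and using the $1$-homogeneity of $g,h$, we obtain $g(LA(p)R^{-1})\le C\,h(LA(p)R^{-1})$ for $|Du|^s$-a.e.\ $p$. Intersecting over a countable dense set of pairs $(L,R)$ and using continuity of $g,h$ (so that the bad set of pairs is open), we conclude that, for $|Du|^s$-a.e.\ $p$, this inequality holds for \emph{all} $L,R\in\text{GL}(2)$. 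Were $A(p)$ invertible, the choice $L=I$, $R=A(p)$ would give $1=g(I)\le C\,h(I)=0$, a contradiction. Hence $\det A(p)=0$, i.e.\ $\operatorname{rk}A(p)=1$, at $|Du|^s$-a.e.\ $p$; the general case of $m,n$ follows by slicing.

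The step I expect to be the main obstacle is justifying the hypothesis $S^x,T^y\ge0$ of \cref{main.tw} in Step 1, namely $A^y_y,A^x_x\ge0$ on $\operatorname{spt}\varphi$, which is not automatic. The natural remedy is to apply \cref{main.tw} to the restrictions of $\varphi S$ and $\varphi T$ to the (measure-theoretic) sets $\{A^y_y\ge0\}$ and $\{A^x_x\ge0\}$: this does not alter $\mathfrak S^P$ or $\mathfrak T^P$, which are already concentrated there, and only deletes mass from $S^N$, $T^N$, which only helps. What must then be checked is that the divergences of these restricted measures remain finite measures; this is the delicate point, which I would handle by a careful choice of the cut-out sets (of finite perimeter and not charged by $Du^x$, $Du^y$) or by a limiting/blow-up argument that exploits approximate continuity of $A$ to render the contribution of the wrong-sign region negligible.
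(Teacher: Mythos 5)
Your argument is essentially the paper's own: the paper deduces the corollary by taking exactly the same fields $S=(\de_yu^y,-\de_xu^y)$, $T=(-\de_yu^x,\de_xu^x)$, applying the singular-part consequence of \cref{main.tw} to $\varphi_r S,\varphi_r T$, and then invoking compositions with linear transformations, which is precisely your Steps 1--2; your measure-theoretic localization (restricting the measure inequality to a set carrying $|Du|^s$) and the countable dense family of pairs $(L,R)$ are clean implementations of what the paper compresses into ``approximate continuity point \dots taking $r\to0$'' and ``this must also hold if we compose $u$ with linear transformations''. Concerning the obstacle you flag at the end: the paper's own deduction applies the corollary to $\varphi_r S,\varphi_r T$ exactly as you do, without verifying the hypothesis $S^x,T^y\ge0$ and without the restriction device you contemplate, so on this point your attempt coincides with, rather than falls short of, the paper. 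Your caution is nonetheless legitimate as far as the literal statement of \cref{main.tw} goes: the sign hypothesis is automatic only in the case $S^N=T^N=0$ (if $S^N=0$ then $S^x\ge|S^y|\ge0$), whereas here $S^N,T^N$ are genuinely nonzero; the implicit point is that wherever $S^x<0$ one has $S^N\ge -S^x$ (the offending region is already charged to the right-hand side via \cref{sp.sn}), which is why the hypothesis is morally harmless, but an unsigned version of \cref{main.tw} is not stated or proved in the paper, and your proposed fix by restricting to $\{A^y_y\ge0\}$, $\{A^x_x\ge0\}$ does indeed run into the problem you identify, namely that restrictions of divergence-measure fields need not have measure divergence. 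In short: same route as the paper, with the one loose end you point out being left implicit by the paper as well.
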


Before turning to \cref{main.tw}, we will first obtain a simpler and perhaps more intuitive bound,
for vector fields obeying a certain nonlinear constraint.
Namely, we will derive the following sharp estimate, which was also proved in \cite[Theorem A]{GRS} (although with non-sharp constant).
We will give a full proof of it since our techniques are different and more readily adaptable to
give also \cref{main.tw} above.

\begin{thm}\label{det.simple}
	Given $S,T\in W^{1,2}(\R^2,\R^2)$, assume that $S^x,T^y\ge 0$ and $\det(S,T)\ge 0$ a.e. Then
	\begin{align*}
		&\int_{\R^2}\det(S,T)\le\frac{1}{4}\Big(\int_{\R^2}\lvert \dive S\rvert \Big)\Big(\int_{\R^2}\lvert \dive T\rvert\Big).
	\end{align*}	
\end{thm}

The intuition behind this result is that, by Smirnov's decomposition theorem for 1-dimensional currents
\cite{Smirnov}, the current associated to $S$ is a (weighted) superposition of curves $\gamma$. The boundary of each curve contributes $2$ to the total mass of the boundary, which is $\int_{\R^2}\lvert \dive S\rvert$, so that (informally) the weighted number of curves building up $S$ is $\mz\int_{\R^2}\lvert \dive S\rvert$, and similarly for $T$. On the other hand, the condition $\det(S,T)\ge 0$ forces a curve in $S$ and a curve in $T$ to meet at most once, and the integral of $\det(S,T)$ counts the (weighted) total number of intersections. This is best seen by looking at an example where $S,T\in BV(\R^2,\R^2)$ are supported on $\epsilon$-fattened curves, for $\epsilon$ small, and point along the curve; this example also shows that the bound is sharp, with equality achieved quite often. With this intuition in mind,
this bound can be seen as a functional version of the (trivial) planar case of the multilinear Kakeya inequality \cite{BCT,Guth}, extended to a situation where the tubes are not necessarily straight.

\vspace{5pt}
	\begin{center}
		\begin{overpic}[width=10cm]{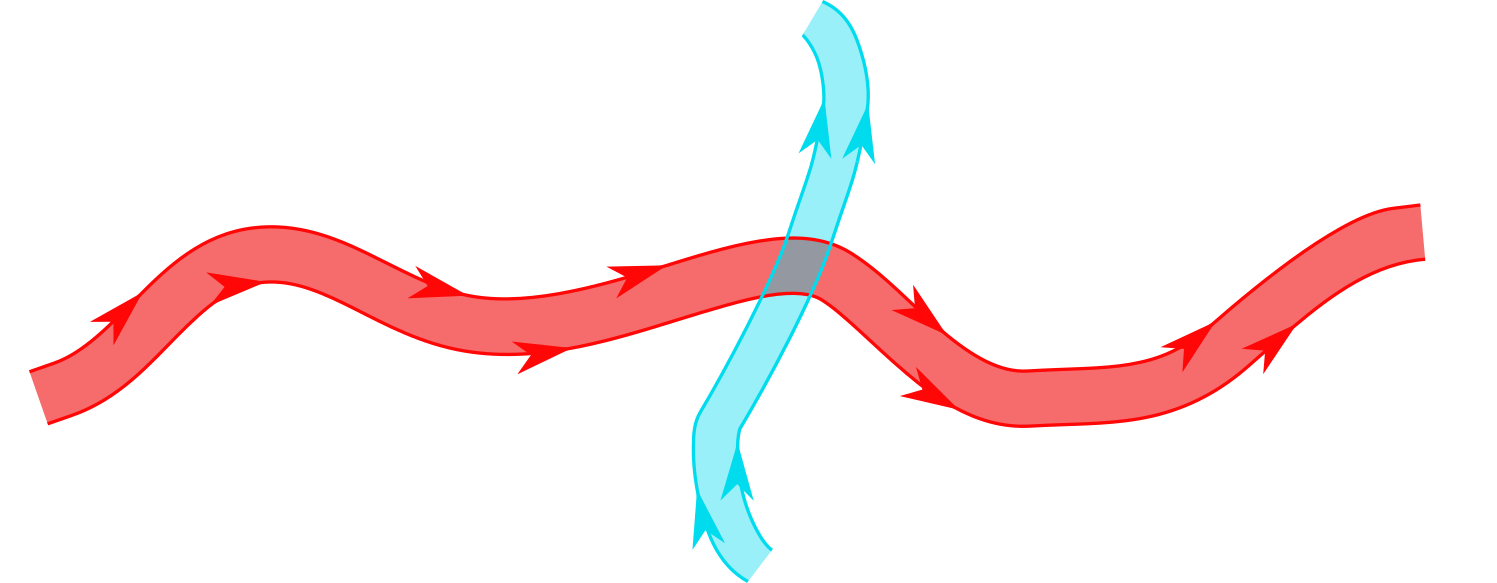}
			\put(20,25){{\color{red}$S$}}
			\put(60,30){{\color{cyan}$T$}}
		\end{overpic}
	\end{center}
\vspace{5pt}

\subsection{Proof of the simpler \cref{det.simple}}
\emph{Step 1.} In order to prove \cref{det.simple}, we first notice that we can reduce to the case of smooth, compactly supported vector fields. Indeed, we can assume that both $S,T$
are supported in a ball $B_R(0)$ and $|S|^2+|T|^2\le\Lambda^2$ for some $R,\Lambda>0$. Let us fix a nonnegative cut-off function $\psi\in C^\infty_c(\R^2)$
such that $\psi=1$ on $B_{R+1}(0)$. Replacing $S^x$ and $T^y$ with $S^x+\epsilon|S^y|+\epsilon\psi$
and $T^y+\epsilon|T_x|+\epsilon\psi$ (for $\epsilon>0$ small), respectively, we obtain perturbed vector fields (still denoted by $S,T$) such that
$$S^x\ge\epsilon|S|,\quad T^y\ge\epsilon|T|,$$
the angle between $S$ and $T$ is $\ge\epsilon'>0$
(at points where $S,T\neq0$), and
$$S^x,T^y\ge\epsilon\quad\text{on $B_{R+1}(0)$},$$
while $S,T$ are smooth on the complement of $B_R(0)$. In particular, we have $\det(S,T)\ge\epsilon''>0$
on $B_{R+1}(0)$.
Calling $A_x,A_y$ the two rows of a generic $2\times 2$ matrix $A$,
we select $\lambda,\mu\in(0,\epsilon)$ and $\nu\in(0,\epsilon'')$ giving a regular value for the function
$A\mapsto(A_x^x,A_y^y,\det(A))$, so that
$$\mathcal{M}:=\{A\,:\,A_x^x\ge\lambda,\ A_y^y\ge\mu,\ \det(A)\ge\nu\}$$
admits a $(1+C\delta)$-Lipschitz projection $\pi:B_\delta(\mathcal{M})\cap B_{2\Lambda}(0)\to\mathcal{M}$
for $\delta>0$ small; also, viewing $S,T$ as rows, we have $(S,T)\in\mathcal{M}$ on $B_{R+1}(0)$.

Next, we can apply the standard Schoen--Uhlenbeck trick: approximating the pair $(S,T)$ by a mollification with variable radius (leaving $(S,T)$ unchanged on the complement of $B_{R+1}(0)$), by the embedding $W^{1,2}\hookrightarrow VMO$ the resulting pair $(S',T')$ belongs to the domain of $\pi$ (on $B_{R+1}(0)$). We can then take $(S'',T''):=\pi(S',T')$ as a smooth perturbation
of $(S,T)$ (letting it equal to $(S',T')$ outside of $B_{R+1}(0)$) still satisfying the assumptions:
since
$$|(S'',T'')-(S',T')|\le C\delta,\quad|d(S'',T'')|^2\le (1+C\delta)|d(S',T')|^2,$$
the pair $(S'',T'')$ converges strongly in $W^{1,2}(\R^2)$ to $(S',T')$,
as $\delta\to0$.

\emph{Step 2.} Assuming henceforth that $S,T\in C^\infty_c(\R^2,\R^2)$, we claim that, up to another small perturbation, we can assume $S=\alpha Z$ and $T=\beta W$, for smooth, complete vector fields $Z,W$ and coefficients $\alpha,\beta\in C^\infty_c(\R^2)$ such that
\begin{align}\label{positive}
	&\alpha\ge0,\quad\beta\ge0,\quad\dive Z=\dive W=0,\quad Z^x>0,\quad W^y>0,\quad\det(Z,W)>0.
\end{align}
Indeed, given $\epsilon>0$ and a nonnegative cut-off function $\phi\in C^\infty_c(\R^2)$ with $\phi=1$ near the supports of $S$ and $T$, we can let $S':=\phi\tilde S$ and $T':=\phi\tilde T$, with $\tilde S:=S+\epsilon\de_x$ and $\tilde T:=T+\epsilon\de_y$.
Since $S',T'$ are arbitrarily close to $S,T$ in the smooth topology, it is enough to prove the inequality for these new vector fields.

Also, we can write $\tilde S=\tilde\alpha Z$ for a vector field $Z$ with $\dive Z=0$ and $\tilde\alpha$ a smooth positive function. Indeed, the plane is foliated by the integral curves of $\tilde S$ starting on the vertical axis $\{0\}\times\R$, since $\tilde S^x>0$ and, outside of a compact set, $\tilde S=\epsilon\de_x$. Hence, we can let $\alpha':=1$ on the vertical axis $\{0\}\times\R$ and solve the equation $\dive(\tilde\alpha^{-1}\tilde S)=0$ along all the integral curves $\gamma:\R\to\R^2$, where it becomes the ordinary differential equation
\begin{align*}
	&\frac{d}{dt}(\tilde\alpha\circ\gamma)(t)=(\tilde\alpha\dive\tilde S)\circ\gamma(t).
\end{align*}
Now, letting $\alpha:=\phi\tilde\alpha$ and $\beta:=\phi\tilde\beta$, we arrive at $S'=\alpha Z$ and $T'=\beta W$, as desired. Note that \cref{positive} also holds.
In the sequel, we replace $S$ and $T$ with the approximations $S'$ and $T'$.

\emph{Step 3.} It is convenient to further approximate $S$ and $T$ with piecewise divergence-free vector fields, as follows.
For $\tau>0$, let $\gamma_j:\R\to\R^2$ be the integral curve of $Z$ with $\gamma_j(0)=(0,j\tau)$, for $j\in\Z$. Together with the vertical lines $\ell_k:=\{k\tau\}\times\R$, these curves split the plane into a family of (open) regions $\mathcal{P}_\tau:=(\reg_{jk})_{j,k\in\Z}$ diffeomorphic to the unit square, where $\reg_{jk}$ is bounded by $\gamma_j$, $\gamma_{j+1}$, $\ell_k$, and $\ell_{k+1}$.

	\vspace{7pt}
	\begin{center}
		\begin{overpic}[width=8cm]{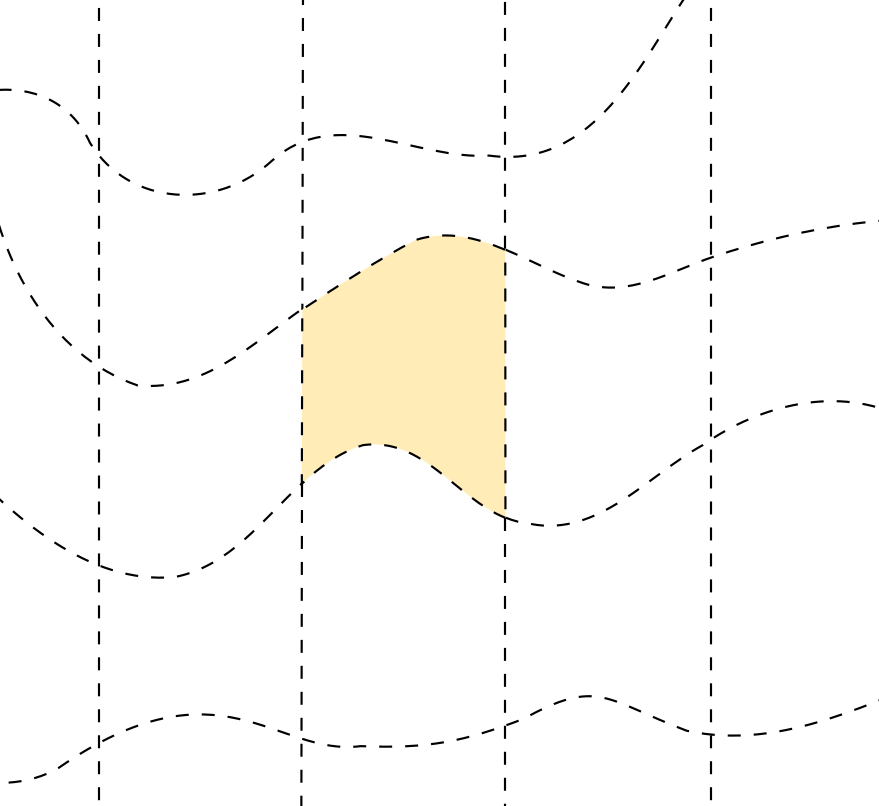}
			\put(36,87){$\ell_k$}
            \put(59,87){$\ell_{k+1}$}
            \put(0,38){$\gamma_{j}$}
			\put(0,67){$\gamma_{j+1}$}
			\put(42,49){{\color{orange}$\reg_{jk}$}}
		\end{overpic}
	\end{center}
	\vspace{7pt}

For each region $\reg=\reg_{jk}\in\mathcal{P}_\tau$, we denote $\de_L\reg:=\bar\reg\cap\ell_k$ and $\de_R\reg:=\bar\reg\cap\ell_{k+1}$ the left and right sides of the boundary of $\reg$.
We then let
\begin{align*}
	&S_\tau:=\sum_{\reg\in\mathcal{P}_\tau}\alpha_{\reg}\uno_\reg Z,
\end{align*}
with $\alpha_{\reg}\ge 0$ a constant chosen so that the flow of $\alpha_{\reg} Z|_\reg$ across $\de_L\reg$ equals the flow of $S|_\reg$ across the same segment.
Note that $S_\tau$ belongs to $BV(\R^2,\R^2)$ and that its divergence is a measure supported on the vertical lines $\bigcup_k\ell_k$, since $Z$ is divergence-free and parallel to the curves $\gamma_j$.

\emph{Step 4.} Let us show a simple fact.

\begin{lemmaen}
	The total variation of $\dive S_\tau$ on $\R^2$ is bounded by $\int_{\R^2}\lvert \dive S\rvert$.
\end{lemmaen}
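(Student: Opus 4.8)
The plan is to combine the explicit structure of $\dive S_\tau$ with the flux-conservation of the divergence-free field $Z$, reducing $|\dive S_\tau|(\R^2)$ to a region-by-region estimate against $|\dive S|$.

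First I would locate $\dive S_\tau$. In the interior of each region $\reg\in\mathcal{P}_\tau$ we have $S_\tau=\alpha_{\reg}Z$ with $\alpha_{\reg}$ constant and $\dive Z=0$, so $\dive S_\tau=0$ there; across each arc of a curve $\gamma_j$ the field $Z$, and hence $S_\tau$ on either side, is tangent, so both one-sided normal traces of $S_\tau$ vanish and no singular divergence is created along $\gamma_j$. Thus $\dive S_\tau$ is carried by $\bigcup_k\ell_k$, and an integration by parts on each $\reg$ (using $\dive Z=0$) shows that on the common edge $\de_R\reg_{jk}=\de_L\reg_{j,k+1}$ it equals $(\alpha_{\reg_{j,k+1}}-\alpha_{\reg_{jk}})Z^x\,\mathcal{H}^1\mrestr\de_R\reg_{jk}$, the $Z^x$ being the jump of $S_\tau^x$ across that edge in the $+e_1$ direction. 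Since these edges are mutually disjoint and exhaust $\bigcup_k\ell_k$ up to an $\mathcal{H}^1$-null set,
\[
|\dive S_\tau|(\R^2)=\sum_{j,k}\bigl|\alpha_{\reg_{j,k+1}}-\alpha_{\reg_{jk}}\bigr|\int_{\de_R\reg_{jk}}Z^x\,d\mathcal{H}^1.
\]

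Next I would unwind the definition of the constants $\alpha_{\reg}$. Writing $\Phi_{\reg}:=\int_{\de_L\reg}S^x\,d\mathcal{H}^1$ for the flux of $S$ across the left edge in the $+e_1$ direction and $c_{\reg}:=\int_{\de_L\reg}Z^x\,d\mathcal{H}^1>0$, the defining property of $\alpha_{\reg}$ reads $\alpha_{\reg}=\Phi_{\reg}/c_{\reg}$. Because $Z$ is divergence-free and tangent to the $\gamma_j$'s, its flux across $\de_L\reg$ and across $\de_R\reg$ coincide; iterating this along a fixed tube $\{\reg_{jk}\}_k$ (and using $\de_L\reg_{jk}=\de_R\reg_{j,k-1}$) shows that $c_{\reg_{jk}}$ is independent of $k$, say $c_{\reg_{jk}}=c_j>0$. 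Applying the divergence theorem to $S=\alpha Z$ on $\reg_{jk}$, where the flux through the $\gamma$-sides vanishes by tangency, gives $\int_{\de_R\reg_{jk}}S^x\,d\mathcal{H}^1=\Phi_{\reg_{jk}}+\int_{\reg_{jk}}\dive S$; as the left-hand side is exactly $\Phi_{\reg_{j,k+1}}$, this yields $\alpha_{\reg_{j,k+1}}-\alpha_{\reg_{jk}}=\tfrac1{c_j}\int_{\reg_{jk}}\dive S$. Plugging this and $\int_{\de_R\reg_{jk}}Z^x\,d\mathcal{H}^1=c_j$ into the display above makes the $(j,k)$-term equal to $\bigl|\int_{\reg_{jk}}\dive S\bigr|\le\int_{\reg_{jk}}|\dive S|$, and summing over $j,k$ gives $|\dive S_\tau|(\R^2)\le\int_{\R^2}|\dive S|$.

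The two points requiring a little care --- and, I think, the only possible obstacles --- are the claim that $\dive S_\tau$ puts no mass off $\bigcup_k\ell_k$ (resting on the tangency of $Z$ to the $\gamma_j$ and on $\dive Z=0$) and the fact that all the sums above have only finitely many nonzero terms, so no telescoping/boundary remainder survives; the latter holds because $\alpha\in C^\infty_c$, hence $\alpha_{\reg}=0$ (and indeed $S\equiv0$ on $\reg$) for all but finitely many $\reg$. Neither is a genuine difficulty, and the substance of the proof is the flux bookkeeping just outlined.
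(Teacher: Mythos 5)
Your argument is correct and follows essentially the same route as the paper's proof: identify $\dive S_\tau$ as the jump measure on the vertical edges, use flux conservation of the divergence-free field $Z$ across each region, and convert the jump $\alpha_{\reg_{j,k+1}}-\alpha_{\reg_{jk}}$ into $\int_{\reg_{jk}}\dive S$ via the divergence theorem with tangency along the $\gamma_j$'s. You merely spell out explicitly (normal traces along $\gamma_j$, constancy of $c_j$, finiteness of the sums) what the paper leaves implicit.
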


\begin{proof}
	Given two adjacent regions $\reg$ and $\reg'$ with $\de_R\reg=\de_L\reg'$, note that $\alpha_{\reg'}$ is chosen so that
	\begin{align*}
		&\int_{\de_L\reg'}\alpha_{\reg'}Z^x=\int_{\de_L\reg'}S^x=\int_{\de_R\reg}S^x.
	\end{align*}
	Hence,
	\begin{align*}
		(\alpha_{\reg'}-\alpha_\reg)\int_{\de_L\reg'}Z^x
		&=\int_{\de_R\reg}S^x-\alpha_\reg\int_{\de_R\reg}Z^x \\
		&=\int_{\de_R\reg}S^x-\alpha_{\reg}\int_{\de_L\reg}Z^x \\
		&=\int_{\de_R\reg}S^x-\int_{\de_L\reg}S^x,
	\end{align*}
	where we used $\dive Z=0$ in the second equality. Recalling that $Z^x>0$, we get
	\begin{align*}
		&|\alpha_{\reg'}-\alpha_{\reg}|\int_{\de_L\reg'}Z^x
		=\Big|\int_{\de_R\reg}S^x-\int_{\de_L\reg}S^x\Big|
		\le\int_\reg\lvert \dive S\rvert.
	\end{align*}
	Since the total variation of $\dive S_\tau$ is the sum of the quantity in the left-hand side over all pairs of regions $\reg,\reg'$ with $\de_R\reg=\de_L\reg'$, the claim follows.
\end{proof}

By the previous lemma, since $S_\tau\to S$ pointwise as $\tau\to 0$, it is enough to prove \cref{det.simple} for $S_\tau$ and $T_\tau$, where $T_\tau$ is obtained in a similar way (interchanging $x$ and $y$), relative to a partition $\mathcal{Q}_\tau$ of the plane.


\emph{Step 5.} To motivate what follows, we make the following formal observation: since the coefficients in the approximation $S_\tau=\sum_\reg\alpha_\reg\uno_\reg Z$ are nonnegative, $S_\tau$ can be viewed as a superposition (with nonnegative coefficients) of indicator functions times $Z$, in such a way that the divergence of these terms sums up to $\dive S_\tau$ \emph{without cancellation}.

We now proceed to make this rigorous. Note that $\alpha_\reg=0$ for all but finitely many regions.
Given $\mu>0$ different from all the (finitely many) values $\alpha_\reg$, for each $j\in\Z$ consider a maximal chain of consecutive regions $\reg_{jk},\dots,\reg_{jk'}$ such that the associated constants satisfy $\alpha_\reg>\mu$, and let $\hat\reg$ be their union, which coincides with the region bounded by $\gamma_j$, $\gamma_{j+1}$, $\ell_k$, and $\ell_{k'+1}$ (up to negligible sets).


It is easy to check that the sum $s(\mu)$ of the total variations $\lvert \dive(\uno_{\hat\reg}Z)\rvert(\R^2)$, as $\hat\reg$ (and $j$) vary, equals the sum of $\int_{\de_L\reg'}Z^x$ for all pairs of adjacent regions $\reg,\reg'$ such that $\mu$ lies between $\alpha_{\reg}$ and $\alpha_{\reg'}$ (regardless of the order). Hence,
\begin{align*}
	&\int_0^\infty s(\mu)\,d\mu
	=\sum_{\reg,\reg'\,:\,\de_R\reg=\de_L\reg}|\alpha_\reg-\alpha_{\reg'}|\int_{\de_L\reg'}Z^x
	=\lvert \dive S_\tau\rvert(\R^2).
\end{align*}
The analogous sums $t(\nu)$ coincide for the vector field $W$, for a given value $\nu>0$.

\emph{Step 6.} If we prove that
\begin{align}\label{det.simple.reg}
	&\int_{\hat\reg\cap\hat\reh}\det(Z,W)
	\le\frac{1}{4}\lvert \dive(\uno_{\hat\reg}Z)\rvert(\R^2)\,\lvert \dive(\uno_{\hat\reh}Z)\rvert(\R^2)
\end{align}
for two chains of regions $\hat\reg$ and $\hat\reh$ as above (relative to $Z$ and $W$, respectively), then summing over all chains $\hat\reg$ and $\hat\reh$ (for a given choice of $\mu$ and $\nu$) we get
\begin{align*}
	&\int_{\R^2}\det\Big(\sum_{\alpha_\reg>\mu}\uno_\reg Z,\sum_{\alpha_\reh>\nu}\uno_\reh W\Big)
	\le\frac{1}{4}s(\mu)t(\nu),
\end{align*}
and \cref{det.simple} follows by integrating in $\mu$ and $\nu$.

\emph{Step 7.} In order to prove \cref{det.simple.reg}, we first find two smooth functions $f,g:\R^2\to\R$ such that
\begin{align*}
	&Z=\nabla^\perp f,\quad W=\nabla^\perp g,
\end{align*}
where $\nabla^\perp=(-\de_y,\de_x)$. The function $f$ is just any primitive of the closed form $Z^y\,dx-Z^x\,dy$, and similarly for $g$. Note that the level sets of $f$ and $g$ are precisely the (maximal) integral curves of $Z$ and $W$.

The key observation is that, since $\det(Z,W)>0$, an integral curve of $Z$ meets an integral curve of $W$ only once
(since $\det(Z,W)=-df(W)$, actually $f$ decreases along an integral curve of $W$).
Hence, the map $(f,g):\R^2\to\R^2$ is injective. Since $\det(Z,W)\ge 0$ is the Jacobian determinant of this map,
by the area formula the integral of $\det(Z,W)$ is bounded by the area of the image:
\begin{align*}
	&\int_{\hat\reg\cap\hat\reh}\det(Z,W)
	\le\mathcal{L}^2((f,g)(\hat\reg\cap\hat\reh)).
\end{align*}

Since $\hat\reg$ is foliated by level sets of $f$ starting on the left side $\de_L\hat\reg=\de_L\reg_{jk}$, the oscillation of $f$ on $\hat\reg$, namely $\sup_{\hat\reg}f-\inf_{\hat\reg}f$, is bounded by (and in fact equal to)
\begin{align*}
	&\int_{\de_L\hat\reg}|\de_y f|
	=\int_{\de_L\hat\reg}Z^x
	=\mz\lvert \dive(\uno_{\hat\reg}Z)\rvert(\R^2),
\end{align*}
where we used the fact that $\int_{\de_L\hat\reg}Z^x=\int_{\de_R\hat\reg}Z^x$ in the last equality.
Hence, the image of $(f,g)|_{\hat\reg\cap\hat\reh}$ is included in a rectangle of area
\begin{align*}
	&\frac{1}{4}\lvert \dive(\uno_{\hat\reg}Z)\rvert(\R^2)\,\lvert \dive(\uno_{\hat\reh}W)\rvert(\R^2),
\end{align*}
which proves \cref{det.simple.reg} and thus \cref{det.simple}.

\subsection{A more general bound}
We present here an intermediate version, which will be instrumental in obtaining \cref{main.tw}.
Let us start by introducing some additional terminology.
Given $j,k\in\Z$, we let
$$\str_j^x:=[j,j+1]\times\R,\quad \str_k^y:=\R\times[k,k+1],$$
called \emph{vertical} and \emph{horizontal} stripes, respectively.
Assume now that $S,T:\R^2\to\R^2$ are smooth vector fields.

\begin{definition}\label{good.region}
We say that $p\in\{j\}\times\R$ is a \emph{good initial condition} for $S$ if, for
the integral curve $\gamma_p$ of $S$ with $\gamma_p(0)=p$,
there exists $\tau_p>0$ such that
$$\gamma_p(\tau_p)\in\{j+1\}\times\R,\quad\gamma_p(t)\in(j,j+1)\times\R\text{ for all }t\in(0,\tau_p),$$
as well as $\dot\gamma_p^x(0),\dot\gamma_p^x(\tau_p)>0$.
We say that $\mathcal{R}^x\subset\str_j^x$ is a \emph{good region} for $S$
if there exists a segment $\mathcal{I}:=\{j\}\times[a,b]$ (with $a<b$) on the left side,
consisting of good initial conditions for $S$, such that
$$\mathcal{R}^x=\bigcup_{p\in\mathcal{I}}\gamma_p([0,\tau_p]).$$
The same notions for $T$ are defined analogously, interchanging the roles of $x,y$.
\end{definition}

Note that, since each $\gamma_p$ meets $\de\str_j^x$ transversely, $\tau_p$
depends smoothly on $p$. In particular, a good region $\mathcal{R}^x$
is diffeomorphic to a compact rectangle $[a,b]\times[0,1]$ and $S(q)\neq0$
for all $q\in\mathcal{R}^x$.

Given two good regions $\mathcal{R}^x,\mathcal{R}^y$ for $S,T$, respectively,
as in the proof of \cref{det.simple} we can construct smooth vector fields
$Z:\mathcal{R}^x\to\R^2$ and $W:\mathcal{R}^y\to\R^2$, as well as smooth functions $\alpha:\mathcal{R}^x\to\R$
and $\beta:\mathcal{R}^y\to\R$, such that
$$\dive Z=0,\quad\dive W=0,\quad S=\alpha Z,\quad T=\beta W,\quad \alpha>0,\quad \beta>0.$$
Note that we are \emph{no longer} assuming that $S^x\ge0$ and $T^y\ge0$, nor that $\det(S,T)\ge0$
on $\mathcal{R}^x\cap\mathcal{R}^y$ (although we do have $S^x>0$ on the left and right sides of $\mathcal{R}^x$,
and similarly for $W$).


\begin{definition}\label{g.def}
	We define the cones
	\begin{align*}
		&C^x:=\{(x,y):|y|<x\},\quad C^y:=\{(x,y):|x|<y\}.
	\end{align*}
	Also, given $p\in\mathcal{R}^x$, consider the (maximal) integral curve $\gamma_p:I\to\R^2$ of $S$ with initial condition $\gamma_p(0)=p$ and let $\tau_p\ge 0$ be the first time such that $\gamma_p^x(\tau_p)=j+1$. 
	We then define $G^x\subseteq\mathcal{R}^x$ to be the Borel set of points $p$ such that $\gamma_p(t)\in p+C^x$ for all $t\in(0,\tau_p]$. The set $G^y\subseteq\mathcal{R}^y$ is defined analogously (with $T$ and $C^y$ in place of $S$ and $C^x$). Finally, we let
	\begin{align*}
		&G:=G^x\cap G^y.
	\end{align*}
\end{definition}

The following picture illustrates a typical point $p\in G^x$.

	\vspace{5pt}
	\begin{center}
		\begin{overpic}[width=8cm]{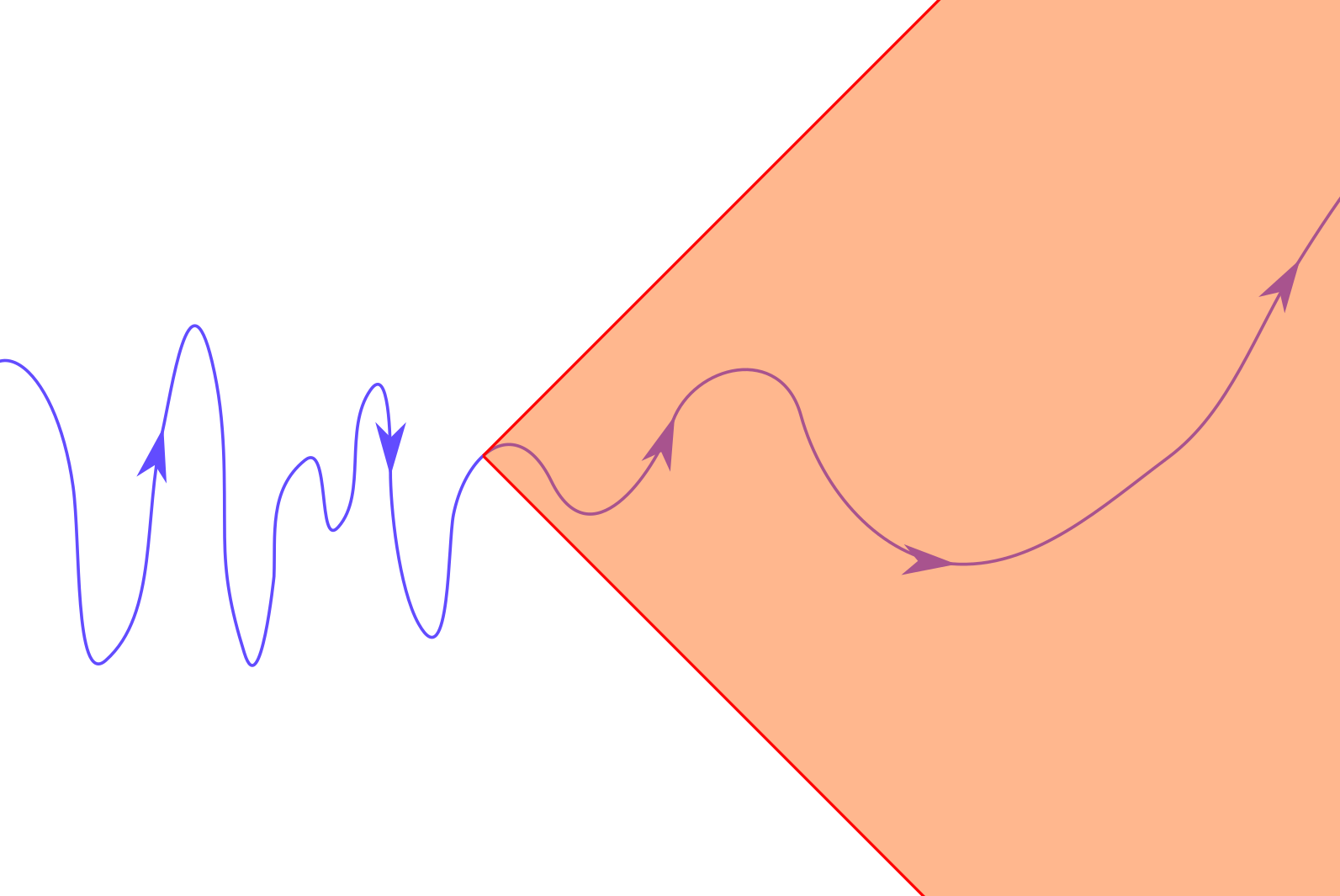}
			\put(2,45){{\color{blue}flow line of $S$}}
			\put(33,34){$p$}
			\put(65,54){{\color{red}cone $p+C^x$}}
		\end{overpic}
	\end{center}
	\vspace{5pt}

\begin{rmk}\label{key}
	The key property of this set is that, whenever $p,q\in G$, the two points cannot belong to the same integral curves for $S$ and $T$, unless $p=q$. Indeed, if they were distinct points on the same integral curve of $S$, then either $q\in p+C^x$ or $p\in q+C^x$; in both cases, it would follow that $|p^y-q^y|<|p^x-q^x|$. The same argument for $T$ would give the reverse inequality and thus a contradiction.
\end{rmk}

We will prove that, for modified vector fields $S_f$ and $T_f$, the inequality
\begin{align}\label{g.est}
	&\int_{G}\det(S_f,T_f)\le\int_{\mathcal{R}^x}(S^x+\lvert \dive S\rvert)\int_{\mathcal{R}^y}(T^y+\lvert \dive T\rvert)
\end{align}
holds.
In order to reach this inequality, we need to localize the proof of \cref{det.simple}, as follows.

Given any $p\in\mathcal{R}^x$, we can write $p=\gamma(t)$ for a unique (piece of) integral curve $\gamma:[0,T]\to\mathcal{R}^x$ of $Z$ starting on the left side $\de_L\mathcal{R}^x$ of $\mathcal{R}^x$ and ending on the right side (namely, $\gamma^x(0)=j$ and $\gamma^x(T)=j+1$). We then let
\begin{align}\label{s.f.def}
	&\alpha_f(p):=\min_{[0,T]}(\alpha\circ\gamma)>0,\quad S_f:=\alpha_f Z.
\end{align}
Similarly we define $\beta_f$ and $T_f$ on $\mathcal{R}^y$.

Since $\alpha_f$ is constant along integral curves of $Z$, the (continuous) vector field $\alpha_f Z$ is still divergence-free on $\mathcal{R}^x$.
As in the previous subsection, we claim that
\begin{align}\label{g.simple.est}
	&\int_{G}\det(S_f,T_f)\le\Big(\int_{\de_L\mathcal{R}^x}S^x\,d\hau^1\Big)\Big(\int_{\de_B\mathcal{R}^y}T^y\,d\hau^1\Big),
\end{align}
where the last two integrals are on the left and bottom sides, respectively.
Indeed, since $S_f$ is divergence-free and $\mathcal{R}^x$ is simply connected, we can write $S_f=\nabla^\perp\phi$, and similarly $T_f=\nabla^\perp\psi$, for two $C^1$ functions $\phi$ and $\psi$ (on $\mathcal{R}^x$ and $\mathcal{R}^y$, respectively). 

Since $S_f^x>0$ on the left side $\de_L\mathcal{R}^x$,
$\phi$ is injective here, and hence
each integral curve of $S$ (in $\mathcal{R}^x$) is an entire level set of $\phi$;
the same holds for $T$ and $\psi$.
By \cref{key}, we can then assert that the map $(\phi,\psi)$ is injective on $G$. Hence, by the area formula, $\int_{G}\det(S_f,T_f)=\int_{G}\det(\alpha_f Z,\beta_f W)$ is bounded by the area of the image of this map.

The oscillation of $\phi$ (namely, $\sup_{\mathcal{R}^x}\phi-\inf_{\mathcal{R}^x}\phi$) is bounded by (and actually equal to)
\begin{align*}
	&\int_{\de_L\mathcal{R}^x}|\de_y\phi|
	=\int_{\de_L\mathcal{R}^x}\alpha_f Z^x
	\le\int_{\de_L\mathcal{R}^x}\alpha Z^x
	=\int_{\de_L\mathcal{R}^x}S^x,
\end{align*}
where we used the fact that $\alpha_f\le\alpha$. This, together with the same bound for the oscillation of $\psi$, gives our claim \cref{g.simple.est}.

Further, since $S$ is tangent to the top and bottom sides of $\mathcal{R}^x$
(namely, to the boundary $\de\mathcal{R}^x$ in the open stripe $(j,j+1)\times\R$), we have
$$\int_{(\{j\}\times\R)\cap\mathcal{R}^x}S^x
=\int_{(\{s\}\times\R)\cap\mathcal{R}^x}S^x
-\int_{([j,s]\times\R)\cap\mathcal{R}^x}\operatorname{div}(S)$$
for a.e.\ $s\in[j,j+1]$.
Averaging over this interval, we obtain
\begin{align*}
	&\int_{\de_L\mathcal{R}^x}S^x\le\int_{\mathcal{R}^x}(S^x+\lvert \dive S\rvert).
\end{align*}
This proves \cref{g.est}.

\subsection{Complementary inequalities}
In the same setting as in the previous subsection,
the vector field $S_f=\alpha_f Z$ admits a useful estimate on the complement $\de_L\mathcal{R}^x\setminus G^x$,
as we now show.

\begin{proposition}\label{complement}
	We have the bound
	\begin{align*}
		&\int_{\mathcal{R}^x\setminus G^x}(S_f^x-|S_f^y|)^+
		\le\int_{\mathcal{R}^x}(S_f^x-|S_f^y|)^-,
	\end{align*}
	as well as the analogous one for $T_f$ on $\mathcal{R}^y$ (with $x$ and $y$ interchanged).
\end{proposition}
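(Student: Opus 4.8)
The plan is to disintegrate both sides along the integral curves of $Z$ and reduce to a one-dimensional inequality. Let $f\in C^\infty(\R^2)$ be a stream function of $Z$, i.e.\ $Z=\nabla^\perp f=(-\de_yf,\de_xf)$; it exists since $\dive Z=0$, and — because every integral curve of $Z$ is a complete graph over the horizontal axis — the level sets $\{f=s\}$ foliate the plane, so $(x,y)\mapsto(x,f(x,y))$ is a diffeomorphism of $\R^2$ with Jacobian $Z^x=-\de_yf$. Each curve $\{f=s\}$ is a graph $\{(x,\eta_s(x)):x\in\R\}$ with $\eta_s'(x)=(Z^y/Z^x)(x,\eta_s(x))$, and by \eqref{s.f.def} the function $\alpha_f$ is constant along it, equal to some $A(s)\ge0$ (nonnegative by \eqref{positive}). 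Since $S_f^x-|S_f^y|=\alpha_f(Z^x-|Z^y|)$ and, along $\{f=s\}$, $(Z^x-|Z^y|)^+=Z^x(1-|\eta_s'|)^+$ and $(Z^x-|Z^y|)^-=Z^x(|\eta_s'|-1)^+$, the change of variables makes the Jacobian cancel and gives
\begin{align*}
\int_{\str_j^x\setminus G_j^x}(S_f^x-|S_f^y|)^+&=\int_\R A(s)\,\Big[\int_{E_s}(1-|\eta_s'(x)|)^+\,dx\Big]\,ds,\\
\int_{\str_j^x}(S_f^x-|S_f^y|)^-&=\int_\R A(s)\,\Big[\int_j^{j+1}(|\eta_s'(x)|-1)^+\,dx\Big]\,ds,
\end{align*}
where $E_s:=\{x_0\in[j,j+1):(x_0,\eta_s(x_0))\notin G_j^x\}$. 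Unwinding \cref{g.def}, $x_0\in E_s$ exactly when $|\eta_s(x)-\eta_s(x_0)|\ge x-x_0$ for some $x\in(x_0,j+1]$. As $A\ge0$, the proposition will follow once we prove, for every $h\in C^1([j,j+1])$,
\begin{equation*}
\int_{E}(1-|h'|)^+\le\int_j^{j+1}(|h'|-1)^+,\qquad E:=\{x_0:\ |h(x)-h(x_0)|\ge x-x_0\ \text{for some }x\in(x_0,j+1]\}.\tag{$\star$}
\end{equation*}

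To prove $(\star)$ I would set $u(x):=h(x)-x$ and $v(x):=h(x)+x$, so that $E=E_u\cup E_v$ with $E_u=\{x_0:\exists x>x_0,\ u(x)\ge u(x_0)\}$ and $E_v=\{x_0:\exists x>x_0,\ v(x)\le v(x_0)\}$. Applying Riesz's rising sun lemma to $u$, and to $-v$, on $[j,j+1]$ produces open sets $U_u=\bigsqcup_k(a_k,b_k)$ and $U_v=\bigsqcup_\ell(c_\ell,d_\ell)$ — the "strict" versions of $E_u,E_v$ — with $u(a_k)=u(b_k)$ and $v(c_\ell)=v(d_\ell)$ on each component (with "$\le$", resp.\ "$\ge$", at a component touching the endpoint $j$). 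The identity $u(a_k)=u(b_k)$ says $\int_{a_k}^{b_k}(h'-1)=0$, hence $\int_{a_k}^{b_k}(h'-1)^+=\int_{a_k}^{b_k}(1-h')^+\ge\int_{a_k}^{b_k}(1-|h'|)^+$ (using $1-h'\ge1-|h'|$), and a component meeting $j$ only helps; summing, $\int_{U_u}(1-|h'|)^+\le\int_{U_u}(h'-1)^+$, and symmetrically $\int_{U_v}(1-|h'|)^+\le\int_{U_v}(-h'-1)^+$.

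The step I expect to be the main obstacle is showing that $E$ differs from $U_u\cup U_v$ only by a set on which $(1-|h'|)^+$ vanishes a.e. On $B_u:=E_u\setminus U_u$ the function $u$ equals its running maximum $\max_{[\,\cdot\,,j+1]}u$ and attains it again strictly to the right; the restriction of $u$ to $\{u=\max_{[\,\cdot\,,j+1]}u\}$ is nonincreasing, so $u'\le0$ (i.e.\ $h'\le1$) a.e.\ on $B_u$, while on $\{u'<0\}\cap B_u$ each point is right-isolated from $B_u$ — using $h\in C^1$, a stretch where $u$ is strictly decreasing contains no point whose running maximum recurs to the right — so that subset is countable. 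Hence $h'=1$ a.e.\ on $B_u$ and $\int_{B_u}(1-|h'|)^+=0$; symmetrically $\int_{B_v}(1-|h'|)^+=0$ (with $v$, running minimum, nondecreasing). Therefore $\int_{E_u}(1-|h'|)^+=\int_{U_u}(1-|h'|)^+$ and likewise for $v$, and combining with the rising-sun bounds and the pointwise identity $(t-1)^++(-t-1)^+=(|t|-1)^+$ (note the disjoint supports),
\[\int_E(1-|h'|)^+\le\int_{U_u}(1-|h'|)^++\int_{U_v}(1-|h'|)^+\le\int_{U_u}(h'-1)^++\int_{U_v}(-h'-1)^+\le\int_j^{j+1}(|h'|-1)^+,\]
which is $(\star)$. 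The bound for $T_f$ on $\str_k^y$ follows by interchanging the roles of $x$ and $y$.
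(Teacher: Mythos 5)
Your argument is correct, and it shares the paper's overall skeleton---disintegrate over the flow lines of $Z$ (with $\alpha_f$ constant on each line, factored out as the weight $A(s)$) and reduce to a one-dimensional inequality along each line---but the one-dimensional engine is genuinely different. The paper works with the single function $\xi(t)=\int_0^t(\dot\gamma^x-|\dot\gamma^y|)$ along the arclength-parametrized flow line and proves, via the area formula, that $\int_{[0,L]\setminus E}\dot\xi^+\le\int_0^L\dot\xi^-$ for the set $E$ of points from which $\xi$ strictly increases to the right; the proposition then follows from the single inclusion $E\subseteq\gamma^{-1}(G_j^x)$, so no exact identification of $G_j^x$ and no strict-versus-nonstrict bookkeeping is needed (the left-hand integrand is nonnegative, so enlarging the domain is harmless). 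You instead characterize the complement of $G_j^x$ exactly in graph coordinates, split the cone condition into the two functions $h-x$ and $h+x$, and apply Riesz's rising sun lemma to each, which obliges you to dispose of the discrepancy sets $B_u,B_v$ where the running extremum recurs without being exceeded; your right-isolation/countability argument does close this correctly (for $x_0\in B_u$ with $u'(x_0)<0$, every point slightly to its right lies strictly below $u(x_0)$, whose value recurs later, hence belongs to the strict set $U_u$), though the parenthetical justification as literally phrased---``a stretch where $u$ is strictly decreasing contains no point whose running maximum recurs to the right''---is not true in general; it is the membership of $x_0$ in $B_u$ that makes the argument run. In terms of trade-offs: the paper's single-function device is shorter and sidesteps both the exact description of $G_j^x$ and the boundary-set issue (its intermediate bound even holds on the possibly larger set $[0,L]\setminus E$), while your route is more elementary and self-contained---classical rising sun lemma, graph parametrization via the stream function of $Z$ rather than of $S_f$---and in particular avoids the paper's restriction to regular level sets of $\phi$ by carrying the weight $A(s)\ge0$ explicitly.
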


This fact will be a direct consequence of the next elementary lemmas.
The first one is a version of the so-called \emph{rising sun lemma}.

\begin{lemmaen}\label{sunrise}
	Given $\xi\in W^{1,1}([0,L],\R)$ (continuous), define the Borel set
	\begin{align*}
		&E:=\{s\in [0,L]\,:\,\xi(t)>\xi(s)\text{ for all }t>s\}.
	\end{align*}
	Then we have
	\begin{align*}
		&\int_{[0,L]\setminus E}\dot\xi^+
		\le\int_{[0,L]}\dot\xi^-.
	\end{align*}
\end{lemmaen}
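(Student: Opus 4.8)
The plan is to analyze the structure of the complement $[0,L]\setminus E$, which consists of points $s$ where $\xi$ comes back down to (or below) the level $\xi(s)$ at some later time. First I would observe that $E$ is relatively open in $[0,L]$ (for each $s\in E$, the condition $\xi(t)>\xi(s)$ for all $t>s$ persists under small perturbations of $s$, using continuity of $\xi$ and the fact that $\inf_{t\ge s+\delta}\xi(t)>\xi(s)$ for any $\delta>0$). Hence its complement $K:=[0,L]\setminus E$ is relatively closed, and we may write $E$ as a countable disjoint union of relatively open intervals $(a_i,b_i)$ (with possibly $a_i=0$ or $b_i=L$ at the endpoints).

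The key structural claim is that on each complementary ``gap'' the net increase of $\xi$ is nonpositive in the following precise sense: on $[a_i,b_i]$ we have $\xi(b_i)\le\xi(a_i)$ if $a_i>0$, and more relevantly, I would show that $\int_K\dot\xi\le 0$, i.e.\ $\int_K\dot\xi^+\le\int_K\dot\xi^-$; since trivially $\int_K\dot\xi^-\le\int_{[0,L]}\dot\xi^-$, this gives the desired bound (noting $\int_{[0,L]\setminus E}\dot\xi^+=\int_K\dot\xi^+$). To prove $\int_K\dot\xi\le 0$, I would argue as follows. For $s\notin E$ there is some $t>s$ with $\xi(t)\le\xi(s)$; let $\tau(s):=\sup\{t>s:\xi(t)\le\xi(s)\}\le L$. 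One checks (via continuity and the definition of $E$) that $[s,\tau(s)]\subseteq$ a single gap closure, and that $\xi(\tau(s))\le\xi(s)$ when $\tau(s)<L$, while the sup of $\xi$ over $[s,\tau(s)]$ does not exceed the value at a suitable earlier point. A cleaner route: consider the running infimum from the right, $m(s):=\inf_{t\in[s,L]}\xi(t)$. Then $m$ is nondecreasing, $m(s)\le\xi(s)$ always, and $m(s)=\xi(s)$ precisely when $s\notin E$ together with the boundary point $s=L$ — more carefully, $s\in E$ iff $\xi(s)<\xi(t)$ for all $t>s$ iff $\xi(s)<m(s^+)$ (the infimum over $(s,L]$); and at points of $K$ one has $\xi(s)=$ the right-infimum. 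Now on the set $\{s:\xi(s)=m(s)\}$, which contains $K$ up to the single point behaviour at $L$, the function $\xi$ and the nondecreasing function $m$ agree, so $\dot\xi=\dot m\ge 0$ there...

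Wait — that would give the inequality with the roles reversed, so instead I would use the running infimum from the left or argue with the \emph{descending} structure. Let me restate: the correct object is that on $K=[0,L]\setminus E$, whenever $\dot\xi(s)>0$ we are at a point where $\xi$ is momentarily increasing but will later return to level $\xi(s)$ or below; the ``budget'' of such increase must be paid back by an equal or larger amount of decrease. Concretely, I would write $K$ as $\bigcup_i[c_i,d_i]$ (the maximal intervals of $K$; a complementary interval of the open set $E$) and on each such interval note that $E\cap(d_i,L]$ contributes nothing ``after''; the cleanest is: for $s\in[c_i,d_i]$, the point $d_i$ satisfies $d_i\in K$ or $d_i=L$, hence $\xi$ attains on $[s,L]$ a value $\le\xi(s)$, and in fact $\min_{[s,d_i]}\xi$ plus the structure shows $\int_{c_i}^{d_i}\dot\xi = \xi(d_i)-\xi(c_i)$, so it suffices that $\xi(d_i)\le\xi(c_i)$ for each component — but that can fail. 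So the genuine argument must be pointwise/measure-theoretic rather than component-by-component, using that at a.e.\ point of $K$ where $\dot\xi>0$ one can ``charge'' it against a later descent within $K$; this is essentially a rising-sun / Vitali covering argument (Riesz's rising sun lemma applied to $-\xi$).

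\textbf{Main obstacle.} The hard part is precisely the combinatorial bookkeeping showing $\int_{[0,L]\setminus E}\dot\xi^+\le\int_{[0,L]}\dot\xi^-$: a naive decomposition into connected components of $E$ or of its complement does not immediately work because the ``payback'' of a positive excursion on $K$ may occur arbitrarily far to the right. I expect the clean proof to invoke Riesz's \emph{rising sun lemma}: apply it to the continuous function $-\xi$ on $[0,L]$ to produce an open set $U$ (the ``shadow'') such that on $U$ the function $-\xi$ rises, and observe that the complement of $U$ is related to $E$ (points from which $\xi$ never later goes strictly below). Then on the disjoint intervals $(a_i,b_i)$ comprising $U$ one has $-\xi(b_i)\ge-\xi(a_i)$, i.e.\ $\xi(a_i)\ge\xi(b_i)$, which controls the total decrease; combining this with the fact that $[0,L]\setminus E\subseteq \bar U$ up to countably many points and that $\int_{(a_i,b_i)}\dot\xi^+ \le \int_{(a_i,b_i)}\dot\xi^- + (\xi(b_i)-\xi(a_i))^+ = \int_{(a_i,b_i)}\dot\xi^-$ yields the claim after summing over $i$. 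Once this lemma is in hand, \cref{complement} follows immediately by applying it along each integral curve $\gamma:[0,T]\to\str_j^x$ of $Z$ to the function $\xi(t):=\gamma^y(t)$ (so that $\dot\xi = Z^y/Z^x$ up to the positive factor $Z^x$, and the cone condition $\gamma(t)\in\gamma(s)+C^x$ translates exactly into $|\xi(t)-\xi(s)|<\gamma^x(t)-\gamma^x(s)$, matching the definition of $G_j^x$), then integrating over the family of curves against the measure $\alpha_f Z^x\,d\hau^1$ on $\{j\}\times\R$ and using $S_f^x=\alpha_f Z^x$, $S_f^y=\alpha_f Z^y$ together with $\alpha_f$ being constant along curves.
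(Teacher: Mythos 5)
Your final route (Riesz's rising sun lemma applied to $-\xi$) controls only part of the set you need: the gap is the set $D:=\{s\in[0,L]\setminus E:\ \xi(t)\ge\xi(s)\ \text{for all }t>s\}$, i.e.\ the points whose value is re-attained later but never strictly undercut. Indeed $[0,L]\setminus E=U\cup D$, where $U=\{s:\exists t>s,\ \xi(t)<\xi(s)\}$ is the shadow set of $-\xi$, and your claim that $[0,L]\setminus E\subseteq\bar U$ up to countably many points is false: for $\xi\equiv\mathrm{const}$ one has $U=\emptyset$ while $D=[0,L)$, and more generally $D$ can have positive measure. Even when points of $D$ do lie in $\bar U$, they lie in $\partial U$, which may itself have positive Lebesgue measure, and the componentwise bound $\int_{(a_i,b_i)}\dot\xi^+\le\int_{(a_i,b_i)}\dot\xi^-$ says nothing about $\int_{\partial U}\dot\xi^+$; so summing over the components of $U$ does not yield the claim. (Your opening observation that $E$ is relatively open is also false --- the bottom point of a strict valley followed by a strict increase lies in $E$, while nearby points on the descending branch do not --- though this is not used in your final argument.)

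The gap is reparable, but it needs an extra idea you did not supply: one shows $\dot\xi=0$ a.e.\ on $D$. Since $\xi(s)=\min_{[s,L]}\xi$ for $s\in D$, the restriction $\xi|_D$ is nondecreasing; moreover, if $s_1<s_2$ lie in $D$ with $\xi(s_1)<\xi(s_2)$, then the last time $\tau(s_1)>s_1$ at which $\xi$ equals $\xi(s_1)$ satisfies $\tau(s_1)<s_2$, so the intervals $(s,\tau(s))$ attached to points of $D$ with distinct values are pairwise disjoint; hence $\xi(D)$ is countable, and since $\dot\xi=0$ a.e.\ on each level set of a $W^{1,1}$ function, $\dot\xi^+=0$ a.e.\ on $D$. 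Combined with your rising-sun estimate $\int_U\dot\xi^+\le\int_U\dot\xi^-$ this gives the lemma. Note that the paper's proof avoids all of this bookkeeping: if $\xi(L)\le\xi(0)$ the claim is immediate, and otherwise $\xi(E)\supseteq[\xi(0),\xi(L)]$ (the last point of each level set belongs to $E$) together with $E\subseteq\{\dot\xi\ge0\}$ a.e.\ and the area formula gives $\int_E\dot\xi^+\ge\xi(L)-\xi(0)=\int_0^L\dot\xi^+-\int_0^L\dot\xi^-$, which is the claim after rearranging. (Incidentally, in your closing remark on deducing \cref{complement}, the lemma must be applied, as in the paper, to $\xi(t)=\int_0^t(\dot\gamma^x-|\dot\gamma^y|)$ rather than to $\gamma^y$: with $\xi=\gamma^y$ neither the set $E$ nor $\dot\xi^{\pm}$ matches the cone condition and the quantities $S_f^P,S_f^N$.)
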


\begin{proof}
	Let $I:=[0,L]$. If $\xi(L)\le\xi(0)$ then $\int_I\dot\xi\le 0$, hence $\int_I\dot\xi^+\le\int_I\dot\xi^-$, and the claim follows in this case.
	
	Assume now $\xi(L)>\xi(0)$.
	The image $\xi(E)$ includes $[\xi(0),\xi(L)]$ since, for any $\xi(0)\le\lambda\le\xi(L)$, we have $\max\xi^{-1}(\lambda)\in E$.
	Since $E\subseteq\{\dot\xi\ge 0\}$ up to negligible sets, the area formula gives
	\begin{align*}
		&\int_E\dot\xi^+
		=\int_E|\dot\xi|
		\ge\xi(L)-\xi(0)
		=\int_I\dot\xi^+ - \int_I\dot\xi^-. \qedhere
	\end{align*}
\end{proof}

The second lemma will be invoked several times and is thus stated in a generality that
covers all the cases in which it will be used.

\begin{lemmaen}\label{lemma.curve}
There exist two universal constants $\Lambda,C'\ge1$ such that the following hold,
for a Lipschitz curve $\gamma:[0,L]\to\str_j^x=[j,j+1]\times\R$ with unit speed:
\begin{itemize}
\item[(i)] if 
$\gamma^x(0)\ge\gamma^x(L)$
then
$$\int_0^L(\dot\gamma^x-|\dot\gamma^y|)^+\,dt
\le \int_0^L(\dot\gamma^x-|\dot\gamma^y|)^-\,dt;$$
\item[(ii)] 
in general, without the previous assumption, we have
$$\int_0^L(\dot\gamma^x-|\dot\gamma^y|)^+\,dt
\le \Lambda+C'\int_0^L(\dot\gamma^x-|\dot\gamma^y|)^-\,dt.$$
If $\dot\gamma^x\ge0$, then $C'$ can be taken arbitrarily small (by increasing $\Lambda$).
\end{itemize}
Analogous statements hold for a stripe $\str_k^y$, interchanging the roles of $x,y$.
\end{lemmaen}

\begin{proof}
The claim holds trivially in case (i), since
$$\int_0^L(\dot\gamma^x-|\dot\gamma^y|)\,dt\le\int_0^L\dot\gamma^x\,dt\le0$$
and the left-hand side is the difference of the integrals of positive and negative parts.
To check (ii), we can assume that $L\ge\Lambda$, since otherwise we can simply bound
$$\int_0^L(\dot\gamma^x-|\dot\gamma^y|)^+\,dt
\le L\le \Lambda.$$

Let us fix $\theta_0\in(\frac\pi4,\frac\pi2)$ and $\alpha,\beta>0$ such that
$(\cos\theta-|\sin\theta|)^-\ge\alpha$ for all $\theta\in[\theta_0,\pi]$
and $\cos\theta\ge\beta$ for all $\theta\in[0,\theta_0]$.
Letting $E:=\{t\,:\,\dot\gamma^x(t)\le\cos\theta_0\}$
we have
$$\alpha\cdot\mathcal{L}^1(E)\le\int_0^L(\dot\gamma^x-|\dot\gamma^y|)^-\,dt,$$
and hence
$$\int_{[0,L]\setminus E}\dot\gamma^x\,dt
\ge\beta\cdot\mathcal{L}^1([0,L]\setminus E)
\ge\beta L-\frac{\beta}{\alpha}\int_0^L(\dot\gamma^x-|\dot\gamma^y|)^-\,dt,$$
while trivially
$$\int_{[0,L]\setminus E}\dot\gamma^x\,dt
\le\int_0^L\dot\gamma^x\,dt+\mathcal{L}^1(E)
\le1+\frac1\alpha\int_0^L(\dot\gamma^x-|\dot\gamma^y|)^-\,dt,$$
giving
$$\frac{\beta+1}{\alpha}\int_0^L(\dot\gamma^x-|\dot\gamma^y|)^-\,dt\ge\beta L-1.$$
Once we fix any $\Lambda\ge\frac2\beta$, we have $\beta\Lambda-1\ge\frac{\beta\Lambda}{2}\ge\frac{\beta L}{2}$ and thus the bound
\begin{align*}
    &\int_0^L(\dot\gamma^x-|\dot\gamma^y|)^+\,dt
    \le L\le\frac{2(\beta+1)}{\alpha\beta}\int_0^L(\dot\gamma^x-|\dot\gamma^y|)^-\,dt,
\end{align*}
giving again the claim.

If $\dot\gamma^x\ge0$ we can argue instead as follows: we have $\dot\gamma^x+|\dot\gamma^y|\ge1$ and thus
	\begin{align*}
		&\int_0^L(\dot\gamma^x-|\dot\gamma^y|)^-
		\ge\int_0^L(|\dot\gamma^y|-\dot\gamma^x)
		\ge\int_0^L(1-2\dot\gamma^x)
		\ge \Lambda-2,
	\end{align*}
	which gives
	\begin{equation*}
		\int_0^L(\dot\gamma^x-|\dot\gamma^y|)^+
		\le\int_0^L\dot\gamma^x
		\le 1
		\le\frac{1}{\Lambda-2}\int_0^L(\dot\gamma^x-|\dot\gamma^y|)^-.\qedhere
	\end{equation*}
\end{proof}

\begin{proof}[Proof of \cref{complement}]
	As above, we write $S_f=\nabla^\perp\phi$. Given a level set of $\phi$, we can parametrize it with an integral curve $\gamma:[0,L]\to\mathcal{R}^x$ of $Z/|Z|$, with $\gamma^x(0)=j$ and $\gamma^x(L)=j+1$.
	Note that $\gamma$ has unit speed and that
	\begin{align*}
		&\dot\gamma=\frac{(-\de_y\phi,\de_x\phi)}{|d\phi|}\circ\gamma.
	\end{align*}
	We apply \cref{sunrise} with
	$\xi(t):=\int_0^t(\dot\gamma^x-|\dot\gamma^y|)$.
	With $E$ as in the statement of the lemma, if $s\in E$ then for all $t>s$ we have
	\begin{align*}
		&0
		<\xi(t)-\xi(s)
		=\gamma^x(t)-\gamma^x(s)-\int_s^t|\dot\gamma^y|
		\le\gamma^x(t)-\gamma^x(s)-|\gamma^y(t)-\gamma^y(s)|.
	\end{align*}
	This means that $\gamma(t)-\gamma(s)\in C^x$, hence $\gamma(s)\in G_j^x$.
	
	Since $\dot\xi=\frac{-\de_y\phi-|\de_x\phi|}{|d\phi|}\circ\gamma$, the lemma gives
	\begin{align*}
		\int_{[0,L]\setminus E}\frac{(-\de_y\phi-|\de_x\phi|)^+}{|d\phi|}\circ\gamma
		&\le
        \int_0^L\frac{(-\de_y\phi-|\de_x\phi|)^-}{|d\phi|}\circ\gamma.
	\end{align*}
	Integrating over all level sets of $\phi$ and using the coarea formula, we get
	\begin{align*}
		&\int_{\mathcal{R}^x\setminus G_j^x}(-\de_y\phi-|\de_x\phi|)^+
		\le\int_{\mathcal{R}^x}(-\de_y\phi-|\de_x\phi|)^-.
	\end{align*}
	Since $\nabla^\perp\phi=S_f$, the claim follows.
\end{proof}

We now turn to estimate
\begin{equation}\label{s.d.def}
    S_d:=S-S_f=\alpha_d Z,\quad \alpha_d:=\alpha-\alpha_f.
\end{equation}
Recall that $0<\alpha_f\le\alpha$, which gives $0\le\alpha_d<\alpha$.

\begin{proposition}\label{complement.bis}
	For the same constants $\Lambda,C'$ as above, we have
	\begin{align*}
		&\int_{\mathcal{R}^x}(S_d^x-|S_d^y|)^+
		\le\Lambda\int_{\mathcal{R}^x}\lvert \dive S\rvert+C'\int_{\str_j^x}(S_d^x-|S_d^y|)^-.
	\end{align*}
    If $S^x\ge0$ then we can take $C'=1$.
\end{proposition}

\begin{proof}
	Writing $Z=\nabla^\perp f$, consider a level set $\gamma:[0,L]\to\mathcal{R}^x$ of $f$, parametrized by arclength.
	We have
	\begin{align*}
		&\int_0^L\alpha_d(\dot\gamma^x-|\dot\gamma^y|)^+\,dt
		=\int_0^{\max\alpha_d}\int_{\{\alpha_d>s\}}(\dot\gamma^x-|\dot\gamma^y|)^+\,dt\,ds,
	\end{align*}
	where we write $\alpha_d$ in place of $\alpha_d\circ\gamma$ for simplicity.
	Note that, calling $N_s$ the cardinality of $\{\alpha_d=s\}$, the set $\{\alpha_d>s\}$
	has at most $N_s$ connected components, since $\alpha_d$
	takes all values in $(0,\max\alpha_d)$. Hence, applying part (ii) of
	\cref{lemma.curve} to $\gamma$ restricted to each connected component, we get
	$$\int_{\{\alpha_d>s\}}(\dot\gamma^x-|\dot\gamma^y|)^+\,dt
	\le \Lambda N_s+C'\int_{\{\alpha_d>s\}}(\dot\gamma^x-|\dot\gamma^y|)^+\,dt.$$
	Integrating in $s$ and using the area formula, we deduce that
	$$\int_0^L\alpha_d(\dot\gamma^x-|\dot\gamma^y|)^+
	\le \Lambda\int_0^L|\dot\alpha_d|+C'\int_0^L\alpha_d(\dot\gamma^x-|\dot\gamma^y|)^-.$$
	%
	%
If $S^x\ge0$ then $C'$ can be taken as small as desired, as $\dot\gamma^x\ge0$
in this case; in particular, we can take $C'=1$ (with $\Lambda:=3$).
    
	Since $\dot\gamma=\frac{Z}{|Z|}$ and $\dive Z=0$, we have $|\dot\alpha_d|=\frac{\lvert \dive S_d\rvert}{|Z|}=\frac{\lvert \dive S\rvert}{|df|}$, where we omit composition with $\gamma$. To sum up, recalling that $\dot\gamma=\frac{(-\de_y f,\de_x f)}{|df|}$, we get
	\begin{align*}
		\int_0^L\alpha_d\frac{(-\de_y f-|\de_x f|)^+}{|df|}\circ\gamma
		&\le\Lambda\int_0^L\frac{\lvert \dive S\rvert}{|df|}\circ\gamma
		+C'\int_0^L\alpha_d\frac{(-\de_y f-|\de_x f|)^-}{|df|}\circ\gamma.
	\end{align*}
	The claim follows using the coarea formula for $f$.
\end{proof}

\begin{rmk}\label{also.vanish}
Note that the notion of good region $\mathcal{R}^x$ for $S$ can be relaxed slightly:
we can rather assume that there exists a smooth unit vector field $U$ on $\mathcal{R}^x$
whose integral curves foliate $\mathcal{R}^x$ and that (pointwise) $S$ is a nonnegative multiple of $U$.
The previous proofs still apply: we can repeat them for $S+\delta U$ (with $\delta>0$) and then let $\delta\to0$.
\end{rmk}

\subsection{Proof of \cref{main.tw}}
%
By approximation, it is enough to show the claim assuming
that $\chi\in C^0_c(\R^2)$ and (given such $\chi$) that $S,T\in C^\infty_c(\R^2,\R^2)$.
Indeed, if $S_\epsilon,T_\epsilon\weakto S,T$ as measures, with $|(S_\epsilon,T_\epsilon)|\weakto|(S,T)|$
and $\lvert\dive S_\epsilon\rvert,\lvert\dive T_\epsilon\rvert\weakto\lvert\dive S\rvert,\lvert\dive T\rvert$
tightly,
then by Reshetnyak's continuity principle we also have
$$\int_{\R^2}S_\epsilon^N\to\int_{\R^2}S^N,\quad\int_{\R^2}T_\epsilon^N\to\int_{\R^2}T^N,$$
and similarly
$$\int_{\R^2}\min\{S_\epsilon^P,T_\epsilon^P\}
\weakto\int_{\R^2}\min\{S^P,T^P\}.$$

While reference vector fields $Z,W$ as above can no longer be constructed on all the plane,
we first claim that we can reduce ourselves to the following situation, for a suitable finite set $F\subset\R^2$:
\begin{itemize}
\item[(i)] $S=\eta U$ and $T=\zeta V$,
where $\eta,\zeta\in C^\infty_c(\R^2\setminus F)$;
\item[(ii)] $U,V:\R^2\setminus F\to \mathbb{S}^1$ are smooth;
\item[(iii)] $U=\de_x$ and $V=\de_y$ outside of a large disk;
\item[(iv)] for each $p\in F$, each of $U,V$ has the following property: either it extends smoothly across $p$ or it has the form
$\pm\frac{i(z-p)}{|z-p|}$ near $p$, in complex notation.
\end{itemize}
The last condition implies that all but finitely many integral curves of $U$ and $V$
are defined at all times $t\in\R$ (and are disjoint from $F$).

To see this claim, let $\chi_\epsilon:\R^2\to\R$ be a nonnegative smooth function such that
$\chi_\epsilon=0$ on $\bar B_\epsilon(0)$ and $\chi_\epsilon=1$ on $\R^2\setminus B_{2\epsilon}$,
with $|d\chi_\epsilon|\le 2/\epsilon$. Letting $S_\epsilon:=\chi_\epsilon(S) S$ and $T_\epsilon:=\chi_\epsilon(T) T$,
it is immediate to check that $S_\epsilon,T_\epsilon,\operatorname{div}S_\epsilon,\operatorname{div}T_\epsilon$ converge to $S,T,\operatorname{div}S,\operatorname{div}T$ in $L^1(\R^2)$, as $\epsilon\to0$, so that
we reduce ourselves to show the bound for $S_\epsilon,T_\epsilon$. Taking now $\epsilon>0$ to be a regular value for both $|S|$ and $|T|$, we define
$$\Omega_\epsilon:=\{|S|>\epsilon\},$$
which is a bounded smooth domain, and we make the following simple observation.

\begin{lemmaen}
The vector field $S/|S|$, defined on $\Omega_\epsilon$, can be extended to a smooth vector field $U_\epsilon:\R^2\setminus F_\epsilon\to\mathbb{S}^1$ such that we can write
$S_\epsilon=\eta_\epsilon U_\epsilon$, with $F_\epsilon,\eta_\epsilon,U_\epsilon$ satisfying all the requirements in the claim above.
\end{lemmaen}

\begin{proof}
Taking $R_\epsilon>0$ such that $\bar{\Omega_\epsilon}$ is included in an open disk $B_{R_\epsilon}(0)$, we let $\Omega_\epsilon':=\Omega_\epsilon\cup(\R^2\setminus\bar B_{R_\epsilon}(0))$ and initially define
$U_\epsilon$ on its closure, by letting
$U_\epsilon:=S/|S|$ on $\bar{\Omega_\epsilon}$,
while $U_\epsilon:=\de_x$ on $\R^2\setminus B_{R_\epsilon}(0)$.

We can then extend it to a vector field $U_\epsilon:\Omega_\epsilon''\to \mathbb{S}^1$ (smooth up to the boundary), for a connected $\Omega_\epsilon''\supseteq\Omega_\epsilon'$ (with smooth boundary): for instance, we can
find finitely many disjoint injective curves $\gamma_j\subset\R^2\setminus\Omega_\epsilon'$ (meeting $\de\Omega_\epsilon'$ transversely at the endpoints) such that $\Omega_\epsilon'\cup\bigcup_j\gamma_j$ becomes connected; each $\gamma_j$ admits a neighborhood $\Omega_j$
diffeomorphic to a $\delta_j$-neighborhood $\Delta_j$ of the unit segment $[0,1]\times\{0\}$,
through a diffeomorphism $\Psi$ which maps $\Omega_\epsilon\cap\Omega_j$ to $\{(x,y)\in\Delta_j\,:\,x\nin[0,1]\}$.
Viewing $\mathbb{S}^1$ as unit complex numbers, we can write $U_\epsilon\circ\Psi^{-1}$ as $e^{i\phi_j}$
on the latter set (as it has two simply connected components), extend $\phi_j$ smoothly on all of $\Delta_j$, and then define $U_\epsilon:=e^{i\phi_j}\circ\Psi$
on all of $\Omega_j$. It then suffices to replace $\Omega_\epsilon'\cup\bigcup_j\Omega_j$ with a
subset $\Omega_\epsilon''$ with smooth boundary including $\Omega_\epsilon'\cup\bigcup_j\gamma_j$.

Finally, since $\Omega_\epsilon''$ is connected, its complement is diffeomorphic to the union of finitely many disjoint closed disks.
Identifying each of them with the closed unit disk $\bar D$, we can extend $U_\epsilon$ from the boundary
$\de D$ to $\bar D\setminus\{z_1,\dots,z_{|k|}\}$
(for arbitrarily chosen distinct points $z_1,\dots,z_{|k|}\in D$),
where $k\in\Z$ is the degree of $U_\epsilon$ on $\de D$,
in such a way that the extension has degree $\pm1$ around $z_\ell$,
for each $\ell=1,\dots,|k|$
(namely, $1$ if $k>0$ and $-1$ if $k<0$); in fact,
we can choose $U_\epsilon$ to have the form $\pm\frac{i(z-z_\ell)}{|z-z_\ell|}$ near each $z_\ell$.
\end{proof}

We repeat the same for $T_\epsilon$, obtaining $T_\epsilon=\zeta_\epsilon V_\epsilon$ for a suitable
$V_\epsilon:\R^2\setminus F_\epsilon'\to\mathbb{S}^1$. In the sequel, we let $F:=F_\epsilon\cup F_\epsilon'$
and we simply write $S,T,U,V$ in place of $S_\epsilon,T_\epsilon,U_\epsilon,V_\epsilon$.

Up to a slight rotation in the values of $S$ (and $U$), we can also assume that $(0,\pm1)$ are regular values for $U$ on the line $\{j\}\times\R$, for each $j\in\Z$. Thus, up to adding finitely many points to $F$, we can assume that $U(x,y)\neq(0,\pm1)$ whenever $x\in\Z$ and $(x,y)\nin F$,
and similarly that $V(x,y)\neq(\pm1,0)$ whenever $y\in\Z$ and $(x,y)\nin F$.

Next, let us consider again the vertical and horizontal stripes
introduced previously, namely $\str_j^x=[j,j+1]\times\R$ and $\str_k^y=\R\times[k,k+1]$. By construction, up to a negligible set, each stripe $\str_j^x$ is a union of (pieces of) integral curves
of $U$ given by one of the following four kinds:
\begin{itemize}
    \item[(i)] curves joining the left side to the right side, in this order;
    \item[(ii)] curves joining the right side to the left one,
    or starting from one side and ending on the same one;
    \item[(iii)] injective curves of infinite length;
    \item[(iv)] loops;
\end{itemize}
also, each of these meets $\de\str_j^x$ transversely.

The union of curves of type (i) forms an open set, determined by
a set of good initial conditions $\{j\}\times\bigcup_\ell I_{j,\ell}$
on the left side, for disjoint open intervals $I_{j,\ell}$.
We let $\mathcal{R}_{j,\ell}^x\subseteq\str_j^x$ be the union of those (pieces of) integral curves with initial condition in $\{j\}\times I_{j,\ell}$,
which is a good region for $S$ according to \cref{good.region}
(see \cref{also.vanish}).


Repeating the same on horizontal stripes, finding regions
$\mathcal{R}_{k,m}^y\subseteq\str_k^y$ where integral curves of $V$ travel from the bottom side to the top one, we can define $G_{j,\ell}^x\subseteq\mathcal{R}_{j,\ell}^x$ and $G_{k,m}^y\subseteq\mathcal{R}_{k,m}^y$ as in \cref{g.def},
relative to these good regions.
Thus, by \cref{g.est} we have
$$\int_{G_{j,\ell}^x\cap G_{k,m}^y}\det(S_f,T_f)\le\int_{\str_{j,\ell}^x}(S^x+\lvert \dive S\rvert)\int_{\str_{k,m}^y}(T^y+\lvert \dive T\rvert).$$
Letting $G^x:=\bigcup_{j,\ell}G_{j,\ell}^x$, $G^y:=\bigcup_{k,m}G_{k,m}^y$, and $G:=G^x\cap G^y$, we deduce that
$$\int_G\det(S_f,T_f)\le\int_{\R^2}(S^x+\lvert \dive S\rvert)\int_{\R^2}(T^y+\lvert \dive T\rvert),$$
where $S_f,T_f$ are defined piecewise on each good region for $S$ and $T$, respectively,
as in \cref{s.f.def}. Note that $S_f^P T_f^P\le C\det(S_f,T_f)$ on $G$ by the elementary bound
\begin{align*}
	&(v^x-|v^y|)(w^y-|w^x|)
	\le C\det(v,w)
\end{align*}
for vectors $v\in\bar{C^x}$ and $w\in\bar{C^y}$ (note that $S_f\in\bar{C^x}$ on $G$, except possibly on the negligible set $\Z\times\R$, and similarly $T_f\in\bar{C^y}$). In turn, this bound follows from the fact that, assuming $|v|=|w|=1$, $\det(v,w)$ is comparable with $\min\{|v-w|,|v+w|\}$, while $v^x-|v^y|$ is comparable with $\operatorname{dist}(v,\de C^x)$, and similarly $w^y-|w^x|$ is comparable with $\operatorname{dist}(w,\de C^y)$.
From this remark and Cauchy--Schwarz it follows that
\begin{align*}
	&\int_{G}\chi\sqrt{S_f^P T_f^P}
	\le C\Big[\int_{\R^2}(S^x+\lvert \dive S\rvert)\Big]^{1/2}\Big[\int_{\R^2}(T^y+\lvert \dive T\rvert)\Big]^{1/2}.
\end{align*}

Recalling that $S_f,S_d$ are nonnegative multiples of $U$, we get $S^P=S_f^P+S_d^P$ and $S^N=S_f^N+S_d^N$, and hence
\begin{align*}
	&\min\{S^P,T^P\}
	\le\min\{S_f^P,T_f^P\}+S_d^P+T_d^P
	\le\sqrt{S_f^P T_f^P}+S_d^P+T_d^P.
\end{align*}
Letting
$$\mathcal{R}^x:=\bigcup_{j,\ell}\mathcal{R}_{j,\ell}^x,\quad \mathcal{R}^y:=\bigcup_{k,m}\mathcal{R}_{k,m}^y,$$
and $\mathcal{R}:=\mathcal{R}^x\cap\mathcal{R}^y$,
we deduce that
\begin{align*}
	\int_{\mathcal{R}} \chi\min\{S^P,T^P\}
	&\le\int_{G}\chi\min\{S_f^P,T_f^P\}
	+\int_{\mathcal{R}\setminus G}\min\{S_f^P,T_f^P\}
	+\int_{\mathcal{R}}(S_d^P+T_d^P) \\
	&\le\int_{G}\chi\sqrt{S_f^P T_f^P}
	+\int_{\mathcal{R}^x\setminus G^x}S_f^P
	+\int_{\mathcal{R}^y\setminus G^y}T_f^P
	+\int_{\mathcal{R}}(S_d^P+T_d^P).
\end{align*}

Also, by \cref{complement} we have
$$\int_{\mathcal{R}^x\setminus G^x}S_f^P
\le \int_{\mathcal{R}^x}[S_f^N + \lvert\operatorname{div}S\rvert],$$
while \cref{complement.bis} gives
$$\int_{\mathcal{R}^x}S_d^P
\le \int_{\R^2}[C'S_d^N + C\lvert\operatorname{div}S\rvert],$$
and similarly for $T$. Once we sum these bounds, we get
\begin{align*}
    \int_{\mathcal{R}} \chi \min\{S^P,T^P\}
		&\le C\|\chi\|_{L^2}\Big[\int_{\R^2}(|S|+\lvert \dive S\rvert)\Big]^{1/2}\Big[\int_{\R^2}(|T|+\lvert \dive T\rvert)\Big]^{1/2} \\
		&\quad+C\int_{\mathcal{R}} (\lvert \dive S\rvert+\lvert \dive T\rvert)
        +C'\int_{\mathcal{R}}(S^N+T^N),
\end{align*}
where we can take $C'=1$ if $S^x,T^y\ge0$. Thus, we are left to bound the integral
$$\int_{\R^2\setminus\mathcal{R}}\min\{S^P,T^P\}
\le\int_{\R^2\setminus\mathcal{R}^x}S^P+\int_{\R^2\setminus\mathcal{R}^y}T^P.$$
We will just bound the last integral of $S^P$, as the one for $T^P$ is analogous.

Given $j\in\Z$, $\delta>0$, and a smooth $0\le\chi\le1$ vanishing precisely on $F$, we now let
$$\mathfrak{S}:=\uno_{\str_j^x\setminus\bigcup_\ell\mathcal{R}_{j,\ell}^x}(S+\delta\chi^2 U).$$
It is immediate to check that $\mathfrak{S}$ has summable divergence, so that
we can identify it with a normal $1$-current (still denoted by $\mathfrak{S}$), with
$$\mathbb{M}(\de\mathfrak{S})=\int_{\R^2}\lvert\operatorname{div}\mathfrak{S}\rvert.$$
By \cite[Theorem C]{Smirnov} and the Fleming--Rishel formula (see, e.g., \cite[p. 849]{Smirnov}), we can write
\begin{align}\label{smirnov.rep}
&\mathfrak{S}=\int_\Sigma\llbracket\gamma_\sigma\rrbracket\,d\lambda(\sigma)
+\int_\Xi\llbracket\gamma_\xi\rrbracket\,d\mu(\xi),
\end{align}
where $(\gamma_\sigma)_{\sigma\in\Sigma}$ is a family of (injective) arcs of finite length,
while $(\gamma_\xi)_{\xi\in\Xi}$ is a family of integral cycles, i.e., finite or countable unions
of loops of finite total length, with respect to nonnegative measures $\lambda,\mu$ on $\Sigma,\Xi$, respectively, such that
$$\mathbb{M}(\de\mathfrak{S};\Omega)=\int_\Sigma\mathbb{M}(\de\llbracket\gamma_\sigma\rrbracket;\Omega)\,d\lambda(\sigma)$$
for any open $\Omega\subseteq\R^2$. In particular, taking $\Omega$ to be the interior of $\str_j^x$, we have
$$\lambda(\tilde\Sigma)\le\int_{\Omega}\lvert\operatorname{div}(S+\delta\chi^2 U)\rvert,$$
where $\tilde\Sigma\subseteq\Sigma$ is the subset of indices $\sigma\in\Sigma$ such that
one of the endpoints of $\gamma_\sigma$ is in $\Omega$,
since we have $\mathbb{M}(\de\llbracket\gamma_\sigma\rrbracket;\Omega)\ge1$ for each
$\sigma\in\tilde\Sigma$ and $\lvert\operatorname{div}\mathfrak{S}\rvert\le\lvert\operatorname{div}(S+\delta\chi^2 U)\rvert$ on this $\Omega$.
Letting
$$H(\gamma):=\int_a^b[(\dot\gamma^x-|\dot\gamma^y|)^+ - C'(\dot\gamma^x-|\dot\gamma^y|)^-]\,dt$$
with $C'$ as in \cref{lemma.curve},
for each arc $\gamma:[a,b]\to\R^2$ (and similarly for a union of loops)
we claim that
$$H(\gamma_\sigma)\le \Lambda\uno_{\tilde\Sigma}(\sigma),\quad H(\gamma_\xi)\le0$$
for any $\sigma\in\Sigma$ and $\xi\in\Xi$, which immediately gives the bound
$$\int_{\str_j^x}(\mathfrak{S}^x-|\mathfrak{S}^y|)^+
\le C'\int_{\str_j^x}[(\mathfrak{S}^x-|\mathfrak{S}^y|)^- + \Lambda\lvert\operatorname{div}(S+\delta\chi^2U)\rvert],$$
and similarly for $T$;
as usual, we can take $C'=1$ if $S^x,T^y\ge0$.
In turn, this proves \cref{main.tw} once we let $\delta\to0$ and sum over $j$ and $k$.

To check the claim, we note that each $\gamma=\gamma_\sigma$ and $\gamma=\gamma_\xi$
is part of a curve of the types (ii)--(iv) (for a.e.\ $\sigma$ and a.e.\ $\xi$),
since \cref{smirnov.rep} holds without cancellation and thus $\dot\gamma(t)$ must be parallel to
$\mathfrak{S}(\gamma(t))$ (and thus to $U(\gamma(t))$ as $\delta>0$) for a.e.\ $t$.

The claim follows immediately for all $\xi\in\Xi$, since part (i) of \cref{lemma.curve}
can be applied to loops.
Moreover, if $\sigma\nin\tilde\Sigma$ then $\gamma_\sigma$ is a full curve of type (ii)--(iv), but
types (iii)--(iv) are ruled out since $\gamma_\sigma$ has finite length and is not a loop.
Hence, in this case $\gamma_\sigma$ is a full curve of type (ii), in which case
we can apply again part (i) of \cref{lemma.curve}.

To conclude, for an arc $\gamma_\sigma$ with $\sigma\in\tilde\Sigma$ the claim $H(\gamma_\sigma)\le\Lambda=\Lambda\uno_{\tilde\Sigma}(\sigma)$ follows immediately
from part (ii) of \cref{lemma.curve}.

\section{Proof of Michael--Simon for anisotropies close to the area}\label{sec.applications}

In this section we deduce \cref{main} from the nonlinear inequality stated in \cref{main.tw}.
Given a rectifiable $2$-varifold $V$ with finite total mass and first variation,
we identify it with a measure on $\R^3\times \mathbb{S}^2$. We can require that such measure is invariant under
$(x,\nu)\mapsto(x,-\nu)$ in order to have a unique identification, although this is not really necessary.
In the sequel, we let
$$\Pi:=\pi_{x,y}\circ\pi:\R^3\times \mathbb{S}^2\to\R^2,\quad\text{i.e.,}\quad \Pi((x,y,z),P):=(x,y).$$

Taking $\psi\in C^1_c(\R^2)$ (viewed as a function of $x,y$ and identified with
a map in $C^1(\R^3)$ by $(x,y,z)\mapsto\psi(x,y)$), we formally have
\begin{align}\label{delta.div}
	&\ang{\delta^F V,\psi\de_x}=\int_{\R^2} \de_x \psi\,d\A_x^x+\int_{\R^2}\de_y \psi\,d\A_x^y,
\end{align}
where, thanks to \cref{bf.hyper}, the measures $\A_x^x$ and $\A_x^y$ are given by
\begin{equation}\label{A}\A_x^x=\Pi_*[(F(\nu)-\nu^x\de_xF(\nu))V(\cdot,\nu)]
=\Pi_*[(\nu^y\de_yF(\nu)+\nu^z\de_zF(\nu))V(\cdot,\nu)]\end{equation}
(we used the fact that $F(\nu)=dF(\nu)[\nu]$ by $1$-homogeneity of $F$) and
\begin{equation}\label{B}\A_x^y=\Pi_*[-\nu^x\de_yF(\nu)V(\cdot,\nu)].\end{equation}
In fact, a straightforward cut-off argument (using the fact that $V$ has finite mass) shows that the right-hand side of \cref{delta.div} is bounded by $|\delta^F V|(\R^3)\|\psi\|_{C^0}$, even if we are using a vector field which is not compactly supported.

Similarly, we have
\begin{align*}
	&\ang{\delta^F V,\psi \de_y}=\int_{\R^2} \de_x \psi\,d\A_y^x+\int_{\R^2}\de_y \psi\,d\A_y^y,
\end{align*}
with
\begin{align}\label{CD}
\begin{aligned}
	\A_y^x&=\Pi_*[-\nu^y\de_xF(\nu)V(\cdot,\nu)], \\
	\A_y^y&=\Pi_*[(\nu^x\de_xF(\nu)+\nu^z\de_zF(\nu))V(\cdot,\nu)].
\end{aligned}
\end{align}
For the area we have $F(\nu)=|\nu|$, so that
\begin{align}\label{ABCD.area}
	&\begin{pmatrix}\A_x^x & \A_x^y \\ \A_y^x & \A_y^y\end{pmatrix}
	=\Pi_*\Bigg[\begin{pmatrix}(\nu^y)^2+(\nu^z)^2 & -\nu^x\nu^y \\ -\nu^x\nu^y & (\nu^x)^2+(\nu^z)^2\end{pmatrix}V(\cdot,\nu)\Bigg]
\end{align}
is symmetric and positive semidefinite. Note carefully that the same matrix $\A$ given by \cref{A}--\cref{CD} is not symmetric for a general $F$, nor it has
$\A_x^x,\A_y^y\ge0$, nor (formally) nonnegative determinant in general.

%
However, we now show that one can always reduce to the case where the diagonal entries are nonnegative,
assuming that $F$ is convex. In fact, the following holds in arbitrary dimension (when $k=n-1$), in the sense that $B_F(\nu)$ from \cref{bf.hyper}
can be assumed to have nonnegative diagonal entries, with the same proof.


\begin{proposition}\label{coord}
    Assume that $F$ is convex (which holds if $F$ satisfies (AC)).
	Given a varifold $W$, there exists a linear isomorphism $L\in \text{SL}(3)$, depending only on $F$, such that
	the matrix-valued measure $\A$, associated with $L_*F$ and the varifold $V:=L_*W$, satisfies
	\begin{align}
		&\A_x^x,\A_y^y\ge 0.
	\end{align}
	Also, up to composing $L$ with a permutation of the coordinates, we can assume that
	\begin{align}\label{one.third}
		&\int_{\R^3\times \mathbb{S}^2} (\nu^z)^2\,dV(p,\nu)\ge\frac{1}{3}|V|(\R^3),
	\end{align}	
	while trivially $|\delta^{L_*F}V|(\R^3)\le C(F)|\delta^F W|(\R^3)$.
\end{proposition}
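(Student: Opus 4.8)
The strategy is to recast the first assertion as a pointwise sign condition on $\A$ and to manufacture $L$ as the minimizer of a natural functional on $\text{SL}(n)$ (I work in general dimension with $k=n-1$; the case $n=3$ is the one we need). Since $F$ is convex, positive and $1$-homogeneous, it is the support function $h_W$ of a centrally symmetric convex body $W\subset\R^n$ (its Wulff shape), with $0$ in its interior; since $F\in C^1$, for each $\nu\neq 0$ the exposed point $q(\nu):=\nabla F(\nu)\in\partial W$ is well defined and $\langle\nu,q(\nu)\rangle=F(\nu)$ by Euler's identity. A direct computation gives $(L_*F)(w)=F(L^Tw)$ for $L\in\text{SL}(n)$ (with the Jacobian-corrected pushforward), so that $W_{L_*F}=L(W)$, and \cref{bf.hyper} together with Euler's identity yields
\[
(B_{L_*F}(\nu))_{ii}=(L_*F)(\nu)-\nu^i\partial_i(L_*F)(\nu)=\langle\nu,\,\tilde q(\nu)-\tilde q(\nu)^i e_i\rangle,
\]
where $\tilde q(\nu)\in\partial L(W)$ is the point of $L(W)$ exposed by $\nu$. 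As $\A_x^x=\Pi_*[(B_{L_*F}(\nu))_{xx}\,V(\cdot,\nu)]$ (and similarly for $\A_y^y$), testing with a single rescaled plane varifold shows that $\A_x^x,\A_y^y\ge 0$ for all $W$ if and only if $(B_{L_*F}(\nu))_{ii}\ge 0$ for all $\nu$ and $i\in\{x,y\}$; I will in fact arrange this for \emph{every} $i\in\{1,\dots,n\}$, which makes the property invariant under coordinate permutations. The elementary point is that $(B_{L_*F}(\nu))_{ii}\ge 0$ for all $\nu$ as soon as $M_ie_i\in L(W)$, where $M_i:=h_{L(W)}(e_i)=\max_{x\in L(W)}x_i$: indeed $\tilde q(\nu)^i e_i$ then lies on the segment $[-M_ie_i,M_ie_i]\subseteq L(W)$ (using central symmetry), so $\langle\nu,\tilde q(\nu)^i e_i\rangle\le\langle\nu,\tilde q(\nu)\rangle$ by maximality of $\tilde q(\nu)$, which is exactly the claimed inequality.

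To produce such an $L$, I would minimize
\[
\Phi(L):=\sum_{i=1}^n F(L^Te_i)^2=\sum_{i=1}^n h_{L(W)}(e_i)^2
\]
over $L\in\text{SL}(n)$. A minimizer exists because $\Phi$ is coercive: $F$ is a norm, so $\Phi(L)\gtrsim\max_i|L^Te_i|^2$, whence the sublevel sets of $\Phi$ in $\text{SL}(n)$ are bounded, hence compact. At a minimizer, the constrained Euler--Lagrange equation (perturbing $L\mapsto L+tH$ with $\operatorname{tr}(L^{-1}H)=0$, and using $\nabla_r[F(r)^2]=2F(r)\nabla F(r)$) reads
\[
2F(r_i)\,\nabla F(r_i)=\lambda\,c_i\qquad(i=1,\dots,n),
\]
where $r_i:=L^Te_i$ are the rows of $L$, $c_i$ are the columns of $L^{-1}$ (the basis dual to $\{r_i\}$), and $\lambda$ is a multiplier. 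Pairing with $r_i$ and using $\langle r_i,\nabla F(r_i)\rangle=F(r_i)$ and $\langle r_i,c_i\rangle=1$ gives $\lambda=2F(r_i)^2$ for all $i$; hence all widths $M_i=F(r_i)$ equal a common value $\mu$, and $\nabla F(r_i)=\mu c_i$. Applying $L$, the point of $L(W)$ exposed by $e_i$ equals $L(\nabla F(r_i))=\mu Lc_i=\mu e_i=M_ie_i$, so $M_ie_i\in L(W)$ for every $i$, as required. (If one wishes to avoid invoking differentiability of $F$ at the optimal directions $r_i$, one may first replace $W$ by $W+\epsilon B_1$, run the argument, and let $\epsilon\to 0$, since $M_ie_i\in W$ is a closed condition.)

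With this $L$ the first assertion holds. For \cref{one.third}, since $V$ lives on $\R^3\times\mathbb{S}^2$ we have $\sum_{i=1}^3\int(\nu^i)^2\,dV=\int|\nu|^2\,dV=|V|(\R^3)$, so $\int(\nu^{i_0})^2\,dV\ge\tfrac13|V|(\R^3)$ for some $i_0$; composing $L$ with a coordinate permutation sending $i_0$ to the third slot (corrected by a reflection, if needed, to keep the determinant equal to $1$ — harmless, since any $L\in\text{SL}(3)$ is allowed), and using that the condition ``$M_ie_i\in W_{L_*F}$ for all $i$'' is manifestly permutation-invariant, we retain $\A_x^x,\A_y^y\ge 0$ and gain $\int(\nu^z)^2\,dV\ge\tfrac13|V|(\R^3)$. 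Finally, $|\delta^{L_*F}(L_*W)|(\R^3)\le C(F)|\delta^F W|(\R^3)$ is immediate from $\langle\delta^{L_*F}(L_*W),X\rangle=\langle\delta^F W,\,L^{-1}(X\circ L)\rangle$ together with $\|L^{-1}(X\circ L)\|_{C^0}\le\|L^{-1}\|\,\|X\|_{C^0}$.

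I expect the only real work to be the variational step — writing the determinant-constrained Euler--Lagrange equation on $\text{SL}(n)$ cleanly and handling directions where $F$ might fail to be smooth — while the rest is bookkeeping. The conceptual takeaway is that minimizing $\sum_i F(L^Te_i)^2$ forces the Wulff shape of $L_*F$ to meet its coordinate bounding box precisely along the axes, and this ``box-inscribed'' position is exactly what makes the diagonal entries of $B_{L_*F}$, hence the measures $\A_x^x$ and $\A_y^y$, nonnegative.
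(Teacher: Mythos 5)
Your proof is correct, and it reaches the paper's goal by a genuinely different construction of $L$. The reduction to the pointwise condition $(B_{L_*F}(\nu))_{ii}\ge0$ is the same as in the paper, but you phrase the sufficient geometric condition on the Wulff shape ($M_ie_i\in L(W)$, i.e.\ $L(W)$ meets its coordinate bounding box on the axes), whereas the paper phrases the dual condition on the unit ball $K=\{L_*F\le1\}$ (the coordinate hyperplanes support $K$ at the points where the axes exit $K$); both say exactly that $G(e_i)\,G^*(e_i)=1$ for $G=L_*F$, so the two sufficiency arguments are polar-dual versions of one another. The real divergence is in how the good position is produced: the paper maximizes $\mathcal{L}^3(T(K))$ subject to $T(K)\subseteq[-1,1]^3$ and rules out $e_1\nin T(K)$ by a Hahn--Banach/volume-comparison argument (a John-position--type proof), while you minimize $\Phi(L)=\sum_iF(L^Te_i)^2$ over $\mathrm{SL}(n)$ and read the alignment off the Lagrange--multiplier equation $2F(r_i)\nabla F(r_i)=\lambda c_i$, which forces $F(r_i)\equiv\mu$ and $L\nabla F(r_i)=\mu e_i$. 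Your Euler--Lagrange computation is correct (pairing with $r_i$ gives $\lambda=2F(r_i)^2$, and $\nabla F(r_i)\in W$ is the exposed point, so $M_ie_i\in L(W)$), coercivity of $\Phi$ on the closed set $\mathrm{SL}(n)$ gives existence of a minimizer, and differentiability of $\Phi$ is free since $F\in C^1$ by hypothesis (so your $\epsilon$-smoothing fallback is unnecessary). What each approach buys: yours replaces the paper's geometric volume-comparison with a short first-order criticality computation, works verbatim in all dimensions, and when $F$ is the Euclidean norm the minimizers of $\Phi$ are exactly the rotations, so it also supports the paper's subsequent remark (that $L$ is close to a rotation when $F$ is close to the area) by the same compactness argument; the paper's volume maximization avoids any Euler--Lagrange/multiplier bookkeeping and only uses $C^0$ data of $F$ in the construction itself. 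The remaining steps (permutation, with a harmless reflection to stay in $\mathrm{SL}(3)$, using $\sum_i\int(\nu^i)^2\,dV=|V|(\R^3)$, and the change-of-variables identity for the first variation) agree with the paper.
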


In this statement, the integrand $L_*F$ is defined in such a way that, denoting by $L_*\mathcal{F}$ the corresponding anisotropic area,
the formula $L_*\mathcal{F}(L_*W)=\mathcal F(W)$ holds. Namely,
noting that if $\nu\perp P\in\text{Gr}_2(\R^3)$ then $L^T\nu\perp L^{-1}(P)$
and $|L^T\nu|$ is precisely $J_{L^{-1}}(P)$ (the Jacobian of $L^{-1}$ along $P$),
we let
\begin{align*}
	&L_*F(\nu):=|L^T\nu|F\Big(\frac{L^T\nu}{|L^T\nu|}\Big)=F(L^T\nu)
\end{align*}
for every $\nu\in \mathbb{S}^2$. Using the definition of first variation, it is easy to check that
\begin{align*}
	&\ang{\delta^{L_*F}(V),X}=\ang{\delta^F(W),Y},\quad Y(p):=L^{-1}X(Lp),
\end{align*}
for any vector field $X\in C^1_c(\R^3,\R^3)$, which implies the last part of the previous statement.

\begin{proof}
	%
	The desired condition $\A_x^x,\A_y^y\ge 0$ holds for $G=L_*F$ provided that,
    using indices in $\{1,2,3\}$ (rather than $\{x,y,z\}$), we have
	\begin{align}\label{geo}
		&dG(\nu)[\nu-\nu^1 e_1]\ge 0,\quad dG(\nu)[\nu-\nu^2 e_2]\ge 0
	\end{align}
	for all $\nu=(\nu^1,\nu^2,\nu^3)\in\R^3$. We claim that the last property holds if the convex set
	\begin{align*}
		&K:=\{P\,:\,G(P)\le 1\}
	\end{align*}
	has the property that $e_1^\perp$ is a supporting hyperplane at the point $p_1:=K\cap\R^+e_1$, and the same holds for $e_2$.
	
	To check this, note that $G=1$ on $\de K$. Hence, assuming the last geometric condition, the gradient $\nabla G(p_1)$ is parallel to $e_1$. Thus, the first inequality in \cref{geo} is actually an equality at $p_1$, and also at $-p_1$ by symmetry. Given any $\nu\in\de K\setminus\{\pm p_1\}$,
	let us write $\nu=a p_1+w$ with $w\perp e_1$. Again, $\nabla G(\nu)^\perp$ is a supporting hyperplane for $K$ at $\nu$, meaning that
	\begin{align*}
		&\ang{\nabla G(\nu),\nu-p}\ge 0\quad\text{for all }p\in K.
	\end{align*}
	By the geometric condition we have $|a|\le 1$ (since $K$ lies between the two affine planes $-p_1+e_1^\perp$ and $p_1+e_1^\perp$). Hence, we can take $p:=ap_1\in K$ and deduce that
	\begin{align*}
		&\ang{\nabla G(\nu),\nu-\nu^1e_1}
		=\ang{\nabla G(\nu),w}
		=\ang{\nabla G(\nu),\nu-p}
		\ge 0.
	\end{align*}
	By 1-homogeneity, the same is true for all $\nu\in\R^3$. The same holds for $e_2$, giving \cref{geo}.
	
	To conclude, we just need to find a linear (orientation-preserving) transformation $T$ such that the image of $K$ has the desired geometric property (which in fact will hold
    for all $e_i$). Once this is done, up to a permutation of the coordinates we can also guarantee \cref{one.third}, since $(\nu^x)^2+(\nu^y)^2+(\nu^z)^2=1$.
	
	In order to find $T$, we maximize the volume $\mathcal{L}^3(T(K))$ over all $T\in GL^+(\R^3)$ such that $T(K)$ is a subset of $Q:=[-1,1]^3$. This is equivalent to maximize $\det(T)$ under the same constraint, and it is easy to check that the maximum is indeed achieved. Letting $K':=T(K)$, we claim that $K'$ contains $\pm e_i$ for $i=1,2,3$, which implies (together with $K'\subseteq Q$) that $K'$ has the desired property. Indeed, if for instance we have $e_1\nin K'$, then by Hahn--Banach we can find a linear functional $\lambda$ such that $\lambda(e_1)>1$ and $\lambda(p)<1$ for all $p\in K'$. By symmetry, we have
	\begin{align*}
		&K'\subseteq\{|\lambda|\le 1\}\cap\{|e_2^*|\le 1\}\cap\{|e_3^*|\le 1\}=:Q'.
	\end{align*}
	However, $Q'$ is strictly contained in the set where the first constraint is replaced by $\{|\lambda|\le\lambda(e_1)\}$, which has the same volume as $Q$ (since its intersection with any line parallel to $e_1$ has the same length as the intersection with $Q$). Hence, taking a linear map $S\in GL^+(\R^3)$ such that $S(Q')=Q$, we must have $\det(S)>1$ and $ST(K)\subseteq S(Q')=Q$, contradicting the maximality of $\det(T)$ (the map $S$ is found by requiring that $\lambda S^{-1}=e_1^*$, $e_2^*S=e_2^*$, and $e_3^*S=e_3^*$, i.e., it is the matrix with rows $\lambda,e_2^*,e_3^*$).
\end{proof}

We assume that the linear change of coordinates given by \cref{coord} has already been applied, but we keep denoting by $F$ the transformed integrand.

\begin{rmk}
	Note that, if $F$ is close to the area (i.e., $\|F|_{\mathbb{S}^2}-1\|_{C^1}$ is small), then $L_*F$ is still close to it.
	Indeed, it is enough to check that the map $T$ from the proof of \cref{coord} is close to a rotation.
	The map $T$ was obtained by maximizing the volume of $T(K)$ (under the constraint that $T(K)\subseteq[-1,1]^3$), where $K:=\{|F|\le 1\}$.
	If $K$ is the unit ball, then the constraint $\pm e_i\in T(K)$ obtained along the proof forces $T$ to be a rotation. Hence, $T$ must be close to a rotation by a straightforward compactness argument.
\end{rmk}

The vector fields
\begin{align*}
	&S:=\A_x^x\de_x+\A_x^y\de_y,\quad T:=\A_y^x\de_x+\A_y^y\de_y
\end{align*}
are measures with total mass bounded by $C(F)|V|(\R^3)$ and have $S^x,T^y\ge 0$.
Since $\delta^F V$ has finite total variation, by \cref{delta.div} the divergence of $S$ is a measure with
\begin{align*}
	&\lvert \dive S\rvert(\R^2)\le|\delta^F V|(\R^3),
\end{align*}
and the same holds for $T$.

Since the statement of \cref{main} is scale-invariant, up to a dilation we can assume that $\mathcal{H}^2(\{\theta>0\})=\eta_0$, for a fixed small constant $\eta_0>0$ to be chosen later.

Before applying \cref{main.tw}, let us first show two elementary inequalities for real numbers.

\begin{lemmaen}
	Given $a,b,c\in\R$, we have
	\begin{align*}
		&\min\{(b^2+c^2-ab)^+,(a^2+c^2-ab)^+\}
		\ge c^2-\frac{a^2+b^2}{4}
	\end{align*}
	and there exists $\gamma>4$ (independent of $a,b,c$) such that
	\begin{align*}
		&(b^2+c^2-ab)^-+(a^2+c^2-ab)^-
		\le\frac{a^2+b^2}{\gamma}.
	\end{align*}
\end{lemmaen}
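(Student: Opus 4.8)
The plan is to prove both estimates by elementary means, reducing each to the nonnegativity of an explicit quadratic after a short case split; no delicate analysis is involved.

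For the first inequality I would distinguish two cases. If $c^2 \le \tfrac{a^2+b^2}{4}$, the right-hand side is nonpositive while the left-hand side, being a minimum of two nonnegative numbers, is nonnegative, so there is nothing to prove. If instead $c^2 > \tfrac{a^2+b^2}{4}$, I claim that both arguments of the minimum are already $\ge c^2 - \tfrac{a^2+b^2}{4}$ even before taking positive parts. By the symmetry $a \leftrightarrow b$ it suffices to check $b^2 - ab \ge -\tfrac{a^2+b^2}{4}$, i.e.\ $b^2 - ab + \tfrac{a^2+b^2}{4} \ge 0$, and this follows from the identity $b^2 - ab + \tfrac{a^2+b^2}{4} = \tfrac14(a-2b)^2 + \tfrac14 b^2$. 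Since in this case the common lower bound $c^2 - \tfrac{a^2+b^2}{4}$ is strictly positive, the two positive parts coincide with the expressions themselves, and taking the minimum yields the claim.

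For the second inequality I would first use $x^- = (-x)^+$ together with $-c^2 \le 0$ to bound $(b^2+c^2-ab)^- \le (ab-b^2)^+$ and $(a^2+c^2-ab)^- \le (ab-a^2)^+$; note that the resulting upper bound is independent of $c$. If $ab \le 0$ both terms on the right vanish. If $ab > 0$, the quantity $(ab-b^2)^+ + (ab-a^2)^+$ is unchanged under $(a,b) \mapsto (-a,-b)$ and under $a \leftrightarrow b$, so I may assume $a \ge b > 0$; then $(ab-a^2)^+ = 0$ and the sum equals $ab - b^2$. It remains to pick $\gamma > 4$ with $\gamma(ab-b^2) \le a^2+b^2$, i.e.\ $a^2 - \gamma ab + (\gamma+1)b^2 \ge 0$; regarding the left-hand side as a quadratic in $a$, this holds for all $a,b$ exactly when the discriminant $\gamma^2 - 4\gamma - 4$ is $\le 0$, which is satisfied by any $\gamma \in (4, 2+2\sqrt2\,]$ (e.g.\ $\gamma = 9/2$).

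I do not anticipate a real obstacle: both are finite-dimensional inequalities that collapse to nonnegative quadratics. The only point requiring a little care is in the second part — one must notice that, after discarding $+c^2$, the remaining quantity is symmetric under the relevant sign change and coordinate swap, so the reduction to the range $a \ge b > 0$ is legitimate — and one should choose $\gamma$ strictly between $4$ and $2+2\sqrt2$ so that the strict bound $\gamma > 4$ demanded in the statement holds.
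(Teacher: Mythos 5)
Your proof is correct, and for the second inequality it takes a genuinely different route from the paper. The first inequality is handled essentially as in the paper: your identity $b^2-ab+\tfrac{a^2+b^2}{4}=\tfrac14(a-2b)^2+\tfrac14 b^2$ is the same completion of the square the paper uses (namely $b^2-ab+\tfrac{a^2}{4}=(b-\tfrac a2)^2\ge0$), and your case split on the sign of $c^2-\tfrac{a^2+b^2}{4}$ is harmless, though not even needed since $\min\{x^+,y^+\}\ge\min\{x,y\}$ always. For the second inequality, you discard $c^2$ (which only decreases the negative parts), reduce by the symmetries $(a,b)\mapsto(-a,-b)$ and $a\leftrightarrow b$ to $a\ge b>0$, where the sum collapses to $ab-b^2$, and a discriminant computation shows $a^2-\gamma ab+(\gamma+1)b^2\ge0$ for every $\gamma\le 2+2\sqrt2$, so any $\gamma\in(4,\,2+2\sqrt2\,]$, e.g.\ $\gamma=9/2$, works. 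The paper instead normalizes $a^2+b^2+c^2=1$, splits on whether $|c|\ge\tfrac12$, bounds the sum by $\tfrac{a^2+b^2}{4}$, and then shows equality can never occur, which produces some $\gamma>4$ only via an implicit compactness argument. Your argument is therefore more elementary and constructive: it gives an explicit admissible $\gamma$ and in fact identifies the sharp constant $2+2\sqrt2$ (attained at $c=0$, $a=(1+\sqrt2)b$), whereas the paper's proof is shorter to state but yields no explicit value of $\gamma$; since the lemma is only used qualitatively (any $\gamma>4$ suffices), both versions serve the application equally well.
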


\begin{proof}
    We can assume that $a^2+b^2+c^2=1$.
	The inequality $b^2+c^2-ab\ge c^2-\frac{a^2}{4}$ gives immediately the first claim.
	Also, it shows that if $|c|\ge\mz$ then $b^2+c^2-ab\ge\frac{1-a^2}{4}\ge 0$, and similarly $a^2+c^2-ab\ge 0$, so that the second conclusion is trivial in this case.
	
	Assuming $|c|\le\mz$, from the same inequality we get $(b^2+c^2-ab)^-\le\frac{a^2}{4}$.
	Similarly we have $(a^2+c^2-ab)^-\le\frac{b^2}{4}$, and we deduce that
	\begin{align}\label{alg.last}
		&(b^2+c^2-ab)^-+(a^2+c^2-ab)^-
		\le\frac{a^2+b^2}{4}.
	\end{align}
	We claim that equality can never happen, which implies the second conclusion.
	Indeed, in order to have equality in \cref{alg.last} we must have $(c^2-\frac{a^2}{4})^-=\frac{a^2}{4}$ (hence, either $c=0$ or $a=0$)
	and $(c^2-\frac{b^2}{4})^-=\frac{b^2}{4}$ (hence, either $c=0$ or $b=0$); thus, we must have $c=0$ or $a=b=0$.
	Since we are assuming $a^2+b^2+c^2=1$ and $|c|\le\mz$, this forces $c=0$, as well as equality in $b^2-ab\ge -\frac{a^2}{4}$ and $a^2-ab\ge -\frac{b^2}{4}$ (unless $a$ or $b$ vanish).
	Hence, $b=\frac{a}{2}$ and $a=\frac{b}{2}$, or equivalently $a=b=0$ (we reach the same conclusion if $a=0$ or $b=0$). This however contradicts the assumption $a^2+b^2+c^2=1$.
\end{proof}

Since the assignment $(x,y)\mapsto (x-|y|)^-$ is subadditive, from the previous lemma (applied with $a:=\nu^x$, $b:=\nu^y$, and $c:=\nu^z$) and \cref{ABCD.area} we deduce that
\begin{align*}
	&S^N+T^N
	=(\A_x^x-|\A_x^y|)^-+(\A_y^y-|\A_y^x|)^-
	\le\Pi_*\Big[\frac{(\nu^x)^2+(\nu^y)^2}{\gamma}\,V(\cdot,\nu)\Big]
\end{align*}
when $F$ is the area (i.e., $F|_{\mathbb{S}^2}=1$), which implies that
\begin{align*}
	&S^N+T^N
	\le\Pi_*\Big[\Big(\frac{(\nu^x)^2+(\nu^y)^2}{\gamma}+\epsilon\Big)\,V(\cdot,\nu)\Big]
\end{align*}
for an arbitrarily small $\epsilon>0$, if $F|_{\mathbb{S}^2}$ is close enough to the area in the $C^1$ topology.

Also, the previous lemma implies that
\begin{align*}
	&\min\{S^P,T^P\}
	\ge\Pi_*\Big[\Big((\nu^z)^2-\frac{(\nu^x)^2+(\nu^y)^2}{4}-\epsilon\Big)\,V(\cdot,\nu)\Big]
\end{align*}
for $F$ close to the area.

Finally, we set $E:=\pi_{x,y}(\{\theta>0\})$, which is an analytic set;
as such, we can find a Borel set $E'\supseteq E$ such that $\mathcal{L}^2(E'\setminus E)=0$.
Observing that $S,T$ are concentrated on $E'$ and applying \cref{main.tw} with $\chi:=\uno_{E'}$, we get
\begin{align*}
	&\int_{\R^3\times\text{Gr}_2(\R^3)}\Big((\nu^z)^2-\frac{(\nu^x)^2+(\nu^y)^2}{4}-\epsilon\Big)\,dV(p,\nu) \\
	&\le\int_{E}\min\{S^P,T^P\} \\
	&\le C\mathcal{L}^2(E)^{1/2}\int_{\R^2}(|S|+|T|+\lvert \dive S\rvert+\lvert \dive T\rvert) \\
	&\quad+\int_{\R^2}(C\lvert \dive S\rvert+C\lvert \dive T\rvert+S^N+T^N) \\
	&\le C\mathcal{L}^2(E)^{1/2}|V|(\R^3)+C(\mathcal{L}^2(E)^{1/2}+1)|\delta^F V|(\R^3) \\
	&\quad+\int_{\R^3\times\text{Gr}_2(\R^3)}\Big(\frac{(\nu^x)^2+(\nu^y)^2}{\gamma}+\epsilon\Big)\,dV(p,\nu).
\end{align*}
Recalling that $(\nu^x)^2+(\nu^y)^2=1-(\nu^z)^2$ and \cref{one.third}, the fact that $\gamma>4$ implies
that $(\nu^z)^2-(\frac14+\frac1\gamma)(1-(\nu^z)^2)$ is at least
$\frac13-(\frac14+\frac1\gamma)\frac23=:4c>0$ in average, and hence
\begin{align*}
	&2c|V|(\R^3)
	\le C\mathcal{L}^2(E)^{1/2}|V|(\R^3)+(C\mathcal{L}^2(E)^{1/2}+1)|\delta^F V|(\R^3),
\end{align*}
provided that $\epsilon<c$. However, since $\mathcal{L}^2(E)\le\hau^2(\{\theta>0\})=\eta_0$,
we can now choose $\eta_0$ such that $C\sqrt{\eta_0}\le c$ and deduce that $|V|(\R^3)\le C|\delta^F V|(\R^3)$
(for a possibly different $C$). This completes the proof of \cref{main}.

\section{Proof of \cref{main.tris}}
First of all, note that we have
\begin{align}\label{symm.planes}
    &\nu^x\de_xF(\nu)\ge0,\quad\nu^y\de_yF(\nu)\ge0,\quad\nu^z\de_zF(\nu)\ge0,
\end{align}
since $F$ is convex and symmetric with respect to the coordinate planes.
Moreover, the last assumption on $F$ is easily checked to be equivalent to
\begin{equation}\label{rot.mono}
    (\nu^i\tilde\nu^j-\nu^j\tilde\nu^j)(\de_iF(\nu)\de_jF(\tilde\nu)-\de_jF(\nu)\de_iF(\tilde\nu))\ge0
\end{equation}
for every pair of indices $\{i,j\}\subset\{1,2,3\}$.

Given a rectifiable varifold $V$, since $\nu^x\de_xF(\nu)+\nu^y\de_yF(\nu)+\nu^z\de_zF(\nu)=F(\nu)$,
up to a permutation of the coordinates we can assume that
$$\int_{\R^3\times\mathbb{S}^2}\nu^z\de_zF(\nu)\,dV(p,\nu)\ge\frac{\mathcal{F}(V)}{3}\ge c|V|(\R^3).$$

Taking $\mathcal{A}$ as in the proof of \cref{main} and letting
$\mathcal{A}_\epsilon:=\rho_\epsilon*\mathcal{A}$ be a regularization
by mollification, we clearly have $\mathcal{A}_\epsilon\in W^{1,2}(\R^2)$. Moreover, we claim that
$$(\mathcal{A}_\epsilon)_x^x\ge0,\quad(\mathcal{A}_\epsilon)_y^y\ge0,\quad\det(\mathcal{A}_\epsilon)\ge0.$$
Indeed, recall that
$$\mathcal{A}=\Pi_*[B_F'(\nu)V(\cdot,\nu)],$$
where
$$B_F'(\nu):=\begin{pmatrix}\nu^y\de_yF(\nu)+\nu^z\de_zF(\nu) & -\nu^x\de_yF(\nu)\\
	-\nu^y\de_xF(\nu) & \nu^x\de_xF(\nu)+\nu^z\de_zF(\nu)\end{pmatrix},$$
which gives immediately $\mathcal{A}_x^x,\mathcal{A}_y^y\ge0$.
In order to check that $\det(\mathcal{A}_\epsilon)\ge0$
holds at any given point $(x_0,y_0)$, we observe that
$$\mathcal{A}_\epsilon(x_0,y_0)
=\int_{\R^3\times\mathbb{S}^2}\rho_\epsilon(x_0-x,y_0-y)B_F'(\nu)\,dV(x,y,z,\nu).$$
Thus, calling $\Lambda$ the $\nu$-marginal of $\rho_\epsilon(x_0-x,y_0-y)V(x,y,z,\nu)$ (i.e., its pushforward
with respect to the projection $\R^3\times\mathbb{S}^2\to\mathbb{S}^2$), we have
\begin{align*}
    \det(\mathcal{A}_\epsilon(x_0,y_0))&=\iint_{(\mathbb{S}^2)^2}
    [(\nu^y\de_yF(\nu)+\nu^z\de_zF(\nu))(\tilde\nu^x\de_xF(\tilde\nu)+\tilde\nu^z\de_zF(\tilde\nu))\\
    &\quad-\nu^x\de_yF(\nu)\tilde\nu^y\de_xF(\tilde\nu)]\,d\Lambda(\nu)\,d\Lambda(\tilde\nu);
\end{align*}
after symmetrizing the integrand, namely summing the same expression after interchanging
the roles of $\nu,\tilde\nu$ (and dividing by $2$), using again the fact that $\nu^x\de_xF(\nu)+\nu^y\de_yF(\nu)+\nu^z\de_zF(\nu)=F(\nu)$ we obtain that it equals
\begin{align*}
&\frac12[F(\nu)\tilde\nu^z\de_zF(\tilde\nu)+F(\tilde\nu)\nu^z\de_zF(\nu)]+\frac12[\nu^y\de_yF(\nu)\tilde\nu^x\de_xF(\tilde\nu)+\tilde\nu^y\de_yF(\tilde\nu)\nu^x\de_xF(\nu)\\
&\quad-\nu^x\de_yF(\nu)\tilde\nu^y\de_xF(\tilde\nu)
-\tilde\nu^x\de_yF(\tilde\nu)\nu^y\de_xF(\nu)]\\
&\ge \frac12[F(\nu)\tilde\nu^z\de_zF(\tilde\nu)+F(\tilde\nu)\nu^z\de_zF(\nu)]\\
&\ge \nu^z\de_zF(\nu)\tilde\nu^z\de_zF(\tilde\nu),
\end{align*}
thanks to \cref{symm.planes} and \cref{rot.mono}.
In particular, by \cref{symm.planes} again, we see that
$\det(\mathcal{A}_\epsilon)\ge0$.

\cref{det.simple} now gives
$$\int_{\R^2}\det(\mathcal{A}_\epsilon)\le\frac{|\delta^FV|(\R^3)^2}{4},$$
since the divergence of each row of $\mathcal{A}_\epsilon$
is bounded in $L^1(\R^2)$ by $|\delta^FV|(\R^3)$
(as this holds for $\mathcal{A}$).
Hence, fixing $\chi:\R^2\to[0,1]$ continuous and applying Cauchy--Schwarz, we get
$$\int_{\R^2}\chi\sqrt{\det(\mathcal{A}_\epsilon)}
\le\|\chi\|_{L^2}|\delta^FV|(\R^3).$$

Next, given any measure $M$ with values in $\R^{2\times2}$,
writing $M=A|M|$ for some unit-valued Borel $A:\R^2\to\R^{2\times2}$ we let
$$\sqrt{\det(M)^+}:=\sqrt{\det(A)^+}|M|,$$
which is a well-defined nonnegative measure.
Since we have the tight convergence $|\mathcal{A}_\epsilon|\weakto|\mathcal{A}|$, by Reshetnyak's continuity principle we also have
$$\sqrt{\det(\mathcal{A}_\epsilon)^+}\weakto\sqrt{\det(\mathcal{A})^+}$$
as measures, since the assignment $A\mapsto\sqrt{\det(A)^+}$ is continuous and $1$-homogeneous on $\R^{2\times 2}$.
Hence, we can pass to the limit and deduce that
$$\int_{\R^2}\chi\,d\sqrt{\det(\mathcal{A})^+}
\le\|\chi\|_{L^2}|\delta^FV|(\R^3).$$
By approximation, this then also holds for any Borel $\chi:\R^2\to[0,1]$.
As in the proof of \cref{main}, we can let
$\chi$ be the indicator function of $E:=\pi_{x,y}(\{\theta>0\})$.
The same computation used above shows that
$$\int_{\R^2}\uno_E\,d\sqrt{\det(\mathcal{A})^+}
=\sqrt{\det(\mathcal{A})^+}(\R^3)
\ge\int_{\R^3\times\mathbb{S}^2}\nu^z\de_zF(\nu)\,dV(p,\nu).$$
Since the latter is $\ge c|V|(\R^3)$ by assumption,
we arrive at
$$c|V|(\R^3)\le\mathcal{H}^2(\{\theta>0\})^{1/2}|\delta^FV|(\R^3),$$
as desired.

\frenchspacing

\end{document}